\newcommand{\real}{\mathbb{R}}
\newcounter{lst}
\newtheorem{cex}{Counterexample}
\newtheorem{conj}{Conjecture}
\newtheorem{cond}{Condition}
\newtheorem{thm}{Theorem}
\newtheorem{prop}{Proposition}
\newtheorem{cor}{Corollary}
\newtheorem{lemma}{Lemma}
\newtheorem{definition}{Definition}
\newcommand*{\rom}[1]{\expandafter\@slowromancap\romannumeral #1@}
\newcommand{\norm}[1]{\left|\left|#1\right|\right|_2}
\newcommand{\parentheses}[1]{\left(#1\right)}
\newcommand{\tr}{\text{tr}}
\newcommand{\calN}{\mathcal{N}}
\newcommand{\argmin}{\text{argmin}}
\newcommand{\argmax}{\text{argmax}}
\newcommand{\bbP}{\mathbb{P}}
\newcommand{\calC}{\mathcal{C}}
\newcommand{\bbE}{\mathbb{E}}
\newcommand{\set}[1]{\left\{#1\right\}}
\newcommand{\calL}{\mathcal{L}}
\newcommand{\calS}{\mathcal{S}}
\newcommand{\calO}{\mathcal{O}}
\newcommand{\calH}{\mathcal{H}}
\newcommand{\calR}{\mathcal{R}}
\newcommand{\calP}{\mathcal{P}}
\newcommand{\bbN}{\mathbb{N}}
\newcommand{\bfu}{\mathbf{u}}
\newcommand{\bfv}{\mathbf{v}}
\newcommand{\bfw}{\mathbf{w}}
\newcommand{\bbS}{\mathbb{S}}
\newcommand{\bbH}{\mathbb{H}}
\begin{document}

\title{Testing Separability of High-Dimensional Covariance Matrices}
\author{Bongjung Sung} 
\author{Peter D. Hoff}
\affil{Department of Statistical Science, Duke University} 
\date{\today}

\maketitle 

\begin{abstract}
Due to their parsimony, separable covariance models have been popular in modeling matrix-variate data. However, the inference from such a model may be misleading if the population covariance matrix $\Sigma$ is actually non-separable, motivating the use of statistical tests of separability. The existing separability tests suffer mainly from two issues: 1) test statistics that are not well-defined in high-dimensional settings, 2) low power for small sample sizes and null distributions that depend on unknown parameters, preventing exact error rate control. To address these issues, we propose novel invariant tests using the core covariance matrix, a complementary notion to a separable covariance matrix. We show that testing separability of $\Sigma$ is equivalent to testing sphericity of its core component. With this insight, we construct test statistics that are well-defined in high-dimensional settings and have distributions that are invariant under the null hypothesis of separability, allowing for exact simulation of null distributions. We establish the asymptotic properties of some test statistics by proving the asymptotic spectral equivalence between the sample covariance matrix and its core in a $p/n\rightarrow\gamma\in(0,\infty)$ regime. The large power of our proposed tests relative to existing procedures is demonstrated numerically.
\end{abstract}

\smallskip
\noindent\textit{Keywords:} Core covariance matrix; eigenvalues; hypothesis testing; Kronecker-invariance; separable covariance expansion; separable covariance matrix 

\section{Introduction}\label{sec1:intro}
Many modern statistical applications involve matrix-valued data, say $\set{Y_1,\ldots,Y_n }\subseteq \real^{p_1\times p_2}$, e.g., fMRI data \cite{lindquist2008}, microarray data \cite{allen2012}, phonetic data \cite{pigoli2014}, and financial time series \cite{horvath2014}. In many applications, estimating the population covariance matrix may be useful for a variety of statistical procedures, e.g., regression or classification, and can also be used to reveal underlying features of the population. While the sample covariance matrix may provide an unbiased estimate of the population covariance, it may be statistically unstable in high-dimensional settings where $n<p=p_1p_2$, without structural assumptions on the population covariance matrix. Due to their parsimony, separable covariance models have been widely used to model matrix-variate data \cite{drton2021,soloveychik2016,drton2024,linton2022,masak2023b}. The model assumes that the population covariance matrix is of the form $\Sigma=\Sigma_2\otimes \Sigma_1$. Here $\Sigma_1$ and $\Sigma_2$ are covariance matrices of dimensions $p_1\times p_1$ and $p_2\times p_2$, respectively, and $\otimes$ denotes the Kronecker product. While an unstructured covariance model has $O(p_1^2p_2^2)$ parameters, the separable covariance model has only $O(p_1^2+p_2^2)$, resulting in covariance estimates that are statistically stable in high-dimensional settings. It is also interpretable via the separate row and column covariance matrices $\Sigma_1$ and $\Sigma_2$. 

However, the appropriateness of the separability assumption is often questionable. The assumption tends to oversimplify the covariance structure as $p$ grows; ignoring significant dependencies between variables. For instance, criticism has been raised regarding the assumption in the spatio-temporal domain \cite{gneiting2002,gneiting2007}. This concern suggests that the assumption be evaluated before using it for estimation. To state the problem formally, denote the set of $p\times p$ positive definite matrices by $\calS_p^+$, and let $\calS_{p_1,p_2}^+:=\set{\Sigma_2\otimes \Sigma_1:\Sigma_1\in\calS_{p_1}^+,\,\Sigma_2\in\calS_{p_2}^+}$ be the set of separable covariance matrices for given values of $p_1$ and $p_2$ such that $p=p_1p_2$. We are interested in testing   
\begin{align}\label{sec1.eq1}
    H_0: \Sigma\in\calS_{p_1,p_2}^+\quad \text{ versus } \quad H_1:\Sigma\notin\calS_{p_1,p_2}^+
\end{align}
for a population covariance matrix $\Sigma$. While several covariance models have been proposed to generalize a separable covariance model, e.g., separable covariance expansion model \cite{tsiligkaridis2013,tsiligkaridis2013b,puchkin2024}, the separable covariance model may be still preferred in practice due to its simplicity. Nevertheless, its appropriateness should be empirically evaluated, such as with a statistical test (\ref{sec1.eq1}).  

Existing tests of (\ref{sec1.eq1}) include the classical likelihood ratio test (LRT) assuming the normality of the data \cite{mitchell2006,lu2005}. The distribution of the log-likelihood ratio statistic is invariant under the null hypothesis, and its asymptotic null distribution may be obtained from Wilks' phenomenon, which induces a natural rejection rule. However, $n$ should be sufficiently large to observe this phenomenon, which may not hold in high-dimensional settings where $n<p$. \cite{manceur2013b} proposed a modified LRT by correcting the bias in the original LRT. Nevertheless, their algorithm for bias correction may have a slow convergence rate, even when $p_1$ and $p_2$ are of moderate size. Furthermore, since the sample covariance matrix $S$ is not strictly positive definite if $n<p$, the LRT is not well-defined in high-dimensional settings when $n<p$. While \cite{simpson2014} modified the LRT to accommodate such settings, they considered repeated measurements where the dimension may vary by the subject, and additionally imposed the linear exponent autoregressive correlation structure on the separable covariance. Hence, their test is not applicable to general settings.  

On the other hand, nonparametric tests have been derived for a more general covariance operator, e.g., functional data \cite{constantinou2017,aston2017} and spatio-temporal data \cite{ghorbani2021}. Many of these tests are constructed based on the discrepancy between a sample covariance operator and the estimated separable covariance operator. For example, for functional data in a separable Hilbert space, the separability of covariance operator implies the separability of the eigenfunctions. Since the separable eigenfunctions can form a basis representation for the data, \cite{aston2017} proposed a test statistic by projecting the discrepancy between sample covariance operator and the estimated separable covariance operator onto a few eigenfunctions that encode most of the variability of the data. While the test is valid even when $n<p$, the tests may have low power as demonstrated in Section \ref{sec5:illus}. Alternatively, \cite{yu2023} derived a test based on linear spectral statistics to assess the null hypothesis regarding a specific component of a Kronecker-structured covariance matrix. For $\Sigma=\Sigma_2\otimes \Sigma_1$, they  considered testing whether $\Sigma_1=\Sigma_{1,0}$ $(\text{resp. }\Sigma_2=\Sigma_{2,0})$ for given $\Sigma_{1,0}\in\calS_{p_1}^+$ $(\text{resp. } \Sigma_{2,0}\in\calS_{p_2}^+)$. Under the null hypothesis of separability, their test statistic has a distribution independent of the unspecified component. The asymptotic null distribution and power analysis have been provided for their proposed test. Nonetheless, in the context of general separability tests, the size and power of the test with finite samples depend on the choice of $\Sigma\in\calS_{p_1,p_2}^+$. Thus, the desired invariance does not hold under the null hypothesis of separability. Also, their results are valid when $p/n\rightarrow 0$ as $n\rightarrow\infty$. 

In summary, existing separability test statistics are limited by three issues; 1) the null hypothesis of separability is a composite null, and the null distribution may depend on unknown parameters, 2) the test may not be defined in high-dimensional settings, and 3) the test may have low power. To remedy these issues, we introduce a new notion of invariance--Kronecker-invariance. We say a statistic is Kronecker-invariant if its distribution does not depend on $K=k(\Sigma)$, which we define formally in Section \ref{sec3:test}. We emphasize that this distributional freeness with respect to $K$ is \emph{exact}, i.e., the freeness holds for any fixed value of $(n,p)$. Hence, the Kronecker-invariance induces the size and power of the associated test independent of the unknown value of $K$, enabling exact error rate control.

Consequently, we propose three novel Kronecker-invariant tests that are well-defined in high-dimensional settings and have large power compared to alternatives. We develop such tests by leveraging the core covariance matrix \cite{hoff2023a}, a complementary notion to the separable covariance matrix. \cite{hoff2023a} showed that every covariance matrix can be decomposed into a core component and a separable (Kronecker) component, referred to as a Kronecker-core decomposition (KCD). A covariance matrix is separable if and only if its core component, denoted by $C$, is $I_p$. This implies that testing the separability of $\Sigma$ can be reframed as testing the sphericity of its core covariance $C$. Namely, testing (\ref{sec1.eq1}) is equivalent to testing 
\begin{align}\label{sec1.eq2}
    H_0:C=I_p \text{ versus } H_1:C\neq I_p.
\end{align}

In the light of (\ref{sec1.eq2}), we propose three novel test statistics whose distributions are invariant to the unknown value of $K$. These are based on either the eigenvalues of the sample core covariance matrix, $\hat{C}$, or its separable covariance expansion. As we show, since the empirical spectral distribution of $\hat{C}$ depends on $\Sigma$ only through $C$, the distribution of a test statistic based on the eigenvalues of $\hat{C}$ satisfies the desired Kronecker-invariance. On the other hand, we also consider the test based on the separable covariance expansion of $\hat{C}$ by employing the rearrangement operator. The separable covariance expansion embeds $\calS_{p_1,p_2}^+$ as a special case of separability rank $1$ \cite{puchkin2024,masak2023a}, and the separability rank can be examined through the rearrangement operator (\cite{puchkin2024}, Definition 2.1). As with the spectral distribution of $\hat{C}$, that of the rearrangement of $\hat{C}$ has a distribution that is independent of the true separable component $K$. Therefore, a test based on singular values of the rearrangement of $\hat{C}$ also satisfies Kronecker-invariance. All these tests are well-defined even in a high-dimensional regime where $n<p$.

In this article, we incorporate both practical and theoretical aspects of the tests, as is common in several works in the testing literature \cite{wang2013,wang2021,li2016}. In many practical applications, researchers implement a test by comparison of the test statistic to null distributions derived from a particular choice of $\calP_0$. However, if knowledge of an appropriate $\calP_0$ is not available, one may prefer an asymptotic test. Such a test provides a closed form of asymptotic null distribution that is valid for a broad range of data-generating distributions. In this case, the empirical size and power are evaluated against this asymptotic null distribution. 

We consider both approaches in this article. From a practical perspective, by the Kronecker-invariance and (\ref{sec1.eq2}), the null distribution does not depend on the unknown separable covariance in a non-asymptotic sense, and thus may be obtained via Monte Carlo simulation for a range of sampling distributions $\calP_0$. From a theoretical perspective, with $k(\Sigma)=I_p$ by virtue of the Kronecker-invariance, we provide the asymptotics for some Kronecker-invariant test statistics proposed in this article, without specifying $\calP_0$. Specifically, we prove the asymptotic distributions under the null hypothesis and some local alternative regime, consistency, and the first-order limit.  

These results are achieved by establishing the asymptotic spectral equivalence between the sample covariance matrix and its core when $p_i/\sqrt{n}\rightarrow\gamma_i\in(0,\infty)$ as $n$ grows for $i=1,2$ under some regularity conditions. Note that this spectral equivalence is implied by the convergence of the Kronecker MLE. Generalizing the results of \cite{oliveira2026}, we prove the almost sure convergence of the Kronecker MLE when $C$ exhibits a partial-isotropy rank$-r$ structure for Gaussian distributions in Section \ref{sec4.2:spec.equiv}. Motivated by this result, we derive the desired asymptotics in Section \ref{sec4.3:asymp.null}--\ref{sec4.4:consist}.

The rest of the article is organized as follows. In Section \ref{sec2:prelim}, we introduce notation and review the KCD and separable covariance expansion. We discuss how a Kronecker-invariant statistic can be constructed using the core of the sample covariance matrices. In Section \ref{sec3:test}, we formally define the notion of Kronecker-invariance and propose three novel Kronecker-invariant tests based on either the eigenvalues of the sample core covariance matrix or its separable covariance expansion. These tests are motivated by simple characterizations of a separable covariance matrix in terms of the core covariance matrix and the separable expansion, inducing the desired invariance. The aforementioned theoretical guarantees related to the proposed tests are provided in Section \ref{sec4:theory}. All the proofs of the theoretical results in this article are deferred to Appendix. We provide simulated null distributions and demonstrate the large power of the proposed tests through Monte Carlo simulations in Section \ref{sec5:illus}. The article concludes with a discussion in Section \ref{sec6:disc}. 

\section{Preliminaries}\label{sec2:prelim}
\subsection{Notation}
We denote the eigenvalues of $\Sigma\in\calS_p^+$ by $\lambda_1(\Sigma)\geq \cdots \geq \lambda_p(\Sigma)>0$. For a general matrix $M\in\real^{u\times v}$, denote the singular values of $M$ by $\sigma_1(M)\geq \cdots\geq  \sigma_{\min\set{u,v}}(M)\geq 0$. Let $\calL_{p}^+$ be the set of all $p\times p$ lower-triangular matrices with positive diagonal entries, and $\calL_{p_1,p_2}^+=\set{L_2\otimes L_1:L_1\in\calL_{p_1}^+,L_2\in\calL_{p_2}^+}$. Likewise, we write $\calO_{p}$ and $GL_p$ to denote orthogonal and general linear groups of order $p$, respectively, and define $\calO_{p_1,p_2}$ and $GL_{p_1,p_2}$ similarly. For $M\in\real^{u\times v}$, let $||M||_F=\sqrt{\tr\parentheses{M^\top M}}$ and $||M||_2=\sup_{x:||x||_2=1} ||Mx||_2$. Note that $||M||_2\leq ||M||_F\leq\sqrt{\max\set{p,q}}||M||_2$. For matrices $B_1,\ldots,B_m$ of possibly different dimensions, we denote their direct sum by $\bigoplus_{i=1}^m B_i=\text{diag}(B_1,\ldots,B_m)$. For two nonnegative sequences $\set{a_n}_{n=1}^\infty$ and $\set{b_n}_{n=1}^\infty$, if both $a_n/b_n=O(1)$ and $b_n/a_n=O(1)$ hold, write $a_n\asymp b_n$. 

\subsection{Kronecker-Core Decomposition and Kronecker-Invariant Statistics}\label{sec2.2:KCD}
We review the Kronecker-core decomposition (KCD) introduced by \cite{hoff2023a} from which we derive a statistic whose distribution is invariant to bilinear transformations of the data, and thus does not depend on the separable (Kronecker) component of the population covariance matrix. We refer to this property of a statistic as \emph{Kronecker-invariance}. The KCD parameterizes $\calS_p^+$ in terms of the separable component $K$ and the core component $C$. That is, every $\Sigma\in\calS_p^+$ can be written as $\Sigma=K^{1/2}CK^{1/2,\top}$, where $K^{1/2}$ is any square root of $K\in\calS_{p_1,p_2}^+$. Note that if $C=I_p$, then $\Sigma=K$, suggesting that $\calS_{p_1,p_2}^+$ can be embedded into $\calS_p^+$ via KCD. Indeed, any element of $\calS_{p_1,p_2}^{+}$ is positive definite by the properties of Kronecker product \cite{neudecker1969,loan2000}. To see the embedding more clearly, we formally define the separable component and the core component of $\Sigma$. For given $\Sigma$, define a function $d:\calS_{p_1,p_2}^+\mapsto \real$ by 
\begin{align}\label{sec2.2.eq1}
    d(\Omega;\Sigma)=\tr\parentheses{\Omega^{-1}\Sigma}+\log|\Omega|.
\end{align}
Then the separable component of $\Sigma$, $K=\Sigma_2\otimes \Sigma_1$, is defined to be a minimizer of $d$ over $\Omega$, i.e., $K:=k(\Sigma)=\argmin_{\Omega=\Omega_2\otimes \Omega_1\in \calS_{p_1,p_2}^+}d(\Omega;\Sigma)$. Since the function $d$ is a Kullback-Leibler (KL) divergence between $N_{p_1\times p_2}(0,\Omega)$ and $N_{p_1\times p_2}(0,\Sigma)$, $K$ is equivalent to a Kronecker maximum likelihood estimator (MLE) based on $\Sigma$, representing the most separable component of $\Sigma$. Note that $K$ uniquely exists (see Theorem $3$ of \cite{ros2016}) and so the map $k:\calS_p^+\mapsto \calS_{p_1,p_2}^+$ is well-defined. We shall call the map $k$ as a Kronecker map.  

The core component of $\Sigma$ is then defined by whitening $\Sigma$ through the identifiable square root of its separable component $K$. Namely, suppose $\calH\subseteq GL_{p_1,p_2}$ such that the function $s:\calH\mapsto \calS_{p_1,p_2}^+$, defined by $s(H)=HH^\top$, is a bijection, e.g., $\calL_{p_1,p_2}^+$ and $\calS_{p_1,p_2}^+$. If $H\in\calH$ is a square root of $k(\Sigma)$, then the core component of $\Sigma$ is defined as  $c(\Sigma):=H^{-1}\Sigma H^{-\top}$. Let $\calC_{p_1,p_2}^+=\set{H^{-1}\Sigma H^{-\top}:k(\Sigma)=HH^\top,H\in\calH}$. Since $c(\Sigma)$ is also unique as the square root of $k(\Sigma)$ is so within $\calH$, the map $c:\calS_p^+\mapsto \calC_{p_1,p_2}^+$ is well-defined and referred to as a core map. Therefore, every $\Sigma$ has a unique and identifiable KCD as $K^{1/2}CK^{1/2,\top}$ for some $(K,C)\in\calS_{p_1,p_2}^+\times \calC_{p_1,p_2}^+$ with a suitable choice of $\calH$. In the rest of the article, we shall fix $\calH$ as either $\calS_{p_1,p_2}^+$ or $\calL_{p_1,p_2}^+$ for the identifiable and unique KCD. We summarize the definitions of the separable component and the core component below.  
\begin{definition}\label{sec2.2.def1}
    Suppose $k:\calS_{p}^+\mapsto \calS_{p_1,p_2}^+$ is a Kronecker map defined by  $k(\Sigma)=\argmin_{K\in\calS_{p_1,p_2}^+}d(K;\Sigma)$. Also, let $\calH\subseteq GL_{p_1,p_2} $ be fixed as either $\calS_{p_1,p_2}^{++}$ or $\calL_{p_1,p_2}^{++}$. Then the core map $c:\calS_p^+ \mapsto \calC_{p_1,p_2}^+$ is defined as $c(\Sigma)=H^{-1}\Sigma H^{-\top}$ for unique $H\in \calH$ satisfying $k(\Sigma)=HH^\top$. For $\Sigma \in \calS_p^+$, we refer to $k(\Sigma)$ and $c(\Sigma)$ as the separable (Kronecker) and core components of $\Sigma$, respectively.    
\end{definition}

As a remark, $\Sigma$ is separable if and only if $k(\Sigma)=\Sigma$, or equivalently, $c(\Sigma)=I_p$. Thus, $\calS_{p_1,p_2}^{+}$ is embedded in $\calS_p^+$ as $\calS_{p_1,p_2}^{+}\equiv c^{-1}(\set{I_p})$. Also, for any $G\in GL_{p_1,p_2}$, $k(G\Sigma G^\top)=Gk(\Sigma) G^\top$ (see Proposition $2$ of \cite{hoff2023a}). That is, the Kronecker map $k$ is equivariant with respect to $GL_{p_1,p_2}$. This equivariance implies that the separable component of the core is always $I_p$. Specifically, if $H\in\calH\subset GL_{p_1,p_2}$ is an identifiable square root of $K=k(\Sigma)$ so that the induced core is $C=c(\Sigma)=H^{-1}\Sigma H^{-\top}$, it holds that
\begin{align}\label{sec2.2.eq2}
k(C)=k(H^{-1}\Sigma H^{-\top})=H^{-1}KH^{-\top}=H^{-1}HH^\top H^{-\top}=I_p.   
\end{align}
Thus, this implies that $\calC_{p_1,p_2}^+$ can be equivalently defined as $\set{C\in\calS_p^+:k(C)=I_p}$. Also, note that the KCD may exist for a positive semidefinite matrix, possibly not strictly definite. For instance, if $S$ is a sample covariance matrix of $n$ random samples and $n> p_1/p_2+p_2/p_1$, the minimizer of $d(\Omega;S)$ in $\Omega$ uniquely exists \cite{drton2021,drton2024,derksen2021}, and hence the KCD of $S$ is uniquely defined, where the above equivariance for the separable component and the property of the core component as in (\ref{sec2.2.eq2}) still hold. Thus, we assume $n>p_1/p_2+p_2/p_1$ throughout the paper to ensure the existence of KCD for the sample covariance matrix.

By the definitions of $\calS_{p_1,p_2}^+$ and $\calC_{p_1,p_2}^+$, $\calC_{p_1,p_2}^+\cap \calS_p^+=\set{I_p}$. Thus, testing the null hypothesis of separability is equivalent to testing the null hypothesis that $C=I_p$. Since this null hypothesis is represented in terms of only $c(\Sigma)$ for the population covariance matrix $\Sigma$, this may motivate using $\hat{C}=c(S)$ to test the separability of $\Sigma$. 

\subsection{Separable Covariance Expansion}\label{sec2.3:sep.cov.exp}

Another parameterization of $\mathcal S_p^+$ is provided by the separable 
covariance expansion 
\cite{puchkin2024,loan2000,masak2023a,tsiligkaridis2013}. In this parameterization, the covariance matrix $\Sigma$ is expressed as a finite sum of linearly independent separable matrices, not necessarily positive definite. Namely, 
\begin{align}\label{sec2.3.eq1}
    \Sigma=\sum_{i=1}^R A_i\otimes B_i 
\end{align}
for $1\leq R\leq p_1^2\wedge p_2^2$, and linearly independent sequences of matrices $\set{A_i}_{i=1}^R\subseteq \real^{p_2\times p_2}$ and $\set{B_i}_{i=1}^R\subseteq \real^{p_1\times p_1}$. Note that any symmetric $\Sigma$, not necessarily positive definite, admits the expansion in (\ref{sec2.3.eq1}) \cite{puchkin2024,loan2000,masak2023a,tsiligkaridis2013}. On the matrix domain, \cite{loan2000} showed this is possible with an orthogonal expansion of Kronecker products. In general, (\ref{sec2.3.eq1}) holds due to the following facts (Section $3.1$ of \cite{masak2023a}). Suppose $\calS_2(\calH)$ is the set of Hilbert-Schmidt operators on a separable Hilbert space $\calH$. For any two separable Hilbert spaces $\calH_1$ and $\calH_2$, it holds that $\calS_2(\calH_1\otimes \calH_2)\cong \calS_2(\calH_1)\otimes \calS_2(\calH_2)$ under some isomorphism. Then (\ref{sec2.3.eq1}) follows by identifying the SVD of the operator $\mathcal{T}\in\calS_2(\calH_1\otimes \calH_2)$ in $ \calS_2(\calH_1)\otimes \calS_2(\calH_2)$. Note that $\Sigma\in\calS_{p_1,p_2}^+$ if and only if $R=1$. We refer to the value of $R$ as a separability rank of $\Sigma$. The separability rank $R$ can be examined through the rearrangement operator, defined as follows: 
\begin{definition}\label{sec2.3.def1}
    For $M\in\real^{p_1p_2\times p_1p_2}$, partition $M$ as 
    \begin{align}\label{sec2.3.def1.eq1}
        M=\left[\begin{array}{cccc}
         M_{[1,1]} & M_{[1,2]} & \cdots & M_{[1,p_2]} \\
              M_{[2,1]}   &M_{[2,2]} & \cdots &  M_{[2,p_2]}\\
            \vdots & \vdots & \ddots & \vdots \\
          M_{[p_2,1]}   &M_{[p_2,2]} & \cdots &  M_{[p_2,p_2]}
        \end{array}\right],
    \end{align}
    where $M_{[i,j]}\in\real^{p_1\times p_1}$ for $i,j=1,\ldots,p_2$. Then the rearrangement operator $\calR:\real^{p_1p_2\times p_1p_2}\mapsto \real^{p_2^2\times p_1^2}$ is defined by $R(M)\in \real^{p_2^2\times p_1^2}$ whose $((j-1)p_2+i)-$th row is $\text{vec}(M_{[i,j]})^\top$.  
\end{definition}
By Proposition 2.2 of  \cite{puchkin2024}, the linear map $\calR$ is a bijection. Also, it holds that 
\begin{align*}
    \calR\parentheses{\sum_{j=1}^R A_j\otimes B_j}=\sum_{j=1}^R \text{vec}(A_j)\text{vec}(B_j)^\top 
\end{align*}
for $A_j\in\real^{p_2\times p_2}$ and $B_j\in\real^{p_1\times p_1}$. The above implies that if two sequences $\set{A_j}_{j=1}^R$ and $\set{B_j}_{j=1}^R$ are linearly independent, the algebraic rank of $ \calR\parentheses{\sum_{j=1}^R A_j\otimes B_j}$ is $R$. Namely, $R=\text{rank}(\calR(\Sigma))$. Therefore, the test of separability (\ref{sec1.eq1}) can be reformulated as  
\begin{align}\label{sec2.3.eq2}
    H_0:\text{rank}(\calR(\Sigma))=1 \text{ versus } H_1:\text{rank}(\calR(\Sigma))>1. 
\end{align}
Recall that the algebraic rank of a generic matrix is equivalent to its number of nonzero singular values. Thus, if we simply write $\sigma_i:=\sigma_i(\calR(\Sigma))$ for $i=1,\ldots,p_1^2\wedge p_2^2$, one can observe that $\Sigma$ has a separability rank $1$ if and only if $\sigma_1>0,\sigma_2=\cdots=\sigma_{p_1^2\wedge p_2^2}=0$. Equivalently, $\Sigma$ is separable if and only if  
\begin{align}\label{sec2.3.eq3}
\sigma_1>0,\quad  \frac{\sigma_2^2+\cdots+\sigma_{p_1^2\wedge p_2^2}^2}{\sigma_1^2}=0.
\end{align}

\section{Kronecker-Invariant Tests of Separability}\label{sec3:test}
In this section, we introduce three tests of separability based on test statistics whose distributions under the null hypothesis do not depend on any unknown parameters. To achieve this, we show in Proposition \ref{sec3.prop1}, that under mild conditions, the empirical spectral distribution (ESD) of the sample core does not depend on any unknown parameters under the null hypothesis. Here the ESD of a given $p\times p$ positive semidefinite matrix $\Omega$, denoted $F_{\Omega}$, is defined as 
\begin{align*}
    F_{\Omega}(x):=\frac{1}{p}\sum_{i=1}^p \mathbf{1}_{\lambda_i(\Omega)}(x).
\end{align*}
Therefore, any test statistic based on the eigenvalues of the sample core satisfies the desired property. Such a test statistic is desirable as it permits the size of the test to be set exactly and non-asymptotically. In contrast, a test based on a statistic lacking this property will either only have approximate error rate control or have very low power, as a result of maintaining error control over all distributions in the composite null hypothesis. To the best of our knowledge, the only existing separability test whose test statistic satisfies such a property is the LRT, which is not well-defined if $n<p$.

To construct a test statistic with the desired property, we first introduce the following model for the observed data $Y_1,\ldots,Y_n\in\real^{p_1\times p_2}$: letting $K=k(\Sigma)$ and $C=c(\Sigma)$ for population covariance matrix $\Sigma$,
\begin{align}\label{sec3.eq1}
    y_i:=\text{vec}(Y_i)\overset{d}{\equiv} K^{1/2}(UDV^\top)z_i,
\end{align}
where $z_1,\ldots,z_n\overset{i.i.d.}{\sim}\calP_0$ for some known distribution $\calP_0$ satisfying $\bbE[z_1]=0$ and $V[z_1]=I_p$. Note that we assume the zero mean following a standard approach in testing literature \cite{ding2020,wang2013,wang2013} and random matrix theories \cite{bao2015,chen2012,lee2016}. Here $K^{1/2}$ is either symmetric square root ($\calS_{p_1,p_2}^+$) or Cholesky factor ($\calL_{p_1,p_2}^+$) for unique and identifiable KCD of $\Sigma$ as in Section \ref{sec2.2:KCD}. Also, $U,V\in\calO_p$ and $D$ is a diagonal matrix with positive diagonal entries such that $C\equiv UD^2U^\top$. Hence, $UDV^\top$ represents a square root of $C$. Note that any square root $\Sigma^{1/2}$ of $\Sigma$ is written as $K^{1/2}(UDV^\top)$ for some $V\in\calO_p$. 

The model (\ref{sec3.eq1}) is assumed for the results in the remainder of this article. Note that we consider the known distribution $\calP_0$ to account for practical aspects of the test as discussed in Section \ref{sec1:intro}. Nevertheless, we will also incorporate theoretical aspects by establishing the asymptotics, including the asymptotic distribution, consistency, and the first-order limit, in Section \ref{sec4.3:asymp.null}--\ref{sec4.4:consist} for some of the Kronecker-invariant statistics proposed in this article, without specifying $\calP_0$.

Now recall that the null parameter space of (\ref{sec1.eq1}) is the set of separable, or Kronecker-structured, covariance matrices. We shall refer to statistics with constant distributions across this null model as Kronecker-invariant statistics:
\begin{definition}\label{def3}
For given $\Sigma\in\calS_p^+$, suppose random matrices $Y_1,\ldots,Y_n\in \real^{p_1\times p_2}$ are generated according to the model (\ref{sec3.eq1}). Let $T(Y)$ be a statistic based on $Y_1,\ldots,Y_n$. Parameterizing the distribution of $T(Y)$ by $K:=k(\Sigma)$ and $C:=c(\Sigma$), we say $T(Y)$ is Kronecker-invariant if for any Borel-measurable $A\subseteq \real$ and $K'\in\calS_{p_1,p_2}^+$,
    \begin{align*}
        \bbP(T(Y)\in A|K,C)=\bbP(T(Y)\in A|K',C).
    \end{align*}
 \end{definition}
 
Indeed, the following proposition implies that under mild assumptions, the spectral distribution of $\hat{C}$ does not depend on $k(\Sigma)$ regardless of the choice of $\calH$. Therefore, any test statistic based on the eigenvalues of $\hat{C}$ satisfies the Kronecker-invariance. 

\begin{prop}\label{sec3.prop1}
For a given $\Sigma\in\calS_p^+$, suppose random matrices $Y_1,\ldots,Y_n\in \real^{p_1\times p_2}$ are generated according to the model (\ref{sec3.eq1}). Let $S$ be the sample covariance matrix $S$ of $Y_1,\ldots,Y_n$, i.e., $S=1/n\sum_{i=1}^ny_iy_i^\top$, and write $\hat{C}=c(S)$, where the square root function of $\hat{K}=k(S)$ that defines $\hat{C}$ may differ from that of $K=k(\Sigma)$ defining $C=c(\Sigma)$. Assume one of the following conditions:
\begin{itemize}
    \item[](\rom{1}) The choice of $K^{1/2}$ and $UDV^\top$ in the model (\ref{sec3.eq1}) is fixed. 
    \item[](\rom{2}) The distribution of $z_1$ is orthogonally invariant, i.e., $z_1\overset{d}{\equiv} Oz_1$ for any $O\in\calO_p$.  
\end{itemize}
Then the empirical spectral distribution of $\hat{C}$, $F_{\hat{C}}$, does not depend on $K$ if either condition (\rom{1}) or (\rom{2}) is met. 
\end{prop}

As a remark, we emphasize that the above distributional freeness with respect to $K$ is \emph{exact}; that is, the freeness holds non-asymptotically. Also, from the proof of Proposition \ref{sec3.prop1}, one can deduce that the equivariance of the separable component with respect to $GL_{p_1,p_2}$ in Section \ref{sec2.2:KCD} induces the Kronecker-invariance. In the remainder of the article, we shall assume (\rom{1}) of Proposition \ref{sec3.prop1}. Using the result of Proposition \ref{sec3.prop1}, we introduce the test statistics based on the eigenvalues of the sample core covariance matrix that satisfy the Kronecker-invariance property in the next three subsections. Consequently, the level of tests based on these statistics may be set exactly and uniformly across the null hypothesis of separability.

\subsection{Test Based on The Largest Eigenvalue of The Sample Core}\label{sec3.1}
Our first test statistic is based on the following result:
\begin{lemma}\label{sec3.1.lemma1}
   For $\Sigma\in\calS_p^+$, let $C=c(\Sigma)$. Then $\lambda_1(C)\geq 1$ with equality if and only if $\Sigma$ is separable. 
\end{lemma}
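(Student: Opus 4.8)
The plan is to exploit the defining property of the core component, namely that $k(C)=I_p$ means $C\in\calC_{p_1,p_2}^+$, together with the trace-normalization that the Kronecker map enforces. First I would recall that $C=c(\Sigma)$ satisfies $k(C)=I_p$, so $I_p=\argmin_{\Omega\in\calS_{p_1,p_2}^+}d(\Omega;C)$ where $d(\Omega;C)=\tr(\Omega^{-1}C)+\log|\Omega|$. The first-order stationarity conditions at $\Omega=I_p$ should yield scale constraints on $C$; in particular, optimizing over the scalar family $\Omega=\alpha I_p$ forces $\tr(C)=p$, since $\partial_\alpha[\alpha^{-1}\tr(C)+p\log\alpha]=0$ at $\alpha=1$ gives $\tr(C)=p$. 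This trace constraint is the crucial normalization: it says the eigenvalues $\lambda_1(C)\geq\cdots\geq\lambda_p(C)>0$ average to $1$, so their arithmetic mean is exactly $1$, forcing $\lambda_1(C)\geq 1$ immediately.

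Next I would address the equality case. If $\Sigma$ is separable, then $\Sigma\in\calS_{p_1,p_2}^+$, so $k(\Sigma)=\Sigma$ and hence $C=c(\Sigma)=H^{-1}\Sigma H^{-\top}=I_p$, giving $\lambda_1(C)=1$. Conversely, suppose $\lambda_1(C)=1$. Combined with $\tr(C)=\sum_i\lambda_i(C)=p$ and $\lambda_i(C)\leq\lambda_1(C)=1$ for all $i$, we get $\sum_i\lambda_i(C)\leq p$ with equality, which forces every $\lambda_i(C)=1$. Since $C$ is symmetric positive definite with all eigenvalues equal to $1$, we conclude $C=I_p$. By the remark that $\calC_{p_1,p_2}^+\cap\calS_p^+=\set{I_p}$ and the identifiability of the KCD, $C=I_p$ implies $\Sigma=K^{1/2}I_pK^{1/2,\top}=K\in\calS_{p_1,p_2}^+$, i.e., $\Sigma$ is separable.

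The main obstacle I anticipate is rigorously establishing the trace constraint $\tr(C)=p$ from the stationarity of the Kronecker map. While the scalar sub-family argument above is clean, I should verify that $\Omega=\alpha I_p$ is genuinely an admissible direction within $\calS_{p_1,p_2}^+$ (it is, since $\alpha I_p=(\sqrt{\alpha}I_{p_2})\otimes(\sqrt{\alpha}I_{p_1})$) and that the full minimization over $\calS_{p_1,p_2}^+$ at $I_p$ indeed yields this scalar condition as one of its consequences. An alternative, cleaner route avoids differentiation entirely: since $I_p$ minimizes $d(\cdot;C)$ over all separable matrices, in particular $d(I_p;C)\leq d(\alpha I_p;C)$ for all $\alpha>0$, and minimizing the right-hand side $\alpha^{-1}\tr(C)+p\log\alpha$ over $\alpha$ gives optimum at $\alpha=\tr(C)/p$ with value $p+p\log(\tr(C)/p)$; requiring this to be at least $d(I_p;C)=\tr(C)$ forces $\tr(C)/p=1$ by the strict convexity of $t\mapsto t-1-\log t$ having its unique zero at $t=1$. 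This inequality-based argument sidesteps any delicate manifold-optimization justification and is the version I would write up.
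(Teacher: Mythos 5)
Your proof is correct, and it reaches the same pivotal fact as the paper — that every core covariance matrix satisfies $\tr(C)=p$ — but it establishes that fact by a genuinely different argument. The paper's proof is essentially a citation: it invokes the equivalent linear-constraint characterization of core covariance matrices (Definition \ref{def4}, i.e., Proposition 3 of \cite{hoff2023a}), whose partial-trace identities immediately force $\tr(C)=p$, and then leaves the eigenvalue bookkeeping implicit. You instead derive the normalization self-containedly from the variational definition of the Kronecker map: since $k(C)=I_p$, comparing $d(I_p;C)=\tr(C)$ against the scalar separable family $\alpha I_p=(\sqrt{\alpha}I_{p_2})\otimes(\sqrt{\alpha}I_{p_1})$ yields $\tr(C)\leq p+p\log\bigl(\tr(C)/p\bigr)$, and the inequality $t-1-\log t\geq 0$ with unique zero at $t=1$ forces $\tr(C)=p$. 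Your inequality-based version (rather than the first-order stationarity sketch) is the right one to write up, exactly as you anticipated, since it needs no justification of differentiation over the constraint set. What each approach buys: the paper's route is shorter and connects the lemma to the linear-constraint description of $\calC_{p_1,p_2}^+$ that it reuses later (e.g., in simplifying $T_2$), while yours requires nothing beyond Definition \ref{def1} and so would survive even if one had never stated Definition \ref{def4}. The remainder of your argument — eigenvalues averaging to $1$ force $\lambda_1(C)\geq 1$; equality plus the trace constraint forces all eigenvalues equal to $1$, hence $C=I_p$ and $\Sigma=K$; conversely separability of $\Sigma$ gives $k(\Sigma)=\Sigma$ (the unconstrained minimizer of $d(\cdot;\Sigma)$ lies in the constraint set) and thus $C=I_p$ — matches what the paper treats as immediate, and is carried out correctly.
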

We shall introduce the partial trace operators \cite{zhang2011,zhang2012} to verify the above result.
\begin{definition}\label{sec3.1.def1}
 Let $\calS_q$ be the set of $q\times q$ symmetric matrices. For $M\in \calS_{p}$, let $(M_{[i,j]})$ be a partition of $M$ as in (\ref{sec2.3.def1.eq1}). Then the partial trace operators $\tr_1$ and $\tr_2$ are defined as 
 \begin{align*}
     \tr_1&:M\in \calS_{p_1p_2} \rightarrow \sum_{i=1}^{p_2} M_{[i,i]}\in \calS_{p_1},\\
     \tr_2&:M\in \calS_{p_1p_2}\rightarrow N=(n_{ij})\in \calS_{p_2} \text{ for } n_{ij}=\tr(M_{[i,j]}).
 \end{align*}
\end{definition}
Using the partial trace operators, we introduce an alternative characterization of the core covariance matrices, which is equivalent to Proposition $3$ of \cite{hoff2023a}, from which Lemma \ref{sec3.1.lemma1} follows. 
\begin{prop}\label{sec3.1.prop1}
Let $C$ be a $p\times p$ positive semidefinite matrix. Then $C$ is a core covariance matrix if and only if 
\begin{align*}
    \tr_1(C)=p_2I_{p_1}, \quad \tr_2(C)=p_1 I_{p_2}.
\end{align*}
\end{prop}
Since the traces of all partial traces of a given matrix are the same as its trace, we consequently have that $\tr(C)=p$ for any core $C$. Because the maximum is always at least the mean, this implies that $\lambda_1(C)\geq \tr(C)/p=\sum_{i=1}^p \lambda_i(C)/p=1$ for any $C\in \calC_{p_1,p_2}^+$. If $\lambda_1(C)=1$ and $\lambda_r(C)<1$ for at least one $r=2,\ldots,p$, then $\tr(C)<p$, leading to the contradiction. Thus, whenever $\lambda_1(C)=1$, $\lambda_i(C)=1$ for all $i$ so that $C=I_p$, i.e., $\Sigma$ is separable, proving Lemma \ref{sec3.1.lemma1}. 

Using the result of Lemma \ref{sec3.1.lemma1}, we propose a test that rejects the null hypothesis of separability for large values of $T_1(Y)=\lambda_1(\hat{C})$, where $\hat{C}=c(S)$, and $S$ is the sample covariance matrix of $Y_1,\ldots,Y_n$ sampled according to the model (\ref{sec3.eq1}). Specifically, we propose a test $\phi_1(Y)=I(T_1(Y)>q_{1-\alpha})$, where $q_{1-\alpha}$ is the $1-\alpha$ quantile of $T_1(Y)$ under the null hypothesis of separability for a given level $\alpha\in(0,1)$. 

As a consequence of Proposition \ref{sec3.prop1}, the distribution of $T_1(Y)$ does not depend on the unknown parameter $K:=k(\Sigma)$, and thus $T_1(Y)$ is Kronecker-invariant. As such, the size and power of the test $\phi_1$ also do not depend on $K$. For the asymptotic size of the test, we derive the asymptotic distribution of a suitably transformed $T_1(Y)$ under some regularity conditions in Section \ref{sec4.3:asymp.null}, maintaining Kronecker-invariance non-asymptotically. We also study the consistency of (a version of) $\phi_1$ under some regime of the population cores. That is, letting  $\beta_n(\phi_1):=\bbP(\phi_1(Y)=1|I,C)$ be the power of $\phi_1$, we show that $\beta_n(\phi_1)\rightarrow 1$ as $n\rightarrow\infty$. When obtaining the value of $q_{1-\alpha}$ or studying the power, we can assume $K=I_p$ without loss of generality, due to the Kronecker-invariance.

While the asymptotic null distribution may give some reasonable approximation of the value of $q_{1-\alpha}$ for very large sample sizes, such approximation may not be accurate for moderate sample sizes, as shown in Section \ref{sec5.1:null.simul}. Thus, with moderate sample sizes, the value of $q_{1-\alpha}$ may instead be approximated to an arbitrary degree of accuracy via Monte Carlo simulation, as long as the distribution $\calP_0$ can be specified. For example, under the null hypothesis that $Y_1,\ldots,Y_n\overset{i.i.d.}{\sim}N_{p_1\times p_2}(0,\Sigma_2\otimes \Sigma_1)$, the value of $q_{1-\alpha}$ can be approximated according to the following Monte Carlo simulation scheme. For $j=1,\ldots,J$, 
\begin{align}\label{mc.simul}
    \begin{split}
        & \textbf{Step 1) } \text{Simulate } Y_1^j,\ldots,Y_n^j\overset{i.i.d.}{\sim} N_{p_1\times p_2}(0,I_p).\\
        & \textbf{Step 2) } \text{Compute the sample covariance matrix } S_j \text{ of } Y_1^j,\ldots,Y_n^j, \text{ and thus } \hat{C}_j=c(S_j).\\
        & \textbf{Step 3) } \text{Compute } \ell_j=\lambda_1(\hat{C}_j).
    \end{split}
\end{align}

Then the Monte Carlo approximation of $q_{1-\alpha}$, denoted by $\hat{q}_{1-\alpha}$, is given by the $1-\alpha$ sample quantile of $\ell_1,\ldots,\ell_J$. Note that one can assume $\Sigma_2\otimes \Sigma_1=I_p$ in the above Monte Carlo simulation, as $T_1(Y)$ is Kronecker-invariant.

\subsection{Generic Sphericity Tests}\label{sec3.2}
The population covariance matrix $\Sigma$ is separable if and only if its core component $C:=c(\Sigma)$ is the identity matrix $I_p$. This suggests using a generic sphericity test of $C$ to assess the separability of $\Sigma$. In general, these test the null hypothesis that $\Psi=\sigma^2 I_p$ for some unknown $\sigma^2>0$ using a test statistic $f(S)$ based on the sample covariance matrix $S$. Because the only $p\times p$ spherical core covariance matrix is the identity $I_p$, by replacing $\Psi$ and $S$ with $C$ and $\hat{C}$, respectively, the test statistic $U(Y):=f(\hat{C})$ can be used to test whether $C=I_p$. However, the Kronecker-invariance of the test statistic $U(Y)$ is also desired so that exact error rate control can be maintained regardless of the values of $k(\Sigma)$. By Proposition \ref{sec3.prop1}, the Kronecker-invariance is guaranteed if $f$ is a spectral statistic, i.e., $f(\hat{C})$ depends on $\hat{C}$ only through its eigenvalues. Indeed, spectral statistics are often used in the more general problem of testing the sphericity of a covariance matrix \cite{john1940,ledoit2002,fisher2010,wang2013,li2016,ding2020,wang2021}. 

A classical sphericity test based on a spectral statistic is the likelihood ratio test (LRT) \cite{wang2013,anderson2003}. However, its test statistic is not defined in high-dimensional settings. Consequently, we shall adopt a modified version suggested by \cite{wang2021} instead as a test statistic to evaluate the separability of $\Sigma$. To accommodate high-dimensional settings, \cite{wang2021} introduced a modification to the LRT, resulting in the statistic   
\begin{align*}
    W(S)=-\sum_{i=1}^p \log\parentheses{u_i+\sum_{j=1}^p u_j/p}+p\log\parentheses{\sum_{j=1}^p u_j},
\end{align*}
where $u_1\geq \cdots\geq  u_p\geq 0$ are eigenvalues of a sample covariance matrix $S$. They then proposed the extended LRT (ELRT) that rejects the null of sphericity in favor of the alternative hypothesis for large values of $W$. Replacing $S$ with $\hat{C}$, our second test statistic is then
\begin{align*}
    T_2(Y):=W(\hat{C})=-\sum_{i=1}^p \log\parentheses{\hat{u}_i+\sum_{j=1}^p \hat{u}_j/p}+p\log\parentheses{\sum_{j=1}^p \hat{u}_j}=-\sum_{i=1}^p \log\parentheses{\hat{u}_i+1}+p\log p
\end{align*}
for the eigenvalues $\hat{u}_1\geq \cdots\geq  \hat{u}_p\geq 0$ of $\hat{C}:=c(S)$. The last equality holds because the trace of a core is always $p$ as in Section \ref{sec3.1}. The corresponding separability test is $\phi_2(Y)=I(T_2(Y)>q_{1-\alpha})$, where $q_{1-\alpha}$ is the $1-\alpha$ quantile of the distribution of $T_2(Y)$ for a given level $\alpha\in(0,1)$. As with the test described in Section 3.1, by the invariance of $T_2$, its null distribution may be approximated to an arbitrary degree of accuracy via simulation, employing the Monte Carlo simulation described in (\ref{mc.simul}). 

Note that \cite{wang2021} derived the asymptotics of the ELRT, including the asymptotic null distribution and the formula of the asymptotic power for the regime $p/n\rightarrow\gamma\in(0,\infty)$ as $n$ grows. Given the construction of $T_2(Y)$, one may expect that these are directly translated to the asymptotics of $T_2(Y)$. Unfortunately, empirical evidence suggests that this may not be true, as in Figure \ref{sec5.1.figure3} of Section  \ref{sec5.1:null.simul}. From Figure \ref{sec5.1.figure3}, one can observe that the variability of the simulated $T_2(Y)$ for Gaussian random samples with a suitable transformation across $1000$ iterations differs from the limiting law by \cite{wang2021}, when $(n,p_1,p_2)=(1600,20,20)$. We assume $k(\Sigma)=I_p$ without loss of generality by virtue of Kronecker-invariance. This may be due to the trace constraint on the core covariance matrices, i.e., $\tr(C)=p$ for any core $C$. Also, note that $T_2(Y)$ is a linear spectral statistic with respect to $\hat{C}$, i.e., 
\begin{align}\label{sec3.2.eq1}
    T_2(Y)\equiv p\int (-\log(x+1)+\log p) dF_{\hat{C}}(x)
\end{align}
for the empirical spectral distribution $F_{\hat{C}}$ of $\hat{C}$. Thus, it is technically difficult to analyze the asymptotics of $T_2(Y)$ based on the reference results, as the convergence rate of $\hat{K}$ is not fast enough for such analysis, unless the rate is $O(n^{-\tau})$ for some constant $\tau>1$. Such a rate may be too strong, as to the best of our knowledge, for Gaussian populations, the fastest known rate is $O(n^{-1/2}\log n)$ when $\Sigma$ is separable \cite{oliveira2026}. Nevertheless, we can still use the test $\phi_2(Y)$ with given parametric distributions. Moreover, we numerically demonstrate that the empirical power of $\phi_2(Y)$ is larger than that of $\phi_1(Y)$ in general, which is shown to be consistent under some regularity conditions in Section \ref{sec4.3:asymp.null}. Thus, we leave the conjecture that the test $\phi_2(Y)$ is consistent under the same regularity conditions.

\subsection{Test Based on Separable Covariance Expansion}\label{sec3.3}
The last test statistic we consider is motivated by both the separable expansion of $\Sigma$ and the core covariance matrix $C=c(\Sigma)$, based on the following observation, whose proof is trivial and thus omitted.

\begin{lemma}\label{sec3.3.lemma1}
 For $\Sigma\in\calS_p^+$, let $C=c(\Sigma)$. Then $\Sigma$ is separable if and only if $C$ has a separability rank of $1$. 
\end{lemma}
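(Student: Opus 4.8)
The plan is to establish the chain of equivalences
\[
\Sigma \text{ separable} \iff C = I_p \iff C \text{ has separability rank } 1,
\]
whereby the statement reduces to facts already recorded in the preliminaries. The left equivalence is essentially built into the Kronecker-core decomposition: by Definition \ref{def1} and the uniqueness of the KCD, $\Sigma\in\calS_{p_1,p_2}^+$ forces $k(\Sigma)=\Sigma$ and hence $c(\Sigma)=I_p$, while conversely $C=I_p$ gives $\Sigma=K^{1/2}I_pK^{1/2,\top}=K\in\calS_{p_1,p_2}^+$. (This is exactly the assertion, used throughout Section \ref{sec2.2:KCD}, that $\Sigma$ is separable if and only if its core is the identity.) Thus only the right-hand equivalence carries any content.

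For the easy direction, I would simply observe that $I_p=I_{p_2}\otimes I_{p_1}\in\calS_{p_1,p_2}^+$, so $\text{rank}(\calR(I_p))=1$ by the rearrangement characterization of separability rank in (\ref{sec2.3.eq2}); hence $C=I_p$ has separability rank $1$.

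The reverse direction is where the core structure of $C$ is used. Suppose $C=c(\Sigma)$ has separability rank $1$. Applying the characterization ``$\Sigma\in\calS_{p_1,p_2}^+\iff R=1$'' of Section \ref{sec2.3:sep.cov.exp} to the positive definite matrix $C$ yields $C\in\calS_{p_1,p_2}^+$. On the other hand, being a core component, $C$ always lies in $\calC_{p_1,p_2}^+$, i.e. $k(C)=I_p$. Combining these two memberships with the identity $\calC_{p_1,p_2}^+\cap\calS_{p_1,p_2}^+=\{I_p\}$ forces $C=I_p$, which closes the chain.

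As the authors indicate, the argument is bookkeeping once the KCD and the rearrangement characterization of separability rank are in hand, so I would not expect a genuine obstacle. The only step deserving a moment's care is the passage from ``separability rank $1$'' to $C\in\calS_{p_1,p_2}^+$: a priori $\text{rank}(\calR(C))=1$ only gives $C=A\otimes B$ for symmetric $A,B$ that need not be positive definite, but positive definiteness of $C$ forces $A$ and $B$ to be simultaneously positive or negative definite, and flipping both signs if necessary places $C$ in $\calS_{p_1,p_2}^+$. Since this is precisely the equivalence already stated in Section \ref{sec2.3:sep.cov.exp}, no new work is required.
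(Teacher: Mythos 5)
Your proof is correct, but it routes through a different reduction than the one the paper sketches. The paper's (omitted) argument works directly with the rearrangement operator: writing $C=H^{-1}\Sigma H^{-\top}$ for a Kronecker-structured square root $H$ of $k(\Sigma)$, each term of a separable expansion of $\Sigma$ is conjugated into a separable term, so $\text{rank}(\calR(C))=\text{rank}(\calR(\Sigma))$; combining this whitening-invariance with the characterization $\Sigma\in\calS_{p_1,p_2}^+\iff\text{rank}(\calR(\Sigma))=1$ of Section \ref{sec2.3:sep.cov.exp} gives the lemma, and in fact the stronger statement that $\Sigma$ and its core share the same separability rank. You instead collapse everything onto the identity: $\Sigma$ is separable $\iff C=I_p$ (from the KCD), and a rank-one core must equal $I_p$, because applying the rank-one characterization to $C$ itself yields $C\in\calS_{p_1,p_2}^+$, while $C\in\calC_{p_1,p_2}^+$ and $\calC_{p_1,p_2}^+\cap\calS_{p_1,p_2}^+=\{I_p\}$. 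This avoids having to check that Kronecker conjugation preserves the rank of the rearrangement (the only step in the paper's route needing any verification), at the price of invoking the intersection identity; it also makes explicit the observation, stated separately in the paper, that the only core covariance matrix of separability rank one is $I_p$, which is precisely what powers the test statistic $T_3$. Your attention to the sign issue---that $\text{rank}(\calR(C))=1$ only gives $C=A\otimes B$ with symmetric factors, and positive definiteness of $C$ is what lets one normalize both factors to be positive definite---is a point the paper's stated equivalence leaves implicit, and you handle it correctly.
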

From the definition of the separability rank, it is immediate to see that the only $C\in\calC_{p_1,p_2}^+$ with separability rank $1$ is $I_p$. Also, the result of Lemma \ref{sec3.3.lemma1} follows from the definition of the separability rank because $C$ is obtained by whitening $\Sigma$ through some $A\in GL_{p_1,p_2}$. Using the rearrangement operator, it is straightforward to verify that the algebraic ranks of $\calR(C)$ and $\calR(\Sigma)$ are the same. Hence, Lemma \ref{sec3.3.lemma1} implies that testing the separability of $\Sigma$ is equivalent to testing
\begin{align}\label{sec3.3.eq1}
    H_0: \text{rank}(\calR(C))=1 \text{ versus } H_1:\text{rank}(\calR(C))>1.
\end{align}
To test the hypothesis (\ref{sec3.3.eq1}), we shall use the empirical version of (\ref{sec2.3.eq3}), based on $\hat{C}=c(S)$ for sample covariance matrix $S$, as a test statistic. Specifically, we propose the test statistic 
\begin{align}\label{sec3.3.eq2}
    T_3(Y)=\frac{\sum_{j=2}^{p_1^2\wedge p_2^2}\hat{\sigma}_j^2 }{\hat{\sigma}_1^2}=\frac{\sum_{j=1}^{p_1^2\wedge p_2^2}\hat{\sigma}_j^2}{\hat{\sigma}_1^2}-1=\frac{||\calR(\hat{C})||_F^2}{\hat{\sigma}_1^2}-1=\frac{||\hat{C}||_F^2}{\hat{\sigma}_1^2}-1
\end{align}
for random samples $Y=(Y_1,\ldots,Y_n)$ generated according to the model (\ref{sec3.eq1}), where $\hat{\sigma}_j:=\sigma_j(\calR(\hat{C}))$ and the last equality holds as $\calR$ is an isometry between $(\real^{p_1p_2\times p_1p_2},||\cdot||_F)$ and $(\real^{p_2^2\times p_1^2},||\cdot||_F)$ \cite{puchkin2024}. The associated test $\phi_3(Y)$ is to reject the null hypothesis of (\ref{sec3.3.eq1}) for values of $T_3$ that are larger than the $1-\alpha$ quantile $q_{1-\alpha}$ of this statistic under the null hypothesis, i.e., $\phi_3(Y)=I(T_3(Y)>q_{1-\alpha})$. In fact, for any core $C$, possibly positive semidefinite, it turns out that $\sigma_1(\calR(C))$ is always $\sqrt{p}$ as a consequence of \cite{kwok2021}.

\begin{prop}\label{sec3.3.prop1}
Let $C$ be a core covariance matrix, possibly positive semidefinite. For the rearrangement operator $\calR$ defined in Definition \ref{sec2.3.def1}, $\sigma_1(\calR(C))=\sqrt{p}$.     
\end{prop}
Therefore, the test statistic $T_3(Y)$ can be simplified as 
\begin{align*}
    T_3(Y)=\frac{||\hat{C}||_F^2}{p}-1.
\end{align*}
This implies that the test $\phi_3(Y)$ is equivalent to applying John's sphericity test \cite{john1940} to the sample core $\hat{C}$ to test the sphericity of $C=c(\Sigma)$, further motivating the use of $T_3(Y)$. Indeed, for the sample covariance matrix $S$, the test statistic of John's test is given by 
\begin{align*}
    V(S)=\frac{||S||_F^2/p}{\parentheses{\tr(S)/p}^2}-1.
\end{align*}
Recalling that $\tr(C)=p$ for any core $C$, it immediately follows that $T_3(Y)\equiv V(\hat{C})$. Also, by Proposition \ref{sec3.prop1}, we see that $T_3(Y)$ maintains the Kronecker-invariance as $||\hat{C}||_F^2$ depends only on the eigenvalues of $\hat{C}$, thereby obtaining the level of the test $\phi_3(Y)$ via Monte Carlo simulation. Nevertheless, we emphasize that the Kronecker-invariance of $T_3(Y)$ can arise purely from the separable expansion perspective by analogy of Proposition \ref{sec3.prop1}.   
\begin{prop}\label{sec3.3.prop2}
Assume the same as Proposition \ref{sec3.prop1}. Then the empirical distribution of the non-zero singular values of $\calR(\hat{C})$ for the sample core $\hat{C}$ does not depend on the true separable component $K$ if either condition (\rom{1}) or (\rom{2}) of Proposition \ref{sec3.prop1} is met.     
\end{prop}
Hence, any test statistic based on the singular values of $\calR(\hat{C})$ can achieve the Kronecker-invariance. For example, one can alternatively consider the test statistic as 
\begin{align*}
    \tilde{T}_3(Y)=\frac{\sum_{j=1}^{p_1^2\wedge p_2^2}\hat{\sigma}_j}{\hat{\sigma}_1}-1=\frac{\sum_{j=1}^{p_1^2\wedge p_2^2}\hat{\sigma}_j}{\sqrt{p}}-1.
\end{align*}

As an analogy to Section \ref{sec3.2}, since the asymptotics of John's sphericity test are well studied in the literature \cite{ledoit2002,wang2013,li2016,qiu2023}, particularly when $p/n\rightarrow\gamma\in(0,\infty)$ as $n$ grows, one may hope that these asymptotics are well transferred to those of $T_3(Y)$. However, it turns out that this may not be true. To illustrate this, we compare the empirical distribution of simulated $nT_3(Y)-p-1$ across $1000$ iterations for Gaussian random samples and $(n,p_1,p_2)=(1600,20,20)$ when $\Sigma=I_p$ so that $c(\Sigma)=I_p$ with the limiting law of $nV(S)-p-1$, $N(0,4)$, by \cite{ledoit2002} in Figure \ref{sec5.1.figure3} of Section \ref{sec5.1:null.simul}. Again, note that we assume $k(\Sigma)=I_p$ without loss of generality due to Kronecker-invariance.

From Figure \ref{sec5.1.figure3}, one can see that the asymptotic distribution of $nV(S)-p-1$ with $\Sigma=I_p$ does not imply that of $nT_3(Y)-p-1$. As discussed in Section \ref{sec3.2}, the trace constraint on the core may shift the location where $nT_3(Y)-p-1$ is concentrated, as there may be a restriction in the value of $||\hat{C}||_F^2$ due to the trace constraint. Also, the theoretical analysis is difficult, as $nT_3(Y)-p-1$ can be interpreted similarly to (\ref{sec3.2.eq1}). Nevertheless, as an analogy to Section \ref{sec3.2}, the test $\phi_3(Y)$ can still be used with given parametric distributions. Moreover, we derive the first-order limit of $T_3(Y)$ in Section \ref{sec4.4:consist} under the null hypothesis of separability to infer where it is concentrated. Furthermore, in Section \ref{sec5.2:emp.power}, we empirically illustrate that the power of $\phi_3$ is larger than $\phi_1$ in general. This, combined with the consistency of $\phi_1$ established in Section \ref{sec4.3:asymp.null}, leads to the conjecture that $\phi_3$ is also consistent under the same conditions.

\section{High-Dimensional Asymptotic Power under a Partial Isotropic Core}\label{sec4:theory}

\subsection{Core Covariance Matrix with a Low-Rank Partial Isotropic Structure}\label{sec4.1}

Recall that testing the separability of $\Sigma$ is equivalent to testing the sphericity of its core $C=c(\Sigma)$, where the only spherical $C$ is $I_p$. Thus, the test statistics proposed in Section \ref{sec3:test} are constructed with this insight, two of which are obtained by applying existing sphericity tests to the sample core. Note that the power in the high-dimensional sphericity testing literature \cite{wang2013,wang2021,li2016,ding2020} is commonly analyzed under alternative regimes with partial-isotropy rank fixed in $n,p$. Here we say $\Omega\in\calS_p^+$ has a partial-isotropy rank$-r$ if $\lambda_1(\Omega)\geq \cdots \geq \lambda_r(\Omega)>\lambda_{r+1}(\Omega)=\cdots=\lambda_p(\Omega)>0$. 

Inspired by such power analysis, we study the asymptotics of the proposed test statistics under the null hypothesis of separability and the alternative regime of partial-isotropy cores in the next three subsections, except for $T_2(Y)$. These include asymptotic distributions, consistency of the test, and the first-order limit. We also evaluate the empirical power on such regimes in Section \ref{sec5.2:emp.power}. Note that these asymptotics follow from the asymptotic spectral equivalence between the sample covariance matrix and its core, which is implied by the convergence of the Kronecker MLE $\hat{K}$ (see Section \ref{sec4.2:spec.equiv}--\ref{sec4.3:asymp.null}). Hence, for most scenarios $(p_1,p_2,r)$ admitting the partial-isotropy rank$-r$ core with fixed $r$ in $(p_1,p_2,n)$, we will prove the convergence of $\hat{K}$ under such regimes in Section \ref{sec4.2:spec.equiv}. Due to the equivariance of $\hat{K}$ as discussed in Section \ref{sec2.2:KCD}, this can be done assuming $k(\Sigma)=I_p$. By Proposition 2.2 of \cite{sung2025}, the core with a partial-isotropy rank$-r$ is represented as below.
\begin{lemma}[\textbf{Proposition 2.2 of \cite{sung2025}}]\label{sec4.1.lemma1}
Suppose $C\in\calC_{p_1,p_2}^+$ has a partial-isotropy rank$-r$ structure for some $r<p$. Then $C=(1-\lambda)AA^\top+\lambda I_p$ for some $A\in \real^{p\times r}$ of full-column rank such that $AA^\top$ is a core, and $\lambda\in(0,1)$.  
\end{lemma}
In view of the above lemma, the core $C$ can have a partial-isotropy rank$-r$ if there exists a rank$-r$ core. As discussed in Section \ref{sec2.2:KCD}, the core, possibly positive semidefinite, always has the separable component or the Kronecker MLE as $I_p$. That is, a valid choice of $r$ is possible only if there exists a rank$-r$ positive semidefinite matrix whose Kronecker MLE is $I_p$. By Theorem $1.2$ of \cite{derksen2021}, it turns out that the possible values of $r$ depend on the value of $(p_1,p_2)$ as follows; 
\begin{itemize}
    \item $p_1^2+p_2^2-rp_1p_2<0\Leftrightarrow p_1/p_2+p_2/p_1<r$; 
    \item $p_1^2+p_2^2-rp_1p_2=0$;
    \item $p_1^2+p_2^2-rp_1p_2=d^2$ for $d=\text{gcd}(p_1,p_2)$.
\end{itemize}
For all other scenarios, the Kronecker MLE does not exist in a strict algebraic sense \cite{derksen2021}, and so neither does the rank$-r$ partial-isotropy core. For the first scenario, there is a sort of freeness in the choice of $r$ if $\max\set{p_1/p_2,p_2/p_1}$ is fixed in $n$, as a generic $(p_1,p_2,r)$ would satisfy the first scenario. While $r$ can be taken to be fixed for the other scenarios also, there is a restriction on the possible values of $r$ depending on $(p_1,p_2)$.    
\begin{lemma}\label{sec4.1.lemma2}
Assume $p_1\geq p_2$ and $r\in \bbN$, and let $d=\text{gcd}(p_1,p_2)$. Then the following are true:
\begin{itemize}
    \item $p_1^2+p_2^2-rp_1p_2=0$ if and only if $(p_1,p_2,r)=(p_1,p_1,2)$.
    \item $p_1^2+p_2^2-rp_1p_2=d^2$ if and only if $r\geq 1$ and $(p_1,p_2)$ is a multiple of one of the elements with positive entries of a sequence that  $\set{(x_{n+1},x_{n})}_{n=1}^{\infty}$ such that $x_0=0$ and $x_n$ for $n\geq 1$ is the largest solution to the quadratic equation $x^2-rx_{n-1}x+x_{n-1}^2-1=0$. In particular, $(p_1,p_2,r)=((k+1)m,km,2)$ for $k,m\in\bbN$ and $(p_1,p_2,r)=(p_2r,p_2,r)$ satisfy this condition.   
\end{itemize}
\end{lemma}
As a remark, the above implies that it is still possible to take a fixed value of $r$ in the regime where $p_1,p_2$ may grow with $n$, even when $p_1^2+p_2^2-rp_1p_2=0,d^2$. Moreover, the value of $r$ may depend on $(p_1,p_2)$. For instance, if $p_1\geq p_2$, $r=1$ is possible only when $p_1=p_2$, and $r=2$ only when $p_1=p_2$ or $(p_1,p_2)=((k+1)m,km)$. The latter case follows because if $r=2$, the quadratic equation $x^2-2x_{n-1}x+x_{n-1}^2-1=0$ implies that $x_n=x_{n-1}+1$. Observe that $r=2$ cannot satisfy $p_1^2+p_2^2-rp_1p_2<0$ as $2\leq p_1/p_2+p_2/p_1<r$.

Note that the aforementioned asymptotics of the test statistics may require the constant or converging ratio of the spiked eigenvalue to the non-spiked eigenvalue in the core, referred to as the signal-to-noise ratio (SNR). To motivate such assumptions, we study the eigenstructure of the cores corresponding to some cases in Lemma \ref{sec4.1.lemma2}, using the result of Proposition \ref{sec3.1.prop1}.
    \begin{thm}\label{sec4.1.thm1}
Suppose $A\in\real^{p\times r}$ is of full-column rank such that $AA^\top$ is a rank$-r$ core. Write $A=[\text{vec}(A_1),\ldots,\text{vec}(A_r)]$ for $A_i\in \real^{p_1\times p_2}$ and assume $p_1\geq p_2$ without loss of generality. Then the following are true: 
\begin{itemize}
    \item If $(p_1,p_2,r)=(p_1,p_1,2)$, there exist $1\geq d_1>\cdots>d_k\geq 0$, $n_1+\cdots+n_k=p_1$, $O_j\in\calO_{n_j}$, and $U,V\in \calO_{p_1}$, such that for $D_1=\bigoplus_{i=1}^k d_i I_{n_i}$ and $D_2=\bigoplus_{i=1}^k \sqrt{1-d_i^2}O_i$,  
    \begin{align*}
        A_1=\sqrt{p_1}UD_1V^\top,\quad A_2=\sqrt{p_1}UD_2V^\top.
    \end{align*}
     If $k=1$, $O_1\neq \pm I_{p_1}$.  
    \item If $(p_1,p_2,r)=((k+1)m,km,2)$, there exist $O_1,\ldots,O_k\in \calO_m$, $U=[U_1,\ldots,U_{k+1}]\in \calO_{p_1}$, and $V\in \calO_{p_2}$, where each block of $U$ has $m$ columns, such that for $D_1=\bigoplus_{i=1}^k\sqrt{k+1-i}I_m$ and $D_2=\bigoplus_{i=1}^k \sqrt{i}O_i$
       \begin{align*}
        A_1=\sqrt{m}[U_1,\ldots,U_k]D_1V^\top,\quad A_2=\sqrt{m}[U_2,\ldots,U_{k+1}]D_2V^\top.
    \end{align*}
    \item If $(p_1,p_2,r)=(p_2r,p_2,r)$, there exists $O\in \calO_{p_1}$ such that 
      \begin{align*}
        [A_1,\ldots,A_r]=\sqrt{p_2}O.
    \end{align*}
\end{itemize}
\end{thm}
The eigenvalues of the partial-isotropy rank-$r$ core covariance matrices for $(p_1,p_2,r)$ studied in Theorem \ref{sec4.1.thm1} are given below. The result implies that a constant or converging SNR is possible for the core based on a suitable choice of the non-spiked eigenvalue $\lambda\in(0,1)$, as shown in Section \ref{sec4.3:asymp.null}.  

\begin{cor}\label{sec4.1.cor1}
Suppose $C=(1-\lambda)AA^\top+\lambda I_p$ for $\lambda\in(0,1)$ and $A\in\real^{p\times r}$ of full-column rank such that $AA^\top$ is a rank$-r$ core. For given $(p_1,p_2,r)$, $\lambda_{r+1}(C)=\cdots=\lambda_p(C)=\lambda$ and the $r$ spiked eigenvalues of $C$ are given as follows: 
\begin{itemize}
    \item If $(p_1,p_2,r)=(p_1,p_1,2)$, $\lambda_i(C)=p_1^2(1-\lambda)\alpha_i+\lambda$
  \begin{align*}
        \alpha_1,\alpha_2=\frac{1\pm\sqrt{1-4\beta/p_1^2}}{2}
    \end{align*}
    for 
    \begin{align*}
        \beta=\sum_{j=1}^kn_jd_j^2\cdot \sum_{j=1}^k n_j(1-d_j^2)-\parentheses{\sum_{j=1}^k d_j\sqrt{1-d_j^2}\tr(O_j)}^2.
    \end{align*}
    Here $n_j,d_j$ and $O_j$ are those in Theorem \ref{sec4.1.thm1}.
    \item If $(p_1,p_2,r)=((k+1)m,km,2)$ or $(p_2r,p_2,r)$,  $\lambda_1(C)=\cdots=\lambda_r(C)=(1-\lambda)p/r+\lambda$.
\end{itemize}
\end{cor}

\subsection{Almost Sure Convergence of the Kronecker MLE}\label{sec4.2:spec.equiv}
Suppose $Y_1,\ldots,Y_n\overset{i.i.d.}{\sim}N_{p_1\times p_2}(0,\Sigma)$. Let $K=k(\Sigma)$, and $\hat{K}(Y)$ be the Kronecker MLE of $\Sigma$ based on $Y=(Y_1,\ldots,Y_n)$, i.e., the separable component of the sample covariance matrix based on $Y$. We introduce the metric $d_{op}(\cdot,\cdot)$ defined by 
\begin{align*}
    d_{op}(A,B)=||B^{-1/2}AB^{-1/2}-I_p||_2
\end{align*}
for $A,B\in\calS_{p}^+$, where $B^{1/2}$ is the symmetric square root of $B$. Built upon the proof techniques of \cite{oliveira2026}, we prove that $d_{op}(\hat{K},K)=O(n^{-\delta}\log n)$ almost surely for some fixed constant $\delta\in(0,1/2)$, generalizing the result of \cite{oliveira2026}. The choice of $\delta$ depends on the behavior of the non-spiked eigenvalue $\lambda$ of a partial-isotropy $C=c(\Sigma)$. Note that \cite{oliveira2026} proved $d_{op}(\hat{K},K)=O(n^{-1/2}\log n)$ almost surely when $\Sigma$ is separable, i.e., $C=I_p$. We consider most scenarios of $(p_1,p_2,r)$ for which a partial-isotropy rank$-r$ core exists as studied in Section \ref{sec4.1} for fixed $r$ in $(n,p_1,p_2)$. In the next subsection, under some regularity conditions, the asymptotic spectral equivalence between the sample covariance matrix $S$ and its core $\hat{C}$ will be deduced from the convergence of $\hat{K}$ when $K=I_p$ by virtue of the Kronecker-invariance. This enables studying the asymptotics of $T_1(Y)$ and $T_3(Y)$ and the test $\phi_1(Y)$ based on existing random matrix theories in Section \ref{sec4.3:asymp.null}--\ref{sec4.4:consist}.

By the equivariance of the Kronecker MLE (the separable component of the sample covariance matrix) as in Section \ref{sec2.2:KCD}, we assume $K=I_p$ without loss of generality, and thus $\Sigma=C$. Also, by Lemma \ref{sec4.1.lemma1}, if $C$ exhibits a partial-isotropy rank$-r$ structure, then $C=(1-\lambda)AA^\top+\lambda I_p$ for $A\in \real^{p\times r}$ of full column-rank such that $AA^\top$ is a rank$-r$ core, and the non-spiked eigenvalue $\lambda\in(0,1)$. Write $A=[\text{vec}(A_1),\ldots,\text{vec}(A_r)]$ for $A_i\in\real^{p_1\times p_2}$. Then we have that 
\begin{align}\label{sec4.2.eq1}
    Y_i\overset{d}{\equiv}\sqrt{1-\lambda}\sum_{j=1}^rA_jz_{ij}+\sqrt{\lambda}E_i
\end{align}
for $E_i\overset{i.i.d.}{\sim}N_{p_1\times p_2}(0,I_p)$ and independently $z_{ij}\overset{i.i.d.}{\sim}N(0,1)$. Now, we provide the non-asymptotic concentration inequality for $\hat{K}(Y)$ from which the almost sure convergence of $\hat{K}(Y)$ under the metric $d_{op}$ follows.
\begin{thm}\label{sec4.2.thm1}
Let $Y_1,\ldots,Y_n$ be $p_1\times p_2$ random matrices generated according to the model in (\ref{sec4.2.eq1}), and $p_{\max}=\max\set{p_1,p_2},p_{\min}=\min\set{p_1,p_2}$. Assume that there exists a constant $\tau>1$ such that $\tau^{-1}<p_i/\sqrt{n}<\tau$ for $i=1,2$. Suppose $r$ is fixed in $(n,p_1,p_2)$ such that a rank$-r$ core exists given $(p_1,p_2)$, $\lambda\in(0,1)$ and $1-\lambda\asymp n^{-\alpha}$ for some constant $\alpha\geq 0$. Take a fixed constant $\delta\in (0,\min\set{(\alpha+1/2)/2,1/2})$. Then there exist universal constants $C,c_1,c_2,c_3,c_4,c_5,c_6>0$ satisfying the following properties. If $n\geq C(p_{\max}/p_{\min})$,
\begin{align}\label{sec4.2.thm1.eq1}
    d_{op}(\hat{K}(Y),I_p)=O(n^{-\delta}\log n)
\end{align}
with probability at least $1-\calP_{n,p_1,p_2}$, where 
\begin{align}\label{sec4.2.thm1.eq2}
\begin{split}
   \calP_{n,p_1,p_2}&=1-6\exp\parentheses{-c_1n^{1-2\delta}p_{\min}}-6\exp\parentheses{-c_2n^{1+\alpha-2\delta}}-6\exp\parentheses{-c_3n}-\parentheses{\frac{\sqrt{np}}{{p_1+p_2}}}^{-c_4(p_1+p_2)}\\
   &-\parentheses{\frac{1}{\sqrt{1-\lambda}}\frac{\sqrt{np_{\min}}}{p_1+p_2}}^{-c_5(p_1+p_2)}-\parentheses{\frac{1}{1-\lambda}\frac{\sqrt{np_{\min}}}{\sqrt{p_1}+\sqrt{p_2}}}^{-c_6(\sqrt{p_1}+\sqrt{p_2})}.
\end{split}
\end{align}
Consequently, $\hat{K}(Y)$ converges to $I_p$ almost surely. 
\end{thm}

As a remark, when $\lambda=1$, $\delta$ can be taken as $1/2$ in the above theorem with the improved upper bound on the failure event as shown in Theorem $1.11$ of \cite{oliveira2026}. This improvement is achieved by the fact that $Y_C^\top(Y_CY_C^\top)^{-1} Y_C$ is a uniformly random projection for $Y_C=[Y_1^\top,\ldots,Y_n^\top]$ when $Y_i\overset{i.i.d.}{\sim}N_{p_1\times p_2}(0,I_p)$. Moreover, if $\alpha=0$, the spiked eigenvalues of a partial-isotropy core $C$ may diverge as $p_1,p_2$ grow with $n$ because the trace of $AA^\top$ is always $p$ regardless of its rank $r$ by Proposition \ref{sec3.1.prop1}. Thus, Theorem \ref{sec4.2.thm1} implies that the Kronecker MLE $\hat{K}(Y)$ almost surely converges even when the spiked eigenvalues of $C$ diverge at the cost of a slower convergence rate. 

Note that the result of Theorem \ref{sec4.2.thm1} does not alter if the metric $d_{op}$ is replaced with the metric
\begin{align*}
    \tilde{d}_{op}(A,B)=||B_L^{-1}AB_L^{-1,\top}-I_p||_2
\end{align*}
for the Cholesky factor $B_L$ of $B$ by the Kronecker-invariance. Specifically, recall that the identifiable square root of $K$ for defining the core $C$ can be either the symmetric square root or the Cholesky factor, as in Section \ref{sec2.2:KCD}. Write $S=K^{1/2}(1/n\sum_{i=1}^n y_iy_i^\top)K^{1/2}=:K^{1/2}S_YK^{1/2}$, where $y_i=\text{vec}(Y_i)$ for $Y_i$'s generated according to the model in (\ref{sec4.2.eq1}). Let $K_L$ be the Cholesky factor of $K$. Since there exists $O\in \calO_{p_1,p_2}$ such that $K_L=K^{1/2}O$, the equivariance of the Kronecker MLE implies that
\begin{align*}
    \tilde{d}_{op}(k(S),K)&=||K_L^{-1}K^{1/2}k(S_Y)K^{1/2}K_L^{-1,\top}-I_p||_2=||O^\top K^{-1/2}K^{1/2}k(S_Y)K^{1/2}K^{-1/2}O-I_p||_2\\
    &=||k(S_Y)-I_p||_2=\tilde{d}_{op}(k(S_Y),I_p)=d_{op}(k(S_Y),I_p).
\end{align*}
The above shows that the result would be the same if $K^{1/2}$ in $S$ is replaced with $K_L$.

\subsection{Asymptotic Distribution of $T_1(Y)$ and Consistency of $\phi_1(Y)$}\label{sec4.3:asymp.null}
We derive the asymptotic distribution of a suitably transformed $T_1(Y)$ and the consistency of a version of $\phi_1(Y)$. Specifically, we will show that the asymptotic distribution of $T_1(Y)$ after some transformation follows the Tracy-Widom law of order $1$ ($TW_1$) under the null hypothesis of separability and some local regime. As in Section \ref{sec4.1}--\ref{sec4.2:spec.equiv}, suppose that the population core $C$ has a partial-istoropy rank$-r$ structure, i.e.,
\begin{align}\label{sec4.3.eq1}
    C=(1-\lambda)AA^\top+\lambda I_p,
\end{align}
where $\lambda\in(0,1]$ and $A\in\real^{p\times r}$ of full-column rank with fixed $r\in\bbN$ in $(n,p_1,p_2)$ for admitting a rank$-r$ core given $(p_1,p_2)$. In particular, if $\lambda=1$, $C$ is $I_p$ so that $\Sigma=K$ is separable, representing the null hypothesis of separability, as discussed in Section \ref{sec2.2:KCD}.

Assume that $Y_1,\ldots,Y_n$ are i.i.d. random matrices with 
\begin{align}\label{sec4.3.eq2}
    y_i\overset{d}{\equiv} K^{1/2}(UDV^\top)z_i
\end{align}
for $y_i=\text{vec}(Y_i)$ and $z_i=\text{vec}(Z_i)$, where $\bbE[Z_1]=0$ and $V[Z_1]=I_p$. Here $K^{1/2}$ is either the symmetric square root ($\calS_{p_1,p_2}^+$) or the Cholesky factor $(\calL_{p_1,p_2}^+$) of $K\in \calS_{p_1,p_2}^+$ and $UDV^\top$ is a SVD of square root $C^{1/2}$ of $C$ above as in (\ref{sec3.eq1}). Note that $\Sigma=V[Y_1]=K^{1/2}CK^{1/2,\top}$ so that $k(\Sigma)=K$ and $c(\Sigma)=C$. Let $S=1/n\sum_{i=1}^n y_iy_i^\top$, $\hat{K}=k(S)$, and $\hat{C}=c(S)$ throughout this section. Now we introduce the assumptions necessary to establish the results of this section.

\begin{itemize}
    \item[]\hypertarget{A1}{\textbf{(A1)}} $p_1/\sqrt{n}\rightarrow\gamma_1\in(0,\infty)$ and $p_2/\sqrt{n}\rightarrow\gamma_2\in(0,\infty)$ as $n\rightarrow \infty$.
    \item[]\hypertarget{A2}{\textbf{(A2)}} If $Z_i=(z_{i,uv})$ in (\ref{sec4.3.eq2}), $z_{i,uv}$'s are i.i.d. entries with $\bbE[z_{1,11}]=0$ and $\bbE[z_{1,11}^2]=1$. Also, there exist constants $C>0,\epsilon\geq 1$ such that for all $i,u,v$,
    \begin{align*}
        \bbP\parentheses{|z_{i,uv}|>t}\leq C\exp\parentheses{-t^\epsilon}.
    \end{align*}
   \item[]\hypertarget{A3}{\textbf{(A3)}} For fixed $r\in\bbN$ in $(n,p_1,p_2)$ admitting a rank$-r$ core and $\lambda$ in (\ref{sec4.3.eq1}), $||K^{-1/2}\hat{K}K^{-1/2,\top}-I_p||_2=O(a_n)$ with probability at least $1-o(1)$, where $a_n=\log n/\sqrt{n}$ when $\lambda=1$ and $a_n=\log n/n^{\delta}$ when $\lambda\in(0,1)$, $1-\lambda\asymp n^{-\alpha}$ for some constant $\alpha \geq 0$, and a constant $\delta\in \parentheses{0,\min\set{(\alpha+1/2)/2,1/2}}$.
\item[]\hypertarget{A4}{\textbf{(A4)}} If $r\in \bbN$ is fixed in $(n,p_1,p_2)$ for which rank$-r$ core exists given $(p_1,p_2)$, $\sigma_i^2(A)/p\rightarrow d_i\in (0,1)$ as $n\rightarrow \infty$ for $i=1,\ldots,r$.
\end{itemize}

To briefly discuss the implications of the above, \hyperlink{A1}{\textbf{(A1)}} considers the regime where an asymptotic proportional growth rate holds. In the random matrix literature, it is standard to assume that $\hat{\gamma}\rightarrow\gamma \in (0,\infty)$ as $n\rightarrow \infty$. Under \hyperlink{A1}{\textbf{(A1)}}, $p/n\rightarrow\gamma_1\gamma_2$ as $n$ grows. We consider a version of the standard assumption as \hyperlink{A1}{\textbf{(A1)}} to account for the fact that both the Kronecker and the core covariance matrices depend on $(p_1,p_2)$. On the other hand, \hyperlink{A2}{\textbf{(A2)}} assumes a sub-exponential tail bound on the $z_{i,uv}$'s. The sub-exponential tail assumption is commonly assumed for the edge universality for the largest eigenvalue of sample covariance matrices \cite{bao2015,lee2016}. We will need this universality to prove that the asymptotic distribution of the transformed $T_1(Y)$ follows $TW_1$ under the null hypothesis and some local alternative regimes. 

Moreover, \hyperlink{A3}{\textbf{(A3)}} assumes the convergence rate of the Kronecker MLE $\hat{K}=k(S)$. The convergence rate $a_n$ when $\lambda=1$ can indeed be satisfied by  Theorem $1.11$ of \cite{oliveira2026} by additionally assuming \hyperlink{A1}{\textbf{(A1)}}. For the case when $\lambda\in(0,1)$, the rate $a_n$ can indeed be satisfied by Theorem \ref{sec4.2.thm1}, generalizing the result of \cite{oliveira2026}. Lastly, \hyperlink{A4}{\textbf{(A4)}} assumes the converging $\sigma_i^2(A)$ on the scale of $p$ as $n$ grows. It is natural to scale $\sigma_i^2(A)$ by $p$ as $\sum_{i=1}^r \sigma_i^2(A)=p$ by Proposition \ref{sec3.1.prop1}. Hence, it also holds that $\sum_{i=1}^r d_i=1.$ Note that this assumption can indeed be satisfied with a specific choice of $(p_1,p_2,r)$, e.g., the second item of Corollary \ref{sec4.1.cor1}. 

We first state the result on the asymptotic spectral equivalence between the sample covariance matrix and its core under the null hypothesis of separability and some local alternative regimes. While the result is straightforward from the assumption \hyperlink{A3}{\textbf{(A3)}}, it is still noteworthy as it enables deducing the asymptotic results on the test based on $T_1(Y)$ using the existing results in random matrix theory.

\begin{lemma}\label{sec4.3.lemma1}
Suppose $Y_i$'s, $z_i$'s and $C^{1/2}=UDV^\top$ are those in (\ref{sec4.3.eq2}). Let $S=1/n\sum_{i=1}^n y_iy_i^\top$ be the sample covariance matrix of $Y_i$'s, $\hat{C}=c(S)$, and $\tilde{C}=C^{1/2}(1/n\sum_{i=1}^n z_iz_i^\top)C^{1/2,\top}$. Assume \hyperlink{A3}{\textbf{(A3)}} and either (\rom{1}) or (\rom{2}) of Proposition \ref{sec3.prop1} is met. If $||\tilde{C}||_2$ is bounded in probability as $n$ grows, then $\max_{i\in [p]}|\lambda_i(\hat{C})-\lambda_i(\tilde{C})|=O_p(a_n)$.
\end{lemma}

Note that the condition that the bounded $||\tilde{C}||_2$ in probability as $n$ grows can indeed be met with a suitable choice of $\lambda$. For example, assuming \hyperlink{A1}{\textbf{(A1)}} and $\bbE[z_{i,uv}]=0,\bbE[z_{i,uv}^2]=1,\bbE[z_{i,uv}^4]<\infty$, it is well-known that $||\tilde{C}||_2\overset{a.s.}{\rightarrow} (1+\sqrt{\gamma_1\gamma_2})^2$ \cite{baik2006,baik2005,jiang2021,bai2012} when $\lambda=1$, i.e., the null hypothesis of separability. Under some additional regularity condition that can be satisfied by \hyperlink{A2}{\textbf{(A2)}}, the condition can also hold even under the alternative regime. For instance, when $(p_1,p_2,r)=(p_2r,p_2,r)$ for some fixed $r\in\bbN$, if $\lambda=p/(p+rc)$ for some constant $c>0$, Corollary \ref{sec4.1.cor1} and Proposition $2.1$ of \cite{jiang2021} imply that 
\begin{align*}
    ||\tilde{C}||_2\overset{a.s.}{\rightarrow}\begin{cases}
        (1+\sqrt{\gamma_1\gamma_2})^2,&\quad c\leq \sqrt{\gamma_1\gamma_2},\\
        1+c+\gamma_1\gamma_2(1+c)/c,&\quad c>\sqrt{\gamma_1\gamma_2}
    \end{cases}.
\end{align*}
A more generic choice of $\lambda$ will be provided in Theorem \ref{sec4.3.thm1} and Proposition \ref{sec4.3.prop1} below, whose proofs will imply the above. As a direct consequence of Lemma \ref{sec4.3.lemma1} and Theorem $1.1$ of \cite{gotze2011}, we have that the limiting spectral distribution of the sample core $\hat{C}$ is the Marchenko-Pastur (MP) law \cite{marchenko1967} under the null hypothesis of separability.

\begin{cor}\label{sec4.3.cor1}
Assume \hyperlink{A1}{\textbf{(A1)}}, \hyperlink{A2}{\textbf{(A2)}} with $C\leq 1/\epsilon$, and \hyperlink{A3}{\textbf{(A3)}} with $K=I_p$ and $\lambda=1$. Let $\hat{C}$ be that in Lemma \ref{sec4.3.lemma1} with $UDV^\top=I_p$ in (\ref{sec4.3.eq2}). Also, for two given distribution functions $F$ and $G$, let $d_{KS}(F,G):=\sup_{x\in\real}|F(x)-G(x)|$ be their Kolmogorov-Smirnov distance. If $F_{\hat{C}}$ and $F_{MP}^{\gamma}$ are the empirical spectral distribution of $\hat{C}$ and the distribution function of the MP law with parameter $\gamma$, respectively, then we have that 
\begin{align*}
   d_{KS}(F_{\hat{C}},F_{MP}^{\gamma})=O_p(\log n/\sqrt{n}).
\end{align*}
\end{cor}

Now we derive the asymptotic distributions of $T_1(Y)=\lambda_1(\hat{C})$ for the sample core $\hat{C}$ under the null hypothesis of separability and some local alternative regimes. In view of the asymptotic spectral equivalence established in Lemma \ref{sec4.3.lemma1}, we deduce the asymptotic distributions of $T_1(Y)$ from those of its natural counterpart, $\lambda_1(\tilde{C})$ for $\tilde{C}$ as in Lemma \ref{sec4.3.lemma1}. Note that $\hat{C}$ and $c(\tilde{C})$ share the same eigenvalues. To this end, we introduce an additional condition for the edge universality, namely, the moment-matching condition (see \cite{bao2015,han2016} for example).

\begin{definition}[\textbf{Moment-Matching}]\label{sec4.3.def1}
For two random matrices $U=(u_{ij}),V=(v_{ij})\in\real^{p\times n}$, we say $U$ matches $V$ up to order $k$ if there exists a constant $\tau>$1 such that 
\begin{align*}
    \bbE[U_{ij}^\ell]=\bbE[V_{ij}^\ell]+O(\exp(-(\log p)^\tau)),
\end{align*}
for $\ell=0,\ldots,k$. If $V_{ij}\overset{i.i.d.}{\sim} F$ for distribution $F$, then we say $U$ matches $F$ up to order $k$ if the above is satisfied.
\end{definition}
We introduce the quantities necessary to transform $T_1(Y)$, inducing the closed form of asymptotic distributions. Let $F_{\Omega}$ be the empirical spectral distribution of $\Omega\in\calS_p^+$. Suppose $\xi_{+,\Omega}\in[0,\lambda_1(\Omega)^{-1})$ is a unique solution in $x$ to the equation that for $\hat{\gamma}=p/n=p_1p_2/n$, 
\begin{align*}
    \int\parentheses{\frac{tx}{1-tx}}^2 dF_{\Omega}(t)=1/\hat{\gamma}.
\end{align*}
Indeed, such a solution uniquely exists \cite{lee2016,bao2015}. Define the quantities $\gamma_{0,\Omega}$ and $E_{+,\Omega}$ as
\begin{align*}
    \frac{1}{\gamma_{0,\Omega}^3}=\hat{\gamma}\int\parentheses{\frac{t}{1-t\xi_{+,\Omega}}}^3 dF_{\Omega}(t)+\frac{1}{\xi_{+,\Omega}^3},\quad E_{+,\Omega}=\frac{1}{\xi_{+,\Omega}}\parentheses{1+\hat{\gamma}\int\frac{t\xi_{+,\Omega}}{1-t\xi_{+,\Omega}}dF_{\Omega}(t)}.
\end{align*}
In particular, if $\Omega=I_p$, we have that 
\begin{align}\label{sec4.3.eq3}
    \gamma_0:= \gamma_{0,I_p}=\parentheses{\frac{\sqrt{\hat{\gamma}}}{(1+\sqrt{\hat{\gamma}})^4}}^{1/3},\quad E_+:= E_{+,I_p}=(1+\sqrt{\hat{\gamma}})^2.
\end{align}
Also, let $\hat{E}_+:=E_{+,C^{1/2,\top}\tilde{K}^{-1}C^{1/2}}$ for $\tilde{K}=k(\tilde{C})$ and $C^{1/2}=UDV^\top$ for $UDV^\top$ in (\ref{sec4.3.eq2}). In particular, if $\lambda=1$, $\hat{E}_+\equiv E_{+,\tilde{K}^{-1}}$. Using the results of \cite{lee2016,bao2015,han2016}, the asymptotic distributions of $T_1(Y)=\lambda_1(\hat{C})$ are given as follows.

\begin{thm}\label{sec4.3.thm1}
    Assume \hyperlink{A1}{\textbf{(A1)}}--\hyperlink{A4}{\textbf{(A4)}}. Suppose $Z=[z_1,\ldots,z_n]$ for $z_i$'s in \hyperlink{A2}{\textbf{(A2)}} matches $N(0,1)$ up to order $4$ and $\lambda,\alpha$ in \hyperlink{A3}{\textbf{(A3)}} satisfy one of the following:
\begin{itemize}
    \item[](\rom{1}) $\lambda=1$,
    \item[](\rom{2}) $\alpha\in(1,\infty)$,
    \item[](\rom{3}) $\alpha=1$, and $\lambda=\sigma_1^2(A)/(\sigma_1^2(A)+c)$ for some positive constant $c<\sqrt{\gamma_1\gamma_2}$.
\end{itemize}
Then it holds that
\begin{align*}
    \gamma_0n^{2/3}(T_1(Y)-\hat{E}_+)\Rightarrow TW_1, 
\end{align*}
where $\gamma_0$ and $\hat{E}_+$ are those defined above, and $\hat{E}_+\overset{p}{\rightarrow} E_+$.
\end{thm}

As a remark, with $\lambda$ as in the condition (\rom{3}) of Theorem \ref{sec4.3.thm1} and \hyperlink{A1}{\textbf{(A1)}}, $\alpha$ is indeed $1$ as 
\begin{align*}
    1-\lambda=\frac{c}{\sigma_1^2(A)+c}=\frac{1}{p}\cdot \frac{c}{\sigma_1^2(A)/p+c/p}\asymp 1/n.
\end{align*}

Moreover, under the condition (\rom{2}) of Theorem \ref{sec4.3.thm1}, the fact that the trace of the core is $p$ implies that $\lambda_1(C)$ for $C$ in (\ref{sec4.3.eq1}) converges to $1$ as $n$ grows, which will be shown in the proof of Theorem \ref{sec4.3.thm1}. Under (\rom{1}), $C=I_p$, representing the null hypothesis of separability, so that $\lambda_1(C)=1$. While this is not the case with $\alpha=1$, note that $\lambda_1(C)/\lambda=1+c$ with $\lambda$ as in (\rom{3}) of Theorem \ref{sec4.3.thm1}. Thus, the above result implies that the asymptotic behavior of $\gamma_0n^{2/3}(T_1(Y)-\hat{E}_+)$ depends on the signal-to-noise ratio (SNR) of $C$, measured by $\lambda_1(C)/\lambda$. Indeed, as shown below, if the SNR is large enough, the test based on a version of this statistic becomes consistent with slightly modified $\hat{E}_+$.

Note that while the result of Theorem \ref{sec4.3.thm1} holds for any unknown separable component $K$ due to Kronecker-invariance, the quantity $\hat{E}_+$ depends on $\tilde{K}=k(\tilde{C})$. Since $C$ is also typically unknown in practice, $\tilde{C}$ is not typically available. Therefore, one may hope to replace $\hat{E}_+$ with some deterministic quantity. Indeed, under the assumptions of Theorem \ref{sec4.3.thm1}, we have $\hat{E}_+$ converges to $E_+$ in probability. Hence, it is natural to question if $\hat{E}_+$ can be replaced with $E_+$ to obtain the asymptotic null distribution. Some empirical evidence supporting this leads to the following conjecture (see Section \ref{sec5.1:null.simul} and Appendix \ref{secC.1}). 

\begin{conj}\label{sec4.3.conj1}
Under the same conditions as in Theorem \ref{sec4.3.thm1}, it may hold that for the quantities $\gamma_0$ and $E_+$ in (\ref{sec4.3.eq3}), 
\begin{align*}
    \gamma_0n^{2/3}(T_1(Y)-E_+)\Rightarrow TW_1.
\end{align*}
\end{conj}

Consequently, we consider two versions of $T_1(Y)=\lambda_1(\hat{C})$ as follows. For the quantities $\gamma_0$, $E_+$ in (\ref{sec4.3.eq3}) and $\hat{E}_+$ but now with $\hat{E}_+:=E_{+,\tilde{K}^{-1}}$,
\begin{align}\label{sec4.3.eq4}
   T_1^1(Y)=\gamma_0n^{2/3}(T_1(Y)-E_+), \quad  T_1^2(Y)=\gamma_0n^{2/3}(T_1(Y)-\hat{E}_+)
\end{align}
We adopt $\hat{E}_+$ different from Theorem \ref{sec4.3.lemma1} for the following reason: as shown in the proof of the proposition below, under some conditions on $\alpha$ and $\lambda$, $\hat{E}_+$ converges to $E_+$ in probability even under the alternative regime. Thus, $\hat{E}_+$ is still a small random perturbation of $E_+$ under the alternative regime, unlike $E_{+,C^{1/2,\top}\tilde{K}^{-1}C^{1/2}}$. Also, the simulation studies in Section \ref{sec5.1:null.simul} and Appendix \ref{secC.1}--\ref{secC.2} imply that the asymptotic behaviors of $T_1^1(Y)$ and $T_1^2(Y)$ resemble each other. 

The corresponding tests are given by $\phi_1^1(Y)=I(T_1^1(Y)>C_{1,n})$ and $\phi_1^2(Y)=I(T_1^2(Y)>C_{2,n})$ for some critical values $C_{1,n}$ and $C_{2,n}$. For given a Kronecker-invariant test $\varphi$, let $\beta_n(\varphi):
=\bbP(\varphi(Y)=1|I,C)$, namely the power of $\varphi$ when $\Sigma=C\in\calC_{p_1,p_2}^+$. Now we prove the consistency of the tests $\phi_1^1(Y)$ and $\phi_1^2(Y)$, i.e., $\beta_n(\phi_1^i)\rightarrow 1$ as $n\rightarrow \infty$ for $i=1,2$, using the results of \cite{jiang2021,cai2020,jiang2021b}.
\begin{prop}\label{sec4.3.prop1}
Assume \hyperlink{A1}{\textbf{(A1)}}--\hyperlink{A4}{\textbf{(A4)}}. Suppose $\lambda,\alpha$ in \hyperlink{A3}{\textbf{(A3)}} satisfy one of the following:
\begin{itemize}
    \item[](\rom{1}) $\alpha\in[0,1)$,
    \item[](\rom{2}) $\alpha=1$, and $\lambda=\sigma_1^2(A)/(\sigma_1^2(A)+c)$ for some positive constant $c>\sqrt{\gamma_1\gamma_2}$.
\end{itemize}
If $C_{1,n},C_{2,n}=o(n^{5/3-\alpha})$, $\beta_n(\phi_1^1),\beta_n(\phi_1^2)\rightarrow 1$ as $n\rightarrow\infty$.
\end{prop}

Note that under the condition of (\rom{1}) and the assumptions \hyperlink{A1}{\textbf{(A1)}}, \hyperlink{A4}{\textbf{(A4)}}, the spiked eigenvalues of $C$ in (\ref{sec4.3.eq1}) diverge as $n$ grows because the trace of the core is always $p$. Therefore, it is natural for the tests $\phi_1^1(Y)$ and $\phi_1^2(Y)$ to be consistent under such a regime. Again, $\alpha=1$ is a critical regime, where the consistency depends on the behavior of $c$. 

Unlike the test based on $T_1(Y)$, the asymptotic results for the tests based on $T_2(Y)$ and $T_3(Y)$ are technically difficult. However, as shown in Section \ref{sec5.2:emp.power}, the empirical power of these tests is generally higher than that of the test based on $T_1(Y)$. Thus, this gives another conjecture that the consistency of the tests based on $T_2(Y)$ and $T_3(Y)$ may also hold under the same regime as in Proposition \ref{sec4.3.prop1}.   

\subsection{First-order Limit of $T_3(Y)$}\label{sec4.4:consist}
As discussed in Section \ref{sec3.3}, we derive the first-order limit of $T_3(Y)$. While deriving the asymptotic null distribution of $T_3(Y)$ is technically hard, the first-order limit of $T_3(Y)$ under the null hypothesis of separability tells us the center of the asymptotic distribution, provided that it exists. The result follows from the asymptotic spectral equivalence as in Lemma \ref{sec4.3.lemma1} along with Theorem $2.2$ of \cite{heiny2023}. 
\begin{prop}\label{sec4.4.prop1}
Let $S=K^{1/2}(1/n\sum_{i=1}^n z_iz_i^\top)K^{1/2,\top}$ for random vectors $z_i=\text{vec}(Z_i)$ whose entries are i.i.d. with mean zero, unit variance, and finite fourth moment, and $\hat{C}=c(S)$. Here $K^{1/2}$ is either the symmetric square root ($\calS_{p_1,p_2}^+)$ or the Cholesky factor ($\calL_{p_1,p_2}^+$) of $K\in\calS_{p_1,p_2}^+$. Assume \hyperlink{A1}{\textbf{(A1)}} and \hyperlink{A3}{\textbf{(A3)}} with $\lambda=1$. Then we have that
\begin{align*}
    T_3(Y)=\frac{||\hat{C}||_F^2}{p}-1\overset{p}{\rightarrow}\gamma_1\gamma_2.
\end{align*}
\end{prop}
The above result follows from the result of \cite{heiny2023} and the Kronecker-invariance of $T_3(Y)$. Based on the discussion of Section \ref{sec3.3}, one may consider the test statistic $\tilde{T}_3(Y):=||S||_F^2/\sigma_1(\calR(S))^2-1$ as a main motivation of $T_3(Y)$ is to test the separability rank of $\Sigma$, which is the same as that of $C=c(\Sigma)$. However, as numerically shown in Appendix \ref{sec.B:misc}, the limit of $\tilde{T}_3(Y)$ depends on the value of $K$. This further motivates the use of $T_3(Y)$ rather than $\tilde{T}_3(Y)$.  

\section{Illustration}\label{sec5:illus}

\subsection{Simulated Null Distributions}\label{sec5.1:null.simul}

We compare the Monte Carlo approximations of the null distributions of the proposed test statistics under a given data-generating distribution $\calP_0$ to their relevant asymptotic null distributions. If reasonable choices of $\calP_0$ are available, we recommend users to implement the tests using Monte Carlo-approximated (simulated) null distributions. Otherwise, one may use the asymptotic tests by comparing their test statistics to their asymptotic null distributions. However, an asymptotic test may have low power if the asymptotic null distributions are not well approximated with finite $(n,p_1,p_2)$. In particular, this is the case when the limit approaches from below as shown in Figures \ref{sec5.1.figure1}--\ref{sec5.1.figure2}. Therefore, it is important to examine the behavior of the simulated null distributions and the empirical sizes of the tests across various values of $(n,p_1,p_2)$ and choices of $\calP_0$. While we focus on Gaussian populations in this section, results with non-Gaussian populations are provided in Appendix \ref{secC.1}. 

To be specific, for $T_1(Y)$, we consider its two versions, $T_1^1(Y)$ and $T_1^2(Y)$, defined in (\ref{sec4.3.eq4}), and compare their simulated null distributions against $TW_1$ as studied in Theorem \ref{sec4.3.thm1}. For $T_2(Y)$ and $T_3(Y)$, we compare their simulated null distributions to the asymptotic null distributions by \cite{wang2021} and \cite{ledoit2002}, respectively. For a given $(n,p_1,p_2)$, we generate data as $Y_1,\ldots,Y_{n}\overset{i.i.d.}{\sim}N_{p_1\times p_2}(0,I_p)$, and then compute each test statistic across $1000$ Monte Carlo simulations. Note that by the Kronecker-invariance, we assume $k(\Sigma)=I_p$ without loss of generality. The Monte Carlo-approximated distributions of $T_1^1(Y)$ and $T_1^2(Y)$ under some (local) alternative regime for Gaussian populations are provided in Appendix \ref{secC.2}. Moreover, the Monte Carlo-approximated critical values of suitably transformed test statistics across additional values of $(n,p_1,p_2)$ are provided in Appendix \ref{sec.D:cutoff}.

We first examine the simulated null distributions of $T_1^1(Y)$ and $T_1^2(Y)$, defined in (\ref{sec4.3.eq4}). Note that $T_1^1(Y)$ is obtained by transforming $T_1(Y)$ using the quantities depending only on $(n,p_1,p_2)$, and $T_1^2(Y)$ by those additionally depending on $\Sigma$ only through its core component. Figures \ref{sec5.1.figure1}--\ref{sec5.1.figure2} present the empirical distributions of simulated $T_1^1(Y)$ and $T_1^2(Y)$ based on $1000$ Monte Carlo simulations with $(n,p_1,p_2)=(1600,80,40)$ and $(1600,80,80)$, respectively, along with the comparison to the asymptotic null distribution, $TW_1$. We observe that both empirical distributions of $T_1^1(Y)$ and $T_1^2(Y)$ approximate $TW_1$ well for large values of $(n,p_1,p_2)$. However, the approximation for $T_1^1(Y)$ is better than that of $T_1^2(Y)$, supporting Conjecture \ref{sec4.3.conj1}. Also, the limit seems to approach from below. The slower convergence for $T_1^2(Y)$ may be due to the estimation error of $\hat{K}$. We believe this estimation error impedes the convergence of the distribution of $T_1^1(Y)$ and $T_1^2(Y)$ so that the values of $(n,p_1,p_2)$ should be large to get a reasonable approximation. Indeed, in Tables \ref{sec5.1.table1}--\ref{sec5.1.table2}, we provide the empirical sizes of the tests based on $T_1^1(Y)$ and $T_1^2(Y)$ against $TW_1$ based on $1000$ Monte Carlo simulations for different values of $(n,p_1,p_2)$ at levels $\alpha=0.01$ and $0.05$, respectively. We observe that these sizes are approaching the level $\alpha$ as $(n,p_1,p_2)$ increases. However, these values should be large enough to provide a reasonable approximation.  

Consequently, the asymptotic test, where the empirical power of the test is evaluated based on the asymptotic null distribution, may be conservative due to the slow convergence. In fact, this makes the asymptotic test useful, as one would adopt a parsimonious separable covariance model when there is no sufficient evidence to adopt a more complex model. Hence, one can avoid making a Type-\rom{1} error. However, while we believe that the conservativeness of the asymptotic test based on $T_1^2(Y)$ is likely to hold for a range of $\calP_0$, we believe that the conservativeness of a test based on $T_1^1(Y)$ may depend on the heavy-tailedness of $\calP_0$. Namely, the less heavy-tailed $\calP_0$ is, the more conservative the asymptotic test based on $T_1^1(Y)$ is, as shown in Appendix \ref{secC.1}. 

For the purpose of obtaining more higher empirical power of the test, one may hope to employ Monte Carlo simulations using a reasonable distribution $\calP_0$. Since the distribution of $T_1^1(Y)$ provides a better approximation of $TW_1$, we shall evaluate the empirical power of the test based on $T_1^1(Y)$ rather than $T_1^2(Y)$ in Section \ref{sec5.2:emp.power}. Because $T_1^1(Y)$ is a strictly increasing transformation of $T_1(Y)$ involving quantities depending only on $(n,p_1,p_2)$, implementing the test $\phi_1^1(Y)$ based on $T_1^1(Y)$ is equivalent to implementing the test $\phi_1(Y)$ introduced in Section \ref{sec3.1}. Moreover, both $T_1^1(Y)$ and $T_1^2(Y)$ secure the Kronecker-invariance non-asymptotically.  

\begin{figure}[!ht]
    \centering
    \includegraphics{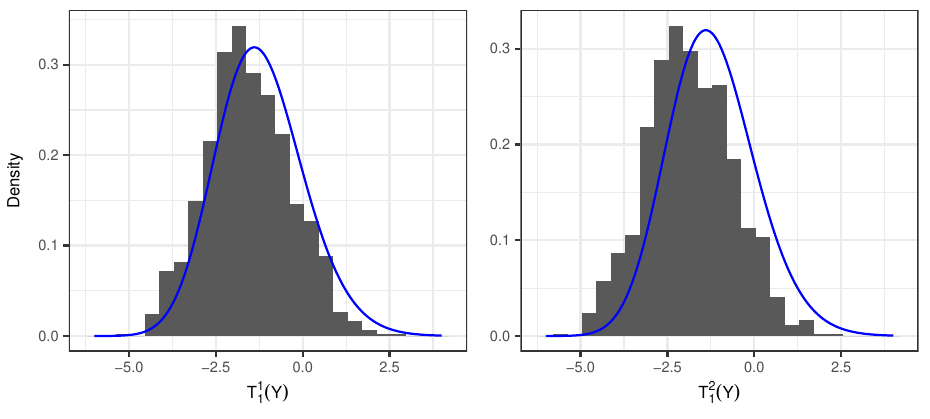}
    \caption{Monte Carlo-approximated null distributions of $T_1^1(Y)$ (left panel) and $T_1^2(Y)$ (right panel) based on $1000$ simulations under $N_{p_1\times p_2}(0,I_p)$ and the asymptotic null distribution $TW_1$ (blue line) with $(n,p_1,p_2)=(1600,80,40)$.}
    \label{sec5.1.figure1}
\end{figure}

\begin{figure}[!ht]
    \centering
    \includegraphics{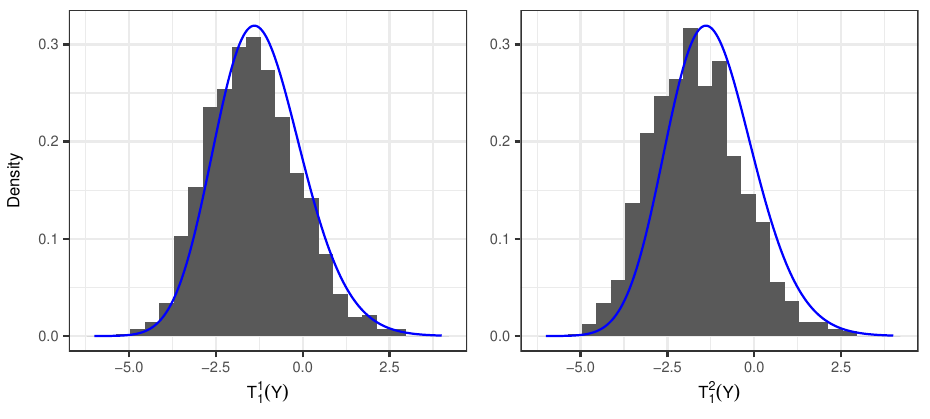}
    \caption{Monte Carlo-approximated null distributions of $T_1^1(Y)$ (left panel) and $T_1^2(Y)$ (right panel) based on $1000$ simulations under $N_{p_1\times p_2}(0,I_p)$ and the asymptotic null distribution $TW_1$ (blue line) with $(n,p_1,p_2)=(1600,80,80)$.}
    \label{sec5.1.figure2}
\end{figure}

\begin{table}[!ht]
    \centering
    \small 
    \begin{tabular}{ccccccc}
    \hline 
         & \multicolumn{6}{c}{$(n,p_1,p_2)$} \\ \cmidrule{2-7}
         & $(1600,20,20)$ & $(1600,40,20)$ & $(1600,80,40)$ & $(1600,80,80)$ & $(14400,60,60)$ & $(25600,80,80)$ \\ \hline\\[-0.3cm]
      $T_1^1(Y)$   & 0.002 & 0.001 & 0.002 & 0.006 & 0.007 & 0.007 \\ [0.1cm]
      $T_1^2(Y)$   & 0.000 & 0.001 & 0.002 & 0.005 & 0.004 & 0.006\\   \hline
    \end{tabular}
    \caption{Empirical sizes of the tests associated with $T_1^1(Y)$ and $T_1^2(Y)$ against $TW_1$ based on $1000$ Monte Carlo simulations under $N_{p_1\times p_2}(0,I_p)$ for different values of $(n,p_1,p_2)$ with $\alpha=0.01$.}
    \label{sec5.1.table1}
\end{table}

\begin{table}[!ht]
    \centering
    \small 
    \begin{tabular}{ccccccc}
    \hline 
         & \multicolumn{6}{c}{$(n,p_1,p_2)$} \\ \cmidrule{2-7}
         & $(1600,20,20)$ & $(1600,40,20)$ & $(1600,80,40)$ & $(1600,80,80)$ & $(14400,60,60)$ & $(25600,80,80)$ \\ \hline\\[-0.3cm]
      $T_1^1(Y)$   & 0.022 & 0.026 & 0.019 & 0.036 & 0.041 & 0.046 \\ [0.1cm]
      $T_1^2(Y)$   & 0.006 & 0.011 & 0.012 & 0.026 & 0.033 & 0.031\\   \hline
    \end{tabular}
    \caption{Empirical sizes of the tests associated with $T_1^1(Y)$ and $T_1^2(Y)$ against $TW_1$ based on $1000$ Monte Carlo simulations under $N_{p_1\times p_2}(0,I_p)$ for different values of $(n,p_1,p_2)$ with $\alpha=0.05$.}
    \label{sec5.1.table2}
\end{table}

Lastly, we provide the empirical distributions of a suitably transformed $T_2(Y)$ (see Theorem $1$--$2$ of \cite{wang2021}) and $nT_3(Y)-p-1$ based on $1000$ Monte Carlo simulations with $(n,p_1,p_2)=(1600,20,20)$ in Figure \ref{sec5.1.figure3}, along with the comparison of reference asymptotic null distributions by \cite{wang2021} and \cite{john1940,ledoit2002}, respectively. Recall that both these test statistics are obtained by applying sphericity tests by \cite{wang2021} and \cite{john1940,ledoit2002} to the sample core, respectively. They derived the asymptotic null distribution for their test statistic under the regime where $p/n\rightarrow\gamma\in(0,\infty)$ as $n$ grows, both of which are Gaussian distributions. We refer to Theorem $1$--$2$ of \cite{wang2021} for the formula of the mean and variance of the asymptotic null distribution of the test statistic by \cite{wang2021}. For John's sphericity test by \cite{john1940}, it was shown to be $N(0,4)$ \cite{ledoit2002}.

From Figure \ref{sec5.1.figure3}, one can clearly observe that the simulated null distributions of $T_2(Y)$ and $T_3(Y)$ are shifted to the left compared to those of the reference asymptotic distributions. Consequently, their associated tests will be also conservative as an analogy to those based on $T_1^1(Y)$ and $T_1^2(Y)$, which may hold for a range of $\calP_0$ as shown in Appendix \ref{secC.1}. As discussed in Section \ref{sec3.2}-\ref{sec3.3}, we believe the trace constraint on the core may shift the center of transformed $T_2(Y)$ and $nT_3(Y)-p-1$. Also, viewing these statistics as linear spectral statistics with respect to the sample core multiplied by $p$ as in (\ref{sec3.2.eq1}), deducing their asymptotic null distributions from the reference results is technically difficult, unless the Kronecker MLE $\hat{K}$ converges very fast. 

\begin{figure}[!ht]
    \centering
    \includegraphics{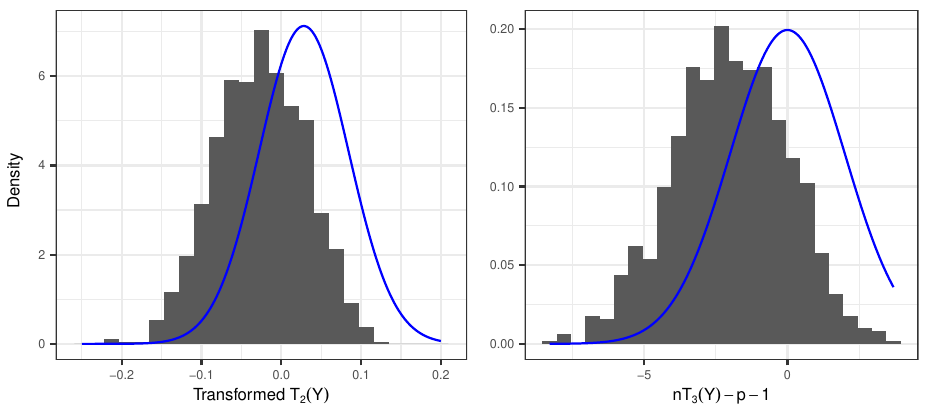}
    \caption{Monte Carlo-approximated null distributions of $T_2(Y)$ (left panel) and $T_3(Y)$ (right panel) based on $1000$ simulations under $N_{p_1\times p_2}(0,I_p)$ and the asymptotic null distributions in blue line by \cite{wang2021} (left panel) and \cite{ledoit2002} (right panel) with $(n,p_1,p_2)=(1600,20,20)$.}
    \label{sec5.1.figure3}
\end{figure}

\subsection{Empirical Power}\label{sec5.2:emp.power}

We illustrate the empirical power of the tests proposed in this article via several numerical studies. Here the power is evaluated by comparing a test statistic against the Monte Carlo-approximated null distribution obtained from a specified data-generating distribution $\calP_0$. We demonstrate that the empirical power of the proposed tests in this article is larger than that of existing separability tests, including the LRT, in general. While the LRT shows power comparable to some of the proposed tests, it is not well-defined in the high-dimensional regime where $n<p$, whereas the proposed test statistics are well-defined. Since the proposed test statistics are Kronecker-invariant by construction, it suffices to compute empirical power under the scenario where the population covariance matrix itself is a core $C\in \calC_{p_1,p_2}^+ $. Consequently, we generate data as $Y_1,\ldots,Y_n\overset{i.i.d.}{\sim} N_{p_1\times p_2}(0,C)$. We evaluate the power assuming the known mean, which is set to be $0$ without loss of generality, as is common in testing literature \cite{wang2013,ding2020,wang2021}. Nevertheless, in Appendix \ref{secC.4}, we also provide simulation studies with the unknown mean case, where the tendency of the empirical power is shown to be the same as that in this section. Additional simulation studies for non-Gaussian populations are provided in Appendix \ref{secC.3}.   

As competitors of the tests proposed in this article, we consider the LRT \cite{mitchell2006,lu2005} (\texttt{LRT}) and the nonparametric test by \cite{aston2017} (\texttt{PTCLT}), which is based on the projection of the difference between the sample covariance matrix and the partial trace estimator onto a low-dimensional subspace. For \texttt{PTCLT}, we use the subspace spanned by the leading eigenvector of the partial trace estimator. The test statistic of \texttt{LRT} satisfies the Kronecker-invariance, allowing us to approximate critical values via the Monte Carlo simulation scheme in (\ref{mc.simul}) by replacing the test statistic in \textbf{Step 3}. Using these approximated critical values, the empirical power is then computed using $K$ Monte Carlo simulations. In contrast, the test statistic of \texttt{PTCLT} is not Kronecker-invariant. We therefore compute its power using its asymptotic null distribution based on the central limit theorem (CLT) (Corollary 2.4 of \cite{aston2017}). For each $r=1,\ldots,R$, we simulate $Y_{1}^r,\ldots,Y_{n}^r\sim N_{p_1\times p_2}(0,C)$ for $C\in \calC_{p_1,p_2}^+$ described below, to compute the test statistic and the corresponding $p-$value under the CLT, denoted $\tilde{p}_{r}$. We implement \texttt{PTCLT} using the \texttt{R} function \texttt{\small clt$\_$test} from the $\texttt{R}$ package \texttt{covsep} \cite{tavakoli2018}. Given a level $\alpha\in(0,1)$, the empirical power is given by $1/R\sum_{r=1}^R I(\tilde{p}_r<\alpha)$, where we take $\alpha=0.05$ in our simulation studies. Although \cite{aston2017} recommended a bootstrap approach due to the slow convergence rate of the CLT, there were no significant differences in power, with the bootstrap only increasing the computational costs. In this section, we set $J=K=R=1000$, where $J$ is that in (\ref{mc.simul}). 

We discuss the choice of the population core $C$. Because the only separable $C$ is $I_p$, a large deviation of $C$ from $I_p$, such as a large value of $||C-I_p||_2$, is likely to increase the power of the proposed tests. Hence, we initially consider two values of $C$ with different spiked eigenstructures, $C_1$ and $C_2$, generated as follows: let $\Sigma_1$ and $\Sigma_2$ be $p\times p$ covariance matrices whose spectral decompositions are $\Gamma_1\Lambda_1\Gamma_1^\top$ and $\Gamma_2\Lambda_2\Gamma_2^\top$, respectively, where $\Gamma_1,\Gamma_2\in \calO_p$ are randomly generated, and 
\begin{align*}
    \Lambda_1&=\text{diag}(10,\underbrace{4,\ldots,4}_{p/2-1},\underbrace{1,\ldots,1}_{p/2}),\quad \Lambda_2=\text{diag}(4,\underbrace{3,\ldots,3}_{p/2-1},\underbrace{2,\ldots,2}_{p/2}).
\end{align*}
We then take $C_1=c(\Sigma_1)$ and $C_2=c(\Sigma_2)$. Note that $C_2$ exhibits a more moderate spiked eigenstructure than $C_1$. For instance, with $(p_1,p_2)=(16,32)$, the first $10$ largest eigenvalues of randomly generated matrices $C_1$ and $C_2$ are as follows:
\begin{align*}
    &C_1:3.936,\, 1.694,\, 1.686,\, 1.682,\, 1.680,\, 1.679,\, 1.678,\, 1.677,\, 1.675,\, 1.673,\\
    &C_2:1.595,\, 1.224,\, 1.221,\, 1.221,\, 1.220,\, 1.220,\, 1.220,\, 1.219,\, 1.219,\, 1.218.
\end{align*}
Therefore, the tests are expected to have higher power under $N_{p_1\times p_2}(0,C_1)$ than under $N_{p_1\times p_2}(0,C_2)$.

To more broadly examine how spikeness in the eigenstructure of $C$ affects power, we shrink $C_1$ and $C_2$ toward $I_p$; that is, we consider $C_{1,w}=wC_1+(1-w) I_p$ and $C_{2,w}=wC_2+(1-w)I_p$ for $w\in (0,1)$. Here, $1-w$ represents the shrinkage amount of $C_1$ and $C_2$ toward $I_p$. Thus, larger values of $w$ induce more spikeness in the eigenstructures of $C_{1,w}$ and $C_{2,w}$. Note that both $C_{1,w}$ and $C_{2,w}$ are also core covariance matrices as $\calC_{p_1,p_2}^+$ is convex by Proposition \ref{sec3.1.prop1}. In our simulation studies, we consider $w\in\set{1/10,\ldots,1}$. In addition, to investigate whether the empirical power converges or grows as $n$ grows under the regime in \hyperlink{A1}{\textbf{(A1)}}, we consider different values of $(\hat{\gamma}_1,\hat{\gamma}_2):=(p_1/\sqrt{n},p_2/\sqrt{n})$ and $n$. Specifically, we consider $(\hat{\gamma}_1,\hat{\gamma}_2)\in \set{(1/2,1/2),(1/2,1),(1,2),(2,2)}$ and $n\in\set{144,256,400,576,784,1024}$. The values of $(p_1,p_2)$ are determined according to those of $(\hat{\gamma}_1,\hat{\gamma}_2,n)$. 

 Figures \ref{sec5.2.figure1}--\ref{sec5.2.figure2} report the empirical power of the proposed tests and their competitors as functions of $n$ for $C_{1,w}$ and $C_{2,w}$, respectively, across selected values of $w$ and $\hat{\gamma}:=\hat{\gamma}_1\hat{\gamma}_2=p/n$. From these figures, one can observe that $\phi_2$, $\phi_3$, and \texttt{LRT} achieve substantially higher power than the other tests, with $\phi_3$ slightly dominating both $\phi_2$ and \texttt{LRT} in power. However, we emphasize that \texttt{LRT} is not well-defined if $n<p$. The power of \texttt{PTCLT} is generally very low. Note that the power of $\phi_2$ and $\phi_3$ approach $1$ with moderate sample sizes for all values of $(\hat{\gamma}_1,\hat{\gamma}_2)$ when $w=0.4,0.6,0.8$ for $C_{1,w}$ and $w=0.6,0.8$ for $C_{2,w}$. This implies that they can effectively detect non-separability in high-dimensional settings even with weak signals in the core component, e.g., a small value of $||C-I_p||_2$. Even when $w$ is small, the power increases with $n$, and eventually approaches $1$ as seen from Figure \ref{sec5.2.figure1} with $w=0.2$. This result holds even when the population covariance matrix involves a non-trivial separable component, i.e., $I_p$, as the statistics $T_2$ and $T_3$ are Kronecker-invariant by construction. Also, we note that the power of $\phi_2$ and $\phi_3$ is generally higher than $\phi_1$. Therefore, this suggests that the consistency of $\phi_2$ and $\phi_3$ may also hold under the regime where the consistency of $\phi_1$ is analyzed. In terms of numerical efficiency, $\phi_3$ is more numerically efficient to implement than $\phi_2$. Under the regime of \hyperlink{A1}{\textbf{(A1)}}, $T_2$ requires computing all the eigenvalues of $\hat{C}$ with complexity $O(p^3)=O(n^3)$. In contrast, the computational complexity of computing $T_3$ arises from computing $||\hat{C}||_F^2$, which is $O(p^2)=O(n^2)$.

The relatively low power of $\phi_1$ and \texttt{PTCLT} is expected, because their test statistics leverage the projection of certain random matrices onto low-dimensional subspaces, whereas the other tests utilize all the eigenvalues of $\hat{C}$ or $\calR(\hat{C})\calR(\hat{C})^\top$. For example, the test statistic of $\phi_1$ is based on $\lambda_1(\hat{C})=<\hat{C}\hat{v},\hat{v}>$, where $\hat{v}$ is the leading eigenvector of $\hat{C}$. A similar interpretation can be made for \texttt{PTCLT} (see (2.4) of \cite{aston2017}). Furthermore, the slow rate of convergence to the asymptotic null distribution for the test statistic of \texttt{PTCLT} may account for its low power. However, as shown in Figure \ref{sec5.2.figure1}, if $w$ is not too small, $\phi_1$ can achieve power comparable to $\phi_2$, $\phi_3$, and \texttt{LRT} with moderately large $n$. This suggests that $\phi_1$ performs well if $C$ has sufficient spikeness, e.g., a large value of $\lambda_1(C)$. 

\begin{figure}[!ht]
    \centering
    \includegraphics{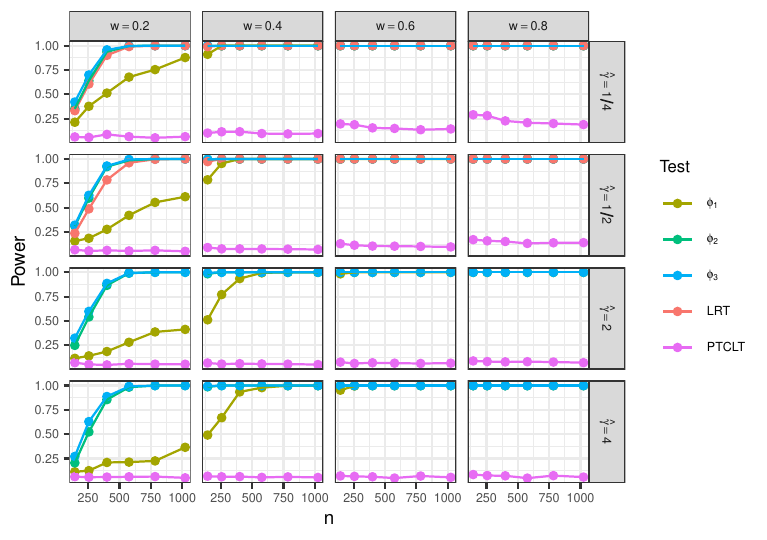}
    \caption{Empirical power of the separability tests under $N_{p_1\times p_2}(0,C_{1,w})$ as functions of $n$ across each $w=0.2,0.4,0.6,0.8$, and $\hat{\gamma}:=\hat{\gamma}_1\hat{\gamma}_2=p_1p_2/n$. The tests $\phi_1,\phi_2,$ and $\phi_3$ are those proposed in this article. \texttt{LRT} and $\texttt{PTCLT}$ denote the tests by \cite{mitchell2006,lu2005} and \cite{aston2017}, respectively. \texttt{LRT} is not present when $\hat{\gamma}>1$, since its test statistic is not well-defined in this case.}
    \label{sec5.2.figure1}
\end{figure}

\begin{figure}[!ht]
    \centering
    \includegraphics{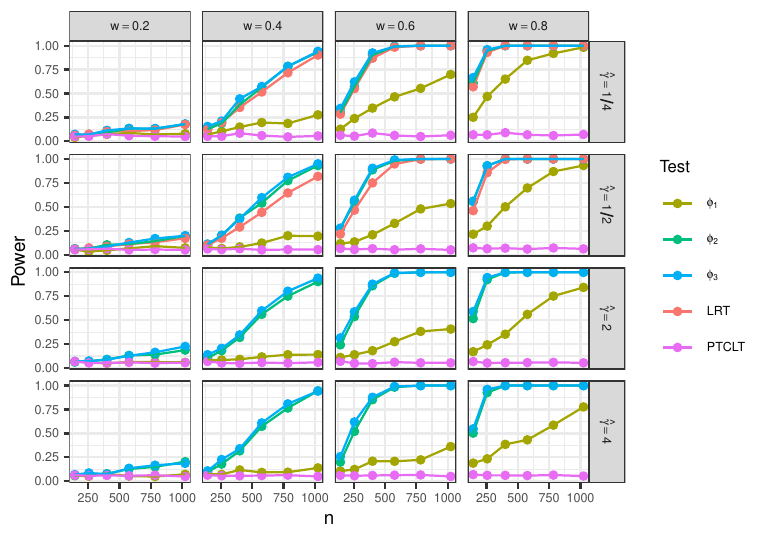}
    \caption{Empirical power of the separability tests under $N_{p_1\times p_2}(0,C_{2,w})$ as functions of $n$ across each $w=0.2,0.4,0.6,0.8$, and $\hat{\gamma}:=\hat{\gamma}_1\hat{\gamma}_2=p_1p_2/n$. For the notations of $\phi_1,\phi_2,\phi_3$, \texttt{LRT}, and \texttt{PTCLT}, see Figure \ref{sec5.2.figure1}. \texttt{LRT} is not present when $\hat{\gamma}>1$ as in Figure \ref{sec5.2.figure1}.}
    \label{sec5.2.figure2}
\end{figure}

To better illustrate the trend of power by shrinkage $w$, Figures \ref{sec5.2.figure3}--\ref{sec5.2.figure4} also present the empirical power as functions of $w$ with $n=256$ for each $C_{1,w}$ and $C_{2,w}$, respectively, across different values of $(\hat{\gamma}_1,\hat{\gamma}_2)$. While similar trends are observed in Figures \ref{sec5.2.figure3}--\ref{sec5.2.figure4} as in Figures \ref{sec5.2.figure1}--\ref{sec5.2.figure2}, it is clearer that the power of the tests other than \texttt{PTCLT} quickly reach $1$ even with moderately small $w$ for $C_{1,w}$ in Figure \ref{sec5.2.figure3}. In contrast, $w$ should be large enough to see this for $C_{2,w}$ in Figure \ref{sec5.2.figure4}. This implies that the power of the tests may be sensitive to the strength of the signal in the core component, as quantified by $||C-I_p||_2$. 

\begin{figure}[!ht]
    \centering
    \includegraphics{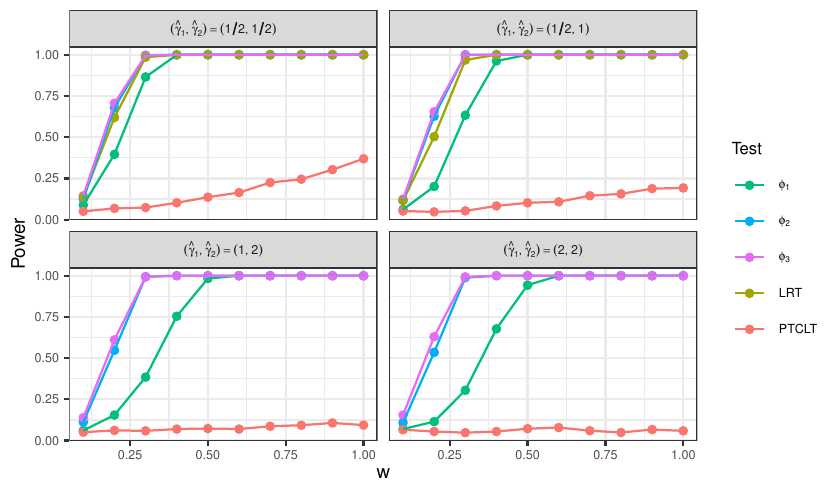}
    \caption{Empirical power of the separability tests under $N_{p_1\times p_2}(0,C_{1,w})$ as functions of $w$ across $(\hat{\gamma}_1,\hat{\gamma}_2)$.  Here $n=256$ and $(p_1,p_2)$ is determined according to the value of $(\hat{\gamma}_1,\hat{\gamma}_2)$. For the notations of $\phi_1,\phi_2,\phi_3$, \texttt{LRT}, and \texttt{PTCLT}, see Figure \ref{sec5.2.figure1}. \texttt{LRT} is not present when $\hat{\gamma}>1$ as in Figure \ref{sec5.2.figure1}.}
    \label{sec5.2.figure3}
\end{figure}

\begin{figure}[!ht]
    \centering
    \includegraphics{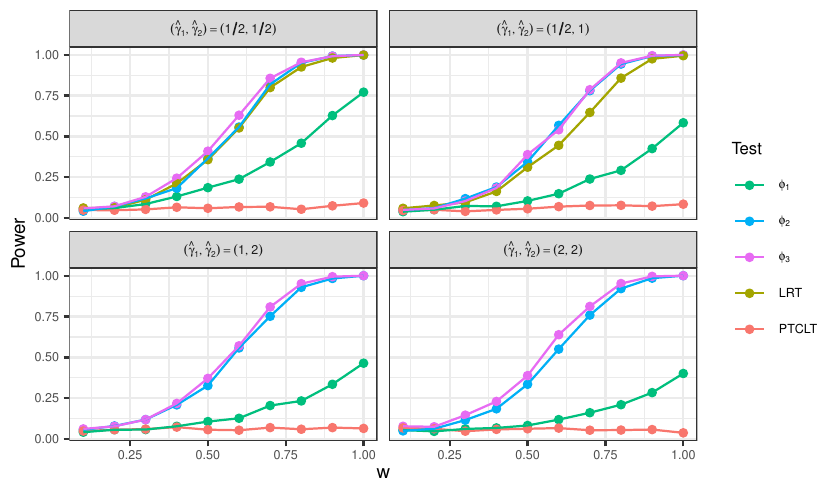}
    \caption{Empirical power of the separability tests under $N_{p_1\times p_2}(0,C_{2,w})$ as functions of $w$ across $(\hat{\gamma}_1,\hat{\gamma}_2)$. Here $n=256$ and $(p_1,p_2)$ is determined according to the value of $(\hat{\gamma}_1,\hat{\gamma}_2)$. For the notations of $\phi_1,\phi_2,\phi_3$, \texttt{LRT}, and \texttt{PTCLT}, see Figure \ref{sec5.2.figure1}. \texttt{LRT} is not present when $\hat{\gamma}>1$ as in Figure \ref{sec5.2.figure1}.}
    \label{sec5.2.figure4}
\end{figure}

From Figures \ref{sec5.2.figure1}--\ref{sec5.2.figure4}, we clearly observe that the powers of $\phi_2$ and $\phi_3$ are comparable. Hence, we examine whether the empirical power of $\phi_2$ resembles the asymptotic power of ELRT (\texttt{Asymp}) in Corollary 2.4 of \cite{wang2021} for large $n$. Specifically, we assess the empirical power of these two tests under $N_{p_1\times p_2}(0,C)$, where $C$ exhibits a partial-isotropy rank$-r$ structure with $(p_1,p_2,r)=((k+1)m,km,2)$ for some $k,m\in\bbN$ or $(p_1,p_2,r)=(p_2r,p_2,r)$. To this end, we randomly generate $C$ using the result of Lemma \ref{sec4.1.lemma1} and Theorem \ref{sec4.1.thm1}, and run $1000$ Monte Carlo simulations as before. To compute \texttt{Asymp}, recall that \cite{wang2021} assumed that $\Sigma$ has a rank$-r$ partial isotropic structure, with both $r$ and the values of spike eigenvalues fixed in $n$, and the non-spike eigenvalues all equal to $1$. However, their results remain invariant as long as the ratio of the value of each spike eigenvalue to the non-spike eigenvalue is fixed for all $r$ spikes. To align with this setting, we shall take the non-spiked eigenvalue $\lambda$ in Corollary \ref{sec4.1.cor1} as $1/(1+rc/p_1p_2)$ for some constant $c>0$ and $(p_1,p_2,r)$ above. By Corollary \ref{sec4.1.cor1}, we have that $\lambda_i(C)/\lambda=1+c$ for $i=1,\ldots,r$. Then \texttt{Asymp} can be computed by setting $a_1,\ldots,a_r$ as $1+c$ with $k=r$ in Corollary $1$ of \cite{wang2021}.

Tables \ref{sec5.2.table1}--\ref{sec5.2.table2} report the empirical power of $\phi_2$ and $\phi_3$, and \texttt{Asymp} for $C$ of rank$-2$ and rank$-6$ partial isotropic structures, respectively, across various values of $(p_1,p_2,n)$, $\hat{\gamma}=p_1p_2/n$, and $c$. The power of $\phi_2$ and $\phi_3$ are comparable when the number of spikes or $c$ is small, but differences between them become more evident as either increases. However, they are both generally much smaller than \texttt{Asymp}. This may suggest that the convergence rates of the asymptotic distributions of $T_2$ and $T_3$ may be slow. Moreover, since $T_2$ and $T_3$ are based on $\hat{C}=c(S)$ not $S$ as in Corollary $2.4$ of \cite{wang2021}, this may induce an additional error in the power analysis of \cite{wang2021}, which may align with empirical studies in Section \ref{sec5.1:null.simul}.
\begin{table}[!ht]
    \centering \footnotesize
    \begin{tabular}{c ccccc ccccc}
    \hline 
      \multirow{5}{*}{$c$}   & \multicolumn{5}{c}{$\hat{\gamma}=1/2$}  & \multicolumn{5}{c}{$\hat{\gamma}=2$} \\ \cmidrule(r){2-6} \cmidrule{7-11}
       & \multirow{3}{*}{\texttt{Asymp}}  & \multicolumn{4}{c}{$(p_1,p_2,n)$}   & \multirow{3}{*}{\texttt{Asymp}}  & \multicolumn{4}{c}{$(p_1,p_2,n)$} \\ \cmidrule(r){3-6} \cmidrule{8-11}
       &  & \multicolumn{2}{c}{(20,16,640)} & \multicolumn{2}{c}{(24,20,960)}    & & \multicolumn{2}{c}{(36,30,540)} & \multicolumn{2}{c}{(40,30,600)} \\ \cmidrule(r){3-4} \cmidrule{5-6} \cmidrule(r){8-9} \cmidrule{10-11}
          &  & $\phi_2$ & $\phi_3$ & $\phi_2$ & $\phi_3$ & & $\phi_2$ & $\phi_3$ & $\phi_2$ & $\phi_3$ \\
       \hline 
   $0.2$ & $0.105$ & $0.068$ & $0.054$ & $0.068$ & $0.063$  & $0.074$ & $0.052$ & $0.048$ & $0.043$ & $0.043$ \\
   $0.3$ & $0.143$ & $0.069$ & $0.058$ & $0.082$ & $0.081$   & $0.088$ & $0.049$ & $0.054$ & $0.041$ & $0.040$ \\
   $0.4$ & $0.189$ & $0.089$ & $0.090$ & $0.079$ & $0.090$  & $0.104$ & $0.061$ & $0.050$ & $0.055$ & $0.055$ \\
   $0.5$ & $0.242$ & $0.121$ &  $0.118$& $0.105$ & $0.123$   & $0.122$ & $0.049$ & $0.049$ & $0.052$& $0.055$  \\ \hline
    \end{tabular}
    \caption{The comparison of the empirical power of $\phi_2$ and $\phi_3$, and \texttt{Asymp} when the population core covariance matrix exhibits a rank$-2$ partial isotropic structure with $\lambda=1/(1+2c/p_1p_2)$ and $(p_1,p_2)$ as in the second item of Theorem \ref{sec4.1.thm1}. The values of $(p_1,p_2,n)$ are chosen to satisfy $\hat{\gamma}=p_1p_2/n$ for a given $\hat{\gamma}$. Here $\phi_2$ and $\phi_3$ are the tests proposed in this article.}
    \label{sec5.2.table1}
\end{table}

\begin{table}[!ht]
    \centering \footnotesize
    \begin{tabular}{c ccccc ccccc}
    \hline 
      \multirow{5}{*}{$c$}   & \multicolumn{5}{c}{$\hat{\gamma}=1/2$}  & \multicolumn{5}{c}{$\hat{\gamma}=2$} \\ \cmidrule(r){2-6} \cmidrule{7-11}
       & \multirow{3}{*}{\texttt{Asymp}}  & \multicolumn{4}{c}{$(p_1,p_2,n)$}   & \multirow{3}{*}{\texttt{Asymp}}  & \multicolumn{4}{c}{$(p_1,p_2,n)$} \\ \cmidrule(r){3-6} \cmidrule{8-11}
       &  & \multicolumn{2}{c}{(36,6,432)} & \multicolumn{2}{c}{(60,10,1200)}    & & \multicolumn{2}{c}{(72,12,432)} & \multicolumn{2}{c}{(84,14,588)} \\ \cmidrule(r){3-4} \cmidrule{5-6} \cmidrule(r){8-9} \cmidrule{10-11}
          &  & $\phi_2$ & $\phi_3$ & $\phi_2$ & $\phi_3$ & & $\phi_2$ & $\phi_3$ & $\phi_2$ & $\phi_3$ \\
       \hline 
   $0.2$ & $1.000$ & $0.093$ &  $0.096$ & $0.072$ & $0.068$ & $0.984$ & $0.057$ & $0.063$ & $0.047$ & $0.056$ \\
   $0.3$ & $1.000$  & $0.121$ & $0.127$  & $0.118$ & $0.122$ & $1.000$ & $0.062$ & $0.065$ & $0.055$ & $0.068$  \\
   $0.4$ &$1.000$   &  $0.222$ &  $0.246$ & $0.193$ & $0.222$  & $1.000$ & $0.076$ & $0.079$ & $0.064$ &  $0.081$ \\
   $0.5$ & $1.000$ & $0.330$  & $0.401$ & $0.309$ & $0.380$ & $1.000$ & $0.093$ & $0.100$ & $0.083$ & $0.110$  \\  \hline
    \end{tabular}
    \caption{The comparison of the empirical power of $\phi_2$ and $\phi_3$, and \texttt{Asymp} when the population core covariance matrix exhibits a rank$-6$ partial isotropic structure with $\lambda=1/(1+6c/p_1p_2)$ and $(p_1,p_2)$ as in the third item of Theorem \ref{sec4.1.thm1} for $r=6$. The values of $(p_1,p_2,n)$ are chosen to satisfy $\hat{\gamma}=p_1p_2/n$ for a given $\hat{\gamma}$. For the notations of $\phi_2$ and $\phi_3$, see Table \ref{sec5.2.table1}. }
    \label{sec5.2.table2}
\end{table}

\section{Discussion}\label{sec6:disc}
We introduced three Kronecker-invariant test statistics to test the separability of a population covariance matrix $\Sigma$. These statistics are well-defined in high-dimensional settings where $p>n$, and their distributions depend on $\Sigma$ only through its core component $C=c(\Sigma)$. Under the null hypothesis of separability, the core component reduces to $I_p$, rendering their null distributions free of any unknown parameters, referred to as Kronecker-invariance. Therefore, the level of the associated tests can be controlled exactly for any given $\alpha\in(0,1)$. As illustrated through numerical studies, these tests exhibit power higher than the competitors, such as the test by \cite{aston2017} and the classical LRT, even in high-dimensional regimes. In particular, the tests based on the ELRT and separable covariance expansion approach the power $1$ even with moderately large $(n,p_1,p_2)$. 

The test based on the largest eigenvalue of the core $\hat{C}$ of the sample covariance matrix $S$ is theoretically supported by its asymptotic distributions under the null hypothesis of separability and local alternative regimes. Also, we established its consistency under the regime where $C$ exhibits a partial isotropic structure. These results are achieved by the asymptotic spectral equivalence between $S$ and $\hat{C}$ when $k(\Sigma)=I_p$ due to Kronecker-invariance. This equivalence comes from the consistency of the Kronecker MLE $\hat{K}=k(S)$ for any $k(\Sigma)$. On the other hand, the test statistic based on the separable expansion of $\hat{C}$ is shown to converge to some limit in probability under the same regimes. While the empirical evidence suggests that consistency holds for the tests based on this statistic and the ELRT, the asymptotic results, such as the consistency and asymptotic distributions, for these tests seem to be challenging. We leave their theoretical proofs as a future direction.  

The separability tests proposed in this article can be naturally extended to test the
separability of the covariance array for tensor data. For example, using the negative log-likelihood of an array normal distribution as an objective function, one can define the separable component of the covariance array, which in turn induces the core component through the Tucker product \cite{hoff2011,manceur2013a,derksen2022}. As an analogy to the Kronecker-invariance notion in this article, the induced core component of the sample covariance array has a distribution that is independent of the separable component of the population covariance array. Thus, one can construct the test statistic based on this core component, thereby inducing a Kronecker-invariant test. To examine separability along specific modes of the data array, one may collapse the remaining indices and then apply the test discussed above (see \cite{gerard2016} also). 

The \texttt{R} package for implementing the tests proposed in this article is available as \texttt{kro.inv.test} \cite{sung2026}.\footnote{The package is also available at \url{https://github.com/Seungbongjung/kro.inv.test}.}
 
\bibliographystyle{plain}
\bibliography{sep_test} 

\section*{Appendix}

\appendix
\renewcommand\thefigure{\thesection.\arabic{figure}} 
\renewcommand\thetable{\thesection.\arabic{table}} 

\counterwithin{cor}{section} 
\counterwithin{cond}{section}
\counterwithin{lemma}{section} 
\counterwithin{cex}{section} 
\counterwithin{prop}{section} 
\makeatletter
\renewenvironment{proof}[1][\proofname.]{%
  \par\pushQED{\qed}%
  \normalfont \topsep6\p@\@plus6\p@\relax
  \trivlist
  \item[\hskip\labelsep
        \textbf{\upshape #1}]\ignorespaces
}{%
  \popQED\endtrivlist\@endpefalse
}
\makeatother

In Appendix, we provide the proofs of the theoretical results and additional theoretical results in the main text (Section \ref{sec.A:proof}) and miscellaneous figures and tables supporting the results and discussions in the main text (Section \ref{sec.B:misc}). In addition to the proofs of the theoretical results in the main text, Section \ref{sec.A:proof} provides additional results on the singular values of $\calR(\hat{C})$ and $\calR(C)$ for some $C$ studied in Theorem \ref{sec4.1.thm1}. It also presents a result on the invariance of the distribution of $\hat{C}$ when $\calH=\calL_{p_1,p_2}^+$, along with a counterexample showing that $\hat{C}$ may fail to be maximal-invariant under the action of $\calH=\calL_{p_1,p_2}^+$ when $n<p$. Additional simulation studies for non-Gaussian populations are provided in Section \ref{sec.C:add}. The approximated $q_{1-\alpha}$ values for the tests $\phi_1,\phi_2$ and $\phi_3$, based on Monte Carlo simulations under Gaussian populations for various values of $(p_1,p_2,n)$, are provided for $\alpha=0.05$ in Section \ref{sec.D:cutoff}. 

\section{Proofs of the Theoretical Results}\label{sec.A:proof}

\subsection{Proofs of the Results from Section \ref{sec3:test}}\label{secA.1}

We prove Proposition \ref{sec3.prop1}--\ref{sec3.3.prop2}.

\begin{proof}[\textbf{Proof of Proposition \ref{sec3.prop1}.}]
We first prove the result under the condition (\rom{1}). Note that the square root $K^{1/2}$ of $K$ is fixed as either symmetric square root $(\calS_{p_1,p_2}^+$) or Cholesky factor ($\calL_{p_1,p_2}^+$) for the unique and identifiable KCD. For simplicity, write $C^{1/2}\equiv UDV^\top$ for fixed $UDV^\top$. It suffices to show that the eigenvalues of $\hat{C}$ do depend on the value of $K$. Let $\tilde{C}=C^{1/2}1/n\sum_{i=1}^n z_iz_i^\top C^{1/2,\top}$. Since $S=K^{1/2}\tilde{C}K^{1/2,\top}$ and $K^{1/2}\in GL_{p_1,p_2}$, the equivariance of the Kronecker map $k$ implies that  
\begin{align*}
        \hat{K}=k(S)&=K^{1/2}k\parentheses{C^{1/2}1/n\sum_{i=1}^n z_iz_i^\top C^{1/2,\top}}K^{1/2,\top}\\
    &:=K^{1/2}k(\tilde{C})K^{1/2,\top}.
\end{align*}
    Let $\tilde{K}=k(\tilde{C})$ and suppose $\tilde{K}^{1/2}$ is the square root of $\tilde{K}$, allowing it to be either symmetric square root $(\calS_{p_1,p_2}^+$) or Cholesky factor ($\calL_{p_1,p_2}^+$). Note that the choice of $\tilde{K}^{1/2}$ may differ from that of $K^{1/2}$. Then any square root of $\hat{K}$ can be represented as $\hat{K}^{1/2}=K^{1/2}\tilde{K}^{1/2}O'$ for some $O'\in \calO_{p_1,p_2}$. Hence,
\begin{align}\label{prop1.eq2}
\begin{split}
        \hat{C}=\hat{K}^{-1/2}S \hat{K}^{-1/2,\top}&=O^{',\top} \tilde{K}^{-1/2} K^{-1/2} K^{1/2} \tilde{C}K^{1/2}K^{-1/2,\top}\tilde{K}^{-1/2,\top} O^{'}\\
    &=O^{',\top} \tilde{K}^{-1/2} \tilde{C}\tilde{K}^{-1/2,\top} O^{'}.
\end{split}
\end{align}
Note that $\tilde{K}$ depends only on $\tilde{C}$, which does not depend on $K$, and that the eigenvalues are orthogonally invariant. Also, for any fixed square root $\bar{K}^{1/2}$ of $\tilde{K}$, we can write $\tilde{K}^{1/2}=\bar{K}^{1/2}\tilde{O}$ for some $\tilde{O}\in \calO_{p_1,p_2}$, leading to
\begin{align*}
    \hat{C}=O^{',\top}\tilde{O}^\top \bar{K}^{-1/2}\tilde{C} \bar{K}^{-1/2,\top}\tilde{O}\tilde{O}'.
\end{align*}
Since $\calO_{p_1,p_2}$ is a group, $\tilde{O}O'\in\calO_{p_1,p_2}$. Therefore, the eigenvalues of $\hat{C}$ do not depend on $K$, and so is the ESD of $\hat{C}$,  $F_{\hat{C}}$.\\

To prove the result under (\rom{2}), note that for any fixed (but possibly differently defined) square roots $\check{K}^{1/2}$ and $\check{C}^{1/2}$ of $K$ and $C$, respectively, there exists $O\in\calO_p$ such that 
\begin{align*}
    K^{1/2}(UDV^\top)=\check{K}^{1/2}\check{C}^{1/2}O
\end{align*}
as both $K^{1/2}(UDV^\top)$ and $\check{K}^{1/2}\check{C}^{1/2}$ are square roots of $\Sigma$ for $K^{1/2}$ and $UDV^\top$ in the model (\ref{sec3.eq1}). Thus,
\begin{align*}
    y_i\overset{d}{\equiv}  K^{1/2}(UDV^\top)z_i= \check{K}^{1/2}\check{C}^{1/2}Oz_i\overset{d}{\equiv} \check{K}^{1/2}\check{C}^{1/2}z_i,
\end{align*}
due to the orthogonal invariance of $z_i$. Hence, applying the same argument for the proof under (\rom{1}), the invariance of the ESD of $\hat{C}$ with respect to $K$ holds without a specification of $K^{1/2}$ and $UDV^\top$ as in (\rom{1}). 
\end{proof}

\begin{proof}[\textbf{Proof of Proposition \ref{sec3.1.prop1}.}]
From the proof of Proposition $3$ of \cite{hoff2023a}, one can deduce that $C$ is a core covariance matrix, possibly positive semidefinite matrix, if and only if for a random matrix $Y\in\real^{p_1\times p_2}$ with $\bbE[Y]=0$ and $V[Y]=C$,
\begin{align*}
    \bbE[YY^\top]=p_2 I_{p_1}, \quad \bbE[Y^\top Y]=p_1 I_{p_2}.
\end{align*}
Suppose $(C_{[i,j]})$ is a partition of $C$ as in Definition \ref{sec2.3.def1}. Also, if $Y=[y_1,\ldots,y_{p_2}]$ for $y_i\in\real^{p_1}$, the assumption that $\bbE[Y]=0$ and $V[Y]=C$ implies $\bbE[y_iy_j^\top]=C_{[i,j]}$. Thus, 
\begin{align*}
    \bbE[YY^\top]=\sum_{i=1}^{p_2}\bbE[y_iy_i^\top]=\sum_{i=1}^{p_2}C_{[i,i]}\equiv \tr_1(C)=p_2I_{p_1}.
\end{align*}
Also, for $a,b\in[p_2]$,
\begin{align*}
    \parentheses{\bbE[YY^\top]}_{ab}=\bbE[y_a^\top y_b]=\tr\parentheses{\bbE[y_ay_b^\top]}=\tr(C_{[a,b]}).
\end{align*}
Hence, 
\begin{align*}
    \bbE[YY^\top]\equiv \tr_2(C)=p_1 I_{p_2}.
\end{align*}

\end{proof}

To prove Proposition \ref{sec3.3.prop1}, let $\textsf{mat}_{p_1\times p_2}(u)$ be the reshape of $u\in \real^{p_1\times p_2}$ into a $p_1\times p_2$ matrix. We introduce the following ancillary results.
\begin{lemma}\label{secA.1.lemma1}
Let $M\in\real^{p_1p_2\times p_1p_2}$ and consider its block partition as
\begin{align*}
    M=\left[\begin{array}{cccc}
    M_{[1,1]} & M_{[1,2]} & \cdots & M_{[1,p_2]}\\
      M_{[2,1]} & M_{[2,2]} & \cdots & M_{[2,p_2]}\\
      \vdots & \vdots & \ddots & \vdots \\
    M_{[p_2,1]} & M_{[p_2,2]} & \cdots & M_{[p_2,p_2]}
    \end{array}\right],
\end{align*}
where each block is of dimension $p_1\times p_1$. Recall the rearrangement $\calR:\real^{p_1p_2\times p_1p_2}\rightarrow\real^{p_2^2\times p_1^2}$, whose $((j-1)p_1+1)+i$ row of $\calR(M)$ is the vectorization of $M_{[i,j]}$. Let $a\in\real^{p_1p_2}$ and $A=\textsf{mat}_{p_1\times p_2}(a)$. 
\begin{align*}
    \calR(aa^\top)=A^\top \otimes A^\top.
\end{align*}
\end{lemma}
\begin{proof}
Partition $a$ as $[a_1^\top,\ldots,a_{p_2}^\top]^\top$ where each partition is a $p_1-$dimensional vector. Then $[i,j]$ block of $aa^\top$ is clearly $a_ia_j^\top$, whose vectorization is $a_j\otimes a_i$. On the other hand, the $((j-1)p_1+i)$-th row of $A^\top \otimes A^\top $ is a Kronecker product of $j$th row and $i$th row of $A^\top$, which are $a_j$ and $a_i$, proving the claim.
\end{proof}

\begin{lemma}\label{secA.1.lemma2}
Let $r\in\bbN$ for which a rank$-r$ core exists given $(p_1,p_2)$. Suppose $A=[\text{vec}(A_1),\ldots,\text{vec}(A_r)]\in\real^{p\times r}$ of full-column rank and $A_i\in\real^{p_1\times p_2}$. Then $AA^\top$ is a rank$-r$ core if and only if 
\begin{align*}
  \sum_{i=1}^r A_iA_i^\top=p_2I_{p_1},\quad \sum_{i=1}^r A_i^\top A_i=p_1I_{p_2}.
\end{align*}
\end{lemma}
\begin{proof}
    This is a direct consequence of Proposition \ref{sec3.1.prop1}.
\end{proof}

\begin{proof}[\textbf{Proof of Proposition \ref{sec3.3.prop1}.}]
 Given $(p_1,p_2)$, let $r$ and $A$ be those in Lemma \ref{secA.1.lemma2}. From the result of Lemma \ref{secA.1.lemma1}, the linearity of $\calR$ implies that 
 \begin{align*}
     \calR(AA^\top)=\sum_{i=1}^r A_i^\top\otimes A_i^\top=\parentheses{\sum_{i=1}^r A_i\otimes A_i}^\top.
 \end{align*}
By Lemma $3.6$ of \cite{kwok2021} with $\epsilon=0$, Lemma \ref{secA.1.lemma2} implies that $\sigma_1(\calR(AA^\top))=\sqrt{p_1p_2}$ (see p.$1037$ of \cite{kwok2021} also).
\end{proof}

\begin{proof}[\textbf{Proof of Proposition \ref{sec3.3.prop2}.}]
As an analogy to the proof of Proposition \ref{sec3.prop1}, it suffices to show that the singular values of $\calR(\hat{C})$ do not depend on the value of $K$. As in the proof of Proposition \ref{sec3.prop1}, we prove the result under the condition (\rom{1}), since the result under the assumption (\rom{2}) follows from this. Using the notations in the proof of Proposition \ref{sec3.prop1}, recall that 
    \begin{align*}
        \hat{C}=(\tilde{O}_2\otimes \tilde{O}_1)^\top \tilde{K}^{-1/2}\tilde{C}\tilde{K}^{-1/2,\top}(\tilde{O}_2\otimes \tilde{O}_1),
    \end{align*}
    where $O'=\tilde{O}_2\otimes \tilde{O}_1\in\calO_{p_1,p_2}$. Suppose $\tilde{K}^{-1/2}\tilde{C}\tilde{K}^{-1/2,\top}$ has a separable expansion as  $\sum_{i=1}^{\tilde{R}} \tilde{A}_i\otimes \tilde{B}_i$ for some $\tilde{R}\leq p_1^2\wedge p_2^2$, $\set{\tilde{A}_i}_{i=1}^{\tilde{R}}\subseteq \real^{p_2\times p_2}$, and $\set{\tilde{B}_i}_{i=1}^{\tilde{R}}\subseteq \real^{p_1\times p_1}$. Then by vec-Kronecker identity,
    \begin{align*}
           \calR(\hat{C})&=\calR\parentheses{\sum_{i=1}^{\tilde{R}} (\tilde{O}_2\tilde{A}_i\tilde{O}_2^\top)\otimes (\tilde{O}_1\tilde{B}_i\tilde{O}_1^\top)}\\
        &=\sum_{i=1}^{\tilde{R}} \text{vec}(\tilde{O}_1\tilde{B}_i\tilde{O}_1^\top)\text{vec}(\tilde{O}_2\tilde{A}_i\tilde{O}_2^\top)^\top\\
        &=\sum_{i=1}^{\tilde{R}} (\tilde{O}_1\otimes \tilde{O}_1)\text{vec}(\tilde{B}_i)\text{vec}(\tilde{A}_i)^\top (\tilde{O}_2\otimes \tilde{O}_2)^\top\\
        &=(\tilde{O}_1\otimes \tilde{O}_1)\calR(\tilde{K}^{-1/2}\tilde{C}\tilde{K}^{-1/2,\top})(\tilde{O}_2\otimes \tilde{O}_2)^\top.
    \end{align*}
    Since $\tilde{O}_i\otimes \tilde{O}_i\in\calO_{p_i^2}$ for $i=1,2$ and the singular values do not alter by rotating left and right singular vectors, the above shows that the singular values of $\calR(\hat{C})$ and those of $\calR(\tilde{K}^{-1/2}\tilde{C}\tilde{K}^{-1/2,\top})$ coincide. Because the distribution of $\tilde{K}^{-1/2}\tilde{C}\tilde{K}^{-1/2,\top}$ does not depend on $K$, this proves the desired claim.
\end{proof}

\subsection{Proofs of the Results from Section \ref{sec4.1}}\label{secA.2}
We prove Lemma \ref{sec4.1.lemma2}, Theorem \ref{sec4.1.thm1}, and Corollary \ref{sec4.1.cor1}.
\begin{proof}[\textbf{Proof of Lemma \ref{sec4.1.lemma2}.}]
Note that the proof of if part is straightforward. Hence, we verify the only if part. We assume that $d=\text{gcd}(p_1,p_2)=1$ without loss of generality as $p_1^2+p_2^2-rp_1p_2=0,d^2$ if and only if $(p_1/d)^2+(p_2/d)^2-r(p_1/d)(p_2/d)=0,1$, and $p_1/d,p_2/d\in\bbN$. Also, recall that we assume $p_1\geq p_2$ and write $a|b$ if $a$ divides $b$ for integers $a\geq 1$ and $b\geq 0$.\\ 

To prove the first item, note that if $p_1^2+p_2^2-rp_1p_2=0$, then $p_1p_2$ should divide $p_1^2+p_2^2$. Since $p_1|p_1^2$ and $p_2|p_2^2$, this implies that $p_1|p_2^2$ and $p_2|p_1^2$. If $p_1>1$, then $p_1$ has at least one prime factor, which will also divide $p_2^2$ and so $p_2$, which contradicts the assumption that $d=1$. Hence, $p_1=1$ and similarly, $p_2=1$, where $r=2$. Multiplying $p_1=1$ and $p_2=1$ with any integer $m$, we have $(p_1,p_2,r)=(m,m,2)$ satisfies $p_1^2+p_2^2-rp_1p_2=0$. \\

It remains to prove the second item. Note that for the sequence $\set{x_n}_{n=1}^\infty$ defined in the second item, $(x_2,x_1)=(r,1)$ and $(x_3,x_2)=(r^2-1,r)$. If there exists an integer $r$ such that $p_1^2+p_2^2-rp_1p_2=1$, then $\text{gcd}(p_1,p_2)=1$ is always true by B\'{e}zout's identity  as $p_1\cdot p_1+(p_2-rp_1)\cdot p_2=1$. Also, we have that $p_1p_2|p_1^2+p_2^2-1$. If $p_2=1$, then this is always satisfied for any $p_1$ with $r=p_1$, i.e., $(p_1,p_2)=(r,1)$, and thus any multiple of $(r,1)$ as a valid choice for $(p_1,p_2)$.\\

Otherwise, $p_2\geq 2$. Note that $p_1p_2|p_1^2+p_2^2-1$ implies that $p_1|p_2^2-1$ and $p_2|p_1^2-1$, as $p_1|p_1^2$ and $p_2|p_2^2$. Suppose $p_1=p_2$. Then $p_2|p_2^2-1$ only when $p_2=1$, which is a contradiction. Hence, $p_1\geq p_2+1$. Also, since $p_1|p_2^2-1$, then $p_1\leq p_2^2-1$. Thus, $p_2+1\leq p_1\leq p_2^2-1$. If $p_1=p_2+1$, $r=2$. If $r=2$, note that the quadratic equation $x^2-2x_{n-1}x+x_{n-1}^2-1=0$ yields $x_n=x_{n-1}+1$. Thus, with $r=2$, $(p_1,p_2)=(m+1,m)$ for some $m\in\bbN$. Again, any multiple of $(p_1,p_2)$ is a valid choice.\\ 

The remaining case is $p_2+1<p_1\leq p_2^2-1$ with $p_2\geq 3$ as $p_2+1<p_2^2-1$. We shall adopt Vieta jumping argument to prove the desired claim. Fix $p_2\geq 3$ and $r$ such that $p_1^2+p_2^2-rp_1p_2=1$. Consider the quadratic equation $x^2-rp_2x+p_2^2-1=0$. We have one solution as $p_1$. If $y_1$ is another solution, then we have $y_1+p_1=kp_2$, implying that $xy_1$ is also an integer as $p_1,k,p_2$ are integers. Moreover, since $y_1p_1=p_2^2-1$ and $p_2+1<p_1\leq p_2^2-1$, we have that $1\leq y_1<p_2-1$ so that $y_1+1<p_2\leq y_1^2-1$. Hence, we have found other pair of positive integers $(p_2,y_1)$ such that $r=(p_2^2+y_1^2-1)/(y_1p_2)$. Now consider the quadratic equation that $x^2-ry_1x+y_1^2-1=0$. Again, we have one solution as $p_2$. By the same argument, we find the other integer solution $0\leq y_2=(y_1^2-1)/p_2<y_1-1$ as $1\leq y_1<p_2-1$. Thus, we found the other solution $(y_1,y_2)$. Continuing this procedure, we can generate a sequence of pair of positive integers $(y_n,y_{n+1})$ such that $y_{n+1}<y_n-1$ with $r=(y_n^2+y_{n+1}^2-1)/y_ny_{n+1}$. Nevertheless, since we should only have positive integers, this sequence should terminate within finite procedures. This can happen only when $y_{n+1}=0$ given $y_n>0$. Indeed, if $y_n>1$, then we find another $0\leq y_{n+1}<y_n-1$. If $y_n=1$, then we have the quadratic equation that $x^2-rx=0$, leading to $y_{n+1}=0$. Reversing the procedure above, since $r$ was fixed, if $x_0=0$, the quadratic equation $x^2-rx_{n-1} x+x_{n-1}^2-1=0$ would imply that $x_1=1$ as $x_1$ is the largest soultion of this equation. And $x_2=r$, $x_3=r^2-1$, $x_4=r^3-2r$, and so. $(p_1,p_2)$ can be any multiple of $(x_{n+1},x_n)$ as long as $x_{n+1}\geq x_n\geq 1$. For example, if $r=1$, $x_3=0$. Thus, it should be that $r\geq 2$ if $(p_1,p_2)$ is a multiple of $(r^2-1,r)$.

\end{proof}

To prove Theorem \ref{sec4.1.thm1} and Corollary \ref{sec4.1.cor1}, we will use the result of Lemma \ref{secA.1.lemma2}, which follows from Proposition \ref{sec3.1.prop1}. We will also need the following ancillary results. 

\begin{lemma}\label{secA.2.lemma1}
  Suppose $M\in\real^{p\times p}$ is symmetric with an eigendecomposition $\Gamma\Lambda\Gamma^\top$, where $\Gamma\in \calO_p$ and $\Lambda=\text{diag}(\lambda_1,\ldots,\lambda_p)$ with $\lambda_i$'s in descending order. Partition $I_p$ as $I_p=[V_1,\ldots,V_k]$, where $V_i\in\real^{p\times n_i}$.  If $D=\text{diag}\parentheses{\underbrace{d_1,\ldots,d_1}_{n_1},\ldots,\underbrace{d_k,\ldots,d_k}_{n_k}}$, where $d_i$'s are distinct and arranged in ascending order, $M=D$ if and only if $(\lambda_1,\ldots,\lambda_p)=\parentheses{\underbrace{d_k,\ldots,d_k}_{n_k},\ldots,\underbrace{d_1,\ldots,d_1}_{n_1}},$
  \begin{align*}
      \Gamma&=[V_k,\ldots,V_k]\bigoplus_{i=1}^k U_{k+1-i},\\
  \end{align*}
  where $U_i\in\calO_{n_i}$ and $\bigoplus$ denotes the direct sum.
\end{lemma}
\begin{proof}
    The result follows from the fact that the eigendecomposition is unique up to the rotation of the orthonormal basis of the eigenspace corresponding to each distinct eigenvalue.
\end{proof}

\begin{lemma}\label{secA.2.lemma2}
    For $p_1,p_2\in\mathbb{N}$, suppose $p_2\leq p_1\leq 2p_2$. Define the equivalence relation $\sim$ on $\real_{+}^{p_1}$ by $a\sim b$ if and only if there exists a permutation $\pi:[p_1]\mapsto [p_1]$ such that $\pi(a)=(a_{\pi(1)},\ldots,a_{\pi(p_1)})^\top=b$. Define the vectors $u,v\in\real_{+}^{p_1}$ by 
    \begin{align*}
        u&=(1-w_1,\ldots,1-w_{p_2},\underbrace{0,\ldots,0}_{p_1-p_2})^\top,\\
        v&=(p_2/p_1-w_1,\ldots,p_2/p_1-w_{p_2},\underbrace{p_2/p_1,\ldots,p_2/p_1}_{p_1-p_2})^\top,
    \end{align*}
    for $w\in\real_+^{p_2}$. If $p_1>p_2$ and $p_1-p_2|p_1$, take any $w$ such that 
    \begin{align*}
        w\sim (\underbrace{1,\ldots,1}_{m},\ldots,\underbrace{k,\ldots,k}_{m})^\top/(k+1)
    \end{align*}
    for which $p_1=(k+1)m$ and $p_2=km$ for some $k,m\in\mathbb{N}$. Alternatively, if $p_1=p_2$, take any $w$ such that $\max_i w_i\leq 1$. Then $u\sim v$ if and only if $p_1=p_2$ or $p_1>p_2$ and $p_1-p_2|p_2$, and $w$ is chosen accordingly as described.
\end{lemma}
\begin{proof}
    We first prove if part. If $p_1=p_2$, then $u=v$ and $u\in \real_+^{p_1}$ as long as $\max_iw_i\leq 1$. Otherwise, $p_1>p_2$ and $p_1-p_2|p_2$. Then there exists $k\in\mathbb{N}$ such that $p_2=(p_1-p_2)k$ or $p_2=k/(k+1)p_1$. Since $k+1$ cannot divide $k$, $k+1$ should divide $p_1$. Thus, there exists $m\in\mathbb{N}$ such that $p_1=(k+1)m$ and so $p_2=km$. Then 
    \begin{align*}
        u&=(1-w_1,\ldots,1-w_{km},\underbrace{0,\ldots,0}_m)^\top=(\bfu_1^\top,\ldots,\bfu_{k+1}^\top)^\top,\\
        v&=(k/(k+1)-w_1,\ldots,k/(k+1)-w_{km},k/(k+1),\ldots,k/(k+1))^\top=(\bfv_1^\top,\ldots,\bfv_{k+1}^\top)^\top,
    \end{align*}
    for $\bfu_i,\bfv_i\in\real_+^{m}$. Similarly partition $w$ as $w=(\bfw_1^\top,\ldots,\bfw_{k}^\top)^\top$. To find $w$ such that $u\sim v$, without loss of generality, equating $\bfu_1$ to $\bfv_{k+1}$ yields that $\bfw_1=1/(k+1)\mathbf{1}_m$. Now noting that $\bfv=(k-1)/(k+1)\mathbf{1}_m$, equate $\bfu_2$ to $\bfv_1$, and obtain $\bfw_2=2/(k+1)\mathbf{1}_m$. Repeating the procedure $\bfu_{i}=\bfv_{i-1}$ to obtain $\bfw_i$ for $i=3,\ldots,k$, we have $\bfw_{i}=i/(k+1)$. Since $\bfu_{k+1}=\bfv_{k}=\mathbf{0}_m$, $u\sim v$. \\

    It remains to prove the only if part. Since $p_2\leq p_1$, $p_2=p_1$ or $p_2<p_1$. Because the proof for the former case follows from that for if part, we assume $p_2<p_1$. Since $p_1-p_2\leq p_2$, there uniquely exist $q,r\in\mathbb{N}\cup\set{0}$ such that $p_2=(p_1-p_2)q+r$ for $q\geq 1$ and $r\in\set{0,\ldots,p_1-p_2-1}$. Rearranging the terms, $p_2=(p_1q+r)/(1+q)$. Assume $0<r$, and thus $p_1-p_2\nmid p_2$. Slightly modifying the notations in the proof of if part, define the vectors 
    \begin{align*}
        \bfu_j&=(1-w_{(j-1)(p_1-p_2)+1},\ldots,1-w_{j(p_1-p_2)})^\top,\\
        \bfv_j&=(p_2/p_1-w_{(j-1)(p_1-p_2)+1},\ldots,p_2/p_1-w_{j(p_1-p_2)+1})^\top,\\
    \end{align*}
    for $j=1,\ldots,q$. Also, let 
    \begin{align*}
        \bfu_{q+1}&=(1-w_{q(p_1-p_2)+1},\ldots,1-w_{p_2})^\top,\quad \bfv_{q+1}=(p_2/p_1-w_{q(p_1-p_2)+1},\ldots,p_2/p_1-w_{p_2})^\top,\\
        \bfu_{q+2}&=(\underbrace{0,\ldots,0}_{p_1-p_2})^\top,\quad \bfv_{q+2}=(\underbrace{p_2/p_1,\ldots,p_2/p_1}_{p_1-p_2})^\top.
    \end{align*}
Write $w=(\bfw_1^\top,\ldots,\bfw_q^\top,\bfw_{q+1}^\top)^\top$, where $\bfw_1,\ldots,\bfw_q\in\real^{p_1-p_2}$ and $\bfw_{q+1}\in\real^{r}$. Following the iterative procedure described in the proof of if part, equating $\bfu_1$ to $\bfv_{q+2}$ yields that $\bfw_1=(1-p_2/p_1)\mathbf{1}_{p_1-p_2}$. This gives $\bfv_1=(2p_2/p_1-1)\mathbf{1}_{p_1-p_2}$. If $2p_2/p_1-1=0$, $p_1-p_2=p_2$, which contradicts the assumption that $r>0$. Also, if $2p_2/p_1-1=p_2/p_1$, $p_1=p_2$, which is again a contradiction. Thus, $\bfu_1=\bfv_{q+2}$, $\bfv_1$, and $\bfu_{q+2}$ are mutually different, and so we can proceed the iterative procedure by equating $\bfu_2$ to $\bfv_1$. This gives $\bfw_2=(2p_2/p_1-2)\mathbf{1}_{p_1-p_2}$ and so $\bfv_2=(3p_2/p_1-2)\mathbf{1}_{p_1-p_2}$. Iterating this procedure for $q$ times, one obtain $\bfv_i=((i+1)p_2/p_1-i)\mathbf{1}_{p_1-p_2}$ for $i=1,\ldots,q$. Clearly, $\bfv_1,\ldots,\bfv_q$ are distinct. Also, $\bfv_i\neq 0$ since otherwise $p_1-p_2|p_2$, which again contradicts the assumption. Hence, at $q$th step of this iterative procedure, we have $2p_2/p_1-1,\ldots,(q+1)p_2/p_1-q$ as unique elements of $v$ for which values of $w$ have been assigned. On the other hand, $p_2/p_1,\ldots,qp_2/p_1-(q-1)$, are unique elements of $u$ for which values of $w$ have been assigned. So assume $1-w_{q(p_1-p_2)+1}=(q+1)p_2/p_1-q$ without loss of generality. Then $w_{q(p_1-p_2+1}=(q+1)(1-p_2/p_1)>0$, which gives $p_2/p_1-w_{q(p_1-p_2+1}=(q+2)p_2/p_1-(q+1)$. We claim that $p_2/p_1<(q+1)/(q+2)$, which yields the contradiction as $\bfv\notin \real_{+}^{p_1}$. Since $q$ is a quotient, we have $p_2-(p_1-p_2)(q+1)<0$ or $0\leq p_2/(p_1-p_2)-1=(2p_2-p_1)/(p_1-p_2)<q$. The last inequality holds because $p_1-p_2\nmid p_2$. This implies that $(q+1)/(q+2)>p_2/p_1$. Hence, we have $r=0$ and thus $p_1-p_2|p_2$. Then the desired $w$ can be found as in the proof of if part.
\end{proof}

\begin{proof}[\textbf{Proof of Theorem \ref{sec4.1.thm1}.}]
By Lemma \ref{secA.1.lemma2}, we seek to find $A_1,\ldots,A_r$ satisfying 
\begin{align}\label{sec4.1.thm1.proof.eq1}
    \sum_{i=1}^r A_iA_i^\top=p_2 I_{p_1},\quad \sum_{i=1}^r A_i^\top A_i=p_1 I_{p_2}
\end{align}
for given $(p_1,p_2,r)$. Recall that $A=[\text{vec}(A_1),\ldots,\text{vec}(A_r)]$.

\begin{itemize}
    \item $(p_1,p_2,r)=(p_1,p_1,2),((k+1)m,km,2)$: In this case, we have that $p_2\leq p_1\leq 2p_2$. For the latter case, $(p_1,p_2)=((k+1)m,km)$ for some $k,m\in\bbN$ if and only if $p_1>p_2$ and $p_1-p_2|p_1$. We shall rewrite $A_i=\sqrt{p_1}W_i$ so that (\ref{sec4.1.thm1.proof.eq1}) implies that 
\begin{align}\label{sec4.1.thm1.proof.eq2}
\begin{split}
            W_1W_1^\top+W_2W_2^\top &=p_2/p_1I_{p_1},\\
    W_1^\top W_1+W_2^\top W_2 &=I_{p_2}.
\end{split}
\end{align}

Suppose $W_i$ has a SVD $U_i\Sigma_iV_i^\top$, where $U_i\in\calO_{p_1}$, $V_i\in\calO_{p_2}$, and
\begin{align*}
    \Sigma_i=\left[\begin{array}{cccc}
    \sigma_{1,i} & & & \\
    & \ddots  & & \\
    & & \sigma_{p_2,i} & \\
    & & & 
    \end{array}\right]\in\real^{p_1\times p_2}
\end{align*}
for $\sigma_{1,i}\geq \cdots \geq \sigma_{p_2,i}\geq 0$. Let $U=U_1^\top U_2\in\calO_{p_1}$ and $V=V_1^\top V_2\in\calO_{p_2}$. Then the first equation of (\ref{sec4.1.thm1.proof.eq2}) yields that 
\begin{align}\label{sec4.1.thm1.proof.eq3}
\begin{split}
       &\text{diag}(\sigma_{1,1}^2,\ldots,\sigma_{p_2,1}^2,\underbrace{0,\ldots,0}_{p_1-p_2})+U\text{diag}(\sigma_{1,2}^2,\ldots,\sigma_{p_2,2}^2,\underbrace{0,\ldots,0}_{p_1-p_2})U^\top=p_2/p_1I_{p_1}\\
  \Rightarrow & U\text{diag}(\sigma_{1,2}^2,\ldots,\sigma_{p_2,2}^2,\underbrace{0,\ldots,0}_{p_1-p_2})U^\top=\text{diag}(p_2/p_1-\sigma_{1,1}^2,\ldots,p_2/p_1-\sigma_{p_2,1}^2,\underbrace{p_2/p_1,\ldots,p_2/p_1}_{p_1-p_2}).
\end{split}
\end{align}
Similarly, one can obtain that 
\begin{align}\label{sec4.1.thm1.proof.eq4}
    V\text{diag}(\sigma_{1,2}^2,\ldots,\sigma_{p_2,2}^2)V^\top=\text{diag}(1-\sigma_{1,1}^2,\ldots,1-\sigma_{p_2,1}^2).
\end{align}
Comparing the eigenvalues, one can deduce from (\ref{sec4.1.thm1.proof.eq3})--(\ref{sec4.1.thm1.proof.eq4}) that 
\begin{align*}
    (\sigma_{1,2}^2,\ldots,\sigma_{p_2,2}^2,\underbrace{0,\ldots,0}_{p_1-p_2})&\sim (p_2/p_1-\sigma_{1,1}^2,\ldots,p_2/p_1-\sigma_{p_2,1}^2,\underbrace{p_2/p_1,\ldots,p_2/p_1}_{p_1-p_2}),\\
    (\sigma_{1,2}^2,\ldots,\sigma_{p_2,2}^2)&\sim (1-\sigma_{1,1}^2,\ldots,1-\sigma_{p_2,1}^2),
\end{align*}
which implies
\begin{align}\label{sec4.1.thm1.proof.eq5}
        (1-\sigma_{1,1}^2,\ldots,1-\sigma_{p_2,1}^2,\underbrace{0,\ldots,0}_{p_1-p_2})&\sim (p_2/p_1-\sigma_{1,1}^2,\ldots,p_2/p_1-\sigma_{p_2,1}^2,\underbrace{p_2/p_1,\ldots,p_2/p_1}_{p_1-p_2}).
\end{align}
By Lemma \ref{secA.2.lemma2}, we should have that either $p_1=p_2$ or $p_1>p_2$ and $p_1-p_2|p_2$, which is equivalent to the existence of $k,m\in\bbN$ such that $(p_1,p_2)=((k+1)m,km)$. \\

Suppose $p_1=p_2$. Then by Lemma \ref{secA.2.lemma2}, we can take any $\sigma_{1,1}\geq \cdots\geq \sigma_{p_2,1}$ on $[0,1]$. Let $1\geq d_1>\cdots>d_k\geq 0$ be distinct values of $\sigma_{i,1}$'s with each multiplicity of $n_k$. By Lemma \ref{secA.2.lemma1}, we have that 
\begin{align*}
    (\sigma_{1,2},\ldots,\sigma_{p_2,2})=(\underbrace{\sqrt{1-d_k^2},\ldots,\sqrt{1-d_k^2}}_{n_k},\ldots,\underbrace{\sqrt{1-d_1^2},\ldots,\sqrt{1-d_1^2}}_{n_1}).
\end{align*}
Also, partitioning $I_{p_1}$ as $[M_1,\cdots,M_k]$, where each block has $n_i$ columns, there exist $Y_i,Z_i\in\calO_{n_i}$ such that 
\begin{align*}
    U&=U_1^\top U_2=[M_k,\ldots,M_1]\bigoplus_{j=1}^k Y_{k+1-j},\\
    V&=V_1^\top V_2=[M_k,\ldots,M_1]\bigoplus_{j=1}^k Z_{k+1-j}.
\end{align*}
Fixing $U_1$ and $V_1$, 
\begin{align*}
    U_2&=U_1[M_k,\ldots,M_1]\bigoplus_{j=1}^k Y_{k+1-j}=[U_k^1,\dots,U_1^1]\bigoplus_{j=1}^k Y_{k+1-j},\\
    V_2&=V_1[M_k,\ldots,M_1]\bigoplus_{j=1}^k Z_{k+1-j}=[V_k^1,\dots,V_1^1]\bigoplus_{j=1}^k Z_{k+1-j}.
\end{align*}
Combining the results above,
\begin{align*}
    W_1&=U_1\Sigma_1 V_1^\top=\sum_{j=1}^k d_j U_j^1(V_j^1)^\top,\\
    W_2&=U_2\Sigma_2 V_2^\top\\
    &=[U_k^1,\dots,U_1^1]\bigoplus_{j=1}^k Y_{k+1-j}\bigoplus_{j=1}^k \sqrt{1-d_{k+j-1}^2}I_{n_{k+j-1}}\bigoplus_{j=1}^k Z_{k+1-j}^\top [V_k^1,\dots,V_1^1]^\top\\
    &=[U_k^1,\dots,U_1^1]\parentheses{\bigoplus_{j=1}^k  \sqrt{1-d_{k+j-1}^2} O_{k+1-j}}[V_k^1,\dots,V_1^1]^\top\\
    &=\sum_{j=1}^k \sqrt{1-d_j^2}U_j^1 O_j(V_j^1)^\top,
\end{align*}
where $O_j=Y_jZ_j^\top\in\calO_{n_j}$.\\

Now we seek the condition on $d_j$ and $O_j$ for which $A=\sqrt{p_1}[\text{vec}(W_1),\text{vec}(W_2)]$ is of full-column rank. Since $\text{rank}(A)=\text{rank}(A^\top A)$, it suffices to find the condition such that $\text{det}(A^\top A)\neq 0$. We claim that such condition is either $k\geq 2$ or $k=1,0<d_1<1$, and $O_1\neq \pm I_{p_1}$. Note that 
\begin{align}\label{sec4.1.thm1.proof.eq6}
\begin{split}
    \text{det}(A^\top A)&=\left[p_1(1-\lambda)\right]^2\left[\text{vec}(W_1)^\top \text{vec}(W_1)\cdot \text{vec}(W_2)^\top \text{vec}(W_2)-(\text{vec}(W_1)^\top \text{vec}(W_2))^2\right]\\
    &=\left[p_1(1-\lambda)\right]^2\left[\sum_{j=1}^kn_jd_j^2\cdot \sum_{j=1}^k n_j(1-d_j^2)-\parentheses{\sum_{j=1}^k d_j\sqrt{1-d_j^2}\tr(O_j)}^2\right]\\
    &\geq \left[p_1(1-\lambda)\right]^2\left[\sum_{j=1}^kn_jd_j^2\cdot \sum_{j=1}^k n_j(1-d_j^2)-\parentheses{\sum_{j=1}^k n_j d_j\sqrt{1-d_j^2}}^2\right]\\ 
    &\geq 0,
\end{split}
\end{align}
where the last two inequalities hold due to Cauchy-Schwartz inequality. Let $a=(a_i),b=(b_i)\in\real^k$, where $a_i=\sqrt{n_i}d_i$ and $b_i=\sqrt{n_i}\sqrt{1-d_i^2}$. Suppose $k=2$. The equality condition for Cauchy-Schwartz inequality implies that the equality in the last inequality of (\ref{sec4.1.thm1.proof.eq6}) holds if $a_i=tb_i$ for some $t\in\real$. However, since $d_1>\cdots >d_k$ and $k\geq 2$, such $t$ does not exist. Indeed, consider $d_1$ and $d_k$. If $d_1=1$ or $d_k=0$, it is easy to see that $a_1\neq tb_1$ and $a_k\neq tb_k$ for any $t$. On the other hand, if $1>d_1>d_k>0$, if $a_1=tb_1$ and $a_k=tb_k$ for some $t$, $d_1/\sqrt{1-d_1^2}=d_k/\sqrt{1-d_k^2}=t$. This is impossible because the map $x\mapsto x/\sqrt{1-x^2}$ is strictly increasing on $(0,1)$. Hence, whenever $k\geq 2$, $\text{det}(A^\top A)>0$. Now suppose $k=1$. Then
\begin{align*}
      \text{det}(A^\top A)&=[p_1(1-\lambda)]^2d_1^2(1-d_1^2)(p_1^2-\text{tr}(O_1)^2).
\end{align*}
Obviously, we need that $d_1\in(0,1)$. Also, by Cauchy-Schwartz inequality, $p_2^2\geq \text{tr}(O_1)^2$ and the equality holds only when $O_1=\pm I_{p_1}$. This concludes the proof for the case when $p_1=p_2$. \\

The remaining case is when $p_1>p_2$ and $p_1-p_2|p_2$. Note that $k$ and $m$ in the proof of Lemma \ref{secA.2.lemma2}, are in fact $\text{gcd}(p_1,p_2)$ and $p_2/k$. Also, applying Lemma \ref{secA.2.lemma2} to (\ref{sec4.1.thm1.proof.eq3}) and (\ref{sec4.1.thm1.proof.eq4}) yields that 
\begin{align*}
    (\sigma_{1,1}^2,\ldots,\sigma_{p_2,1}^2)&=(\sigma_{1,2}^2,\ldots,\sigma_{p_2,2}^2)=(k/(k+1)\mathbf{1}_m^\top,\ldots,1/(k+1)\mathbf{1}_m^\top).
\end{align*}
As an analogy to the proof when $p_1=p_2$, if we partition $I_{p_1}$ and $I_{p_2}$ as $[M_1,\ldots,M_{k+1}]$ and $[N_1,\ldots,N_k]$, respectively, where each block has $m$ columns, one can obtain that 
\begin{align*}
    U_2&=U_1[M_{k+1},\ldots,M_1]\bigoplus_{j=1}^{k+1}Y_{k+2-j}=[U_{k+1}^1,\ldots,U_{1}^2]\bigoplus_{j=1}^{k+1}Y_{k+2-j},\\
    V_2&=V_1[N_1,\ldots,N_k]\bigoplus_{j=1}^k X_{k+1-j}=[V_k^1,\ldots,V_1^1]\bigoplus_{j=1}^k X_{k+1-j},
\end{align*}
where $Y_i,X_j\in\calO_m$. Hence, 
\begin{align*}
    W_1&=\sum_{j=1}^k\sqrt{\frac{k+1-j}{k+1}}U_j^1(V_j^1)^\top,\quad W_2=\sum_{j=2}^{k+1} \sqrt{\frac{j-1}{k+1}}U_{j}^1O_{j}(V_{j-1}^1)^\top,
\end{align*}
where $O_j\in\calO_m$. Note that $A$ is of full-column rank because 
\begin{align*}
    \text{vec}(W_1)^\top \text{vec}(W_2)&=\tr\parentheses{W_1^\top W_2}\\
    &=\tr\parentheses{\sum_{j=2}^k \sqrt{\frac{(k+1-j)(j-1)}{(k+1)^2}}V_j^1 O_j(V_{j-1}^1)^\top}\\
    &=\tr\parentheses{\sum_{j=2}^k \sqrt{\frac{(k+1-j)(j-1)}{(k+1)^2}} O_j(V_{j-1}^1)^\top V_j^1}=0.
\end{align*}
    \item $(p_1,p_2,r)=(p_2r,p_2,r)$: Letting $A_R=[A_1,\ldots,A_R]$, $A_R$ is a $p_1\times p_1$ matrix and(\ref{sec4.1.thm1.proof.eq1}) implies that 
    \begin{align*}
        A_RA_R^\top=p_2 I_{p_1}
    \end{align*}
    so that $A_R=\sqrt{p_2}O$ for some $O\in\calO_{p_1}$. If $O$ is partitioned as $[O_1,\ldots,O_r]$, where each block has $p_2$ columns, $A_i=\sqrt{p_2}O_i$. Then 
    \begin{align*}
        \sum_{i=1}^r A_i^\top A_i=\sum_{i=1}^r p_2 I_{p_2}=rp_2I_{p_2}=p_1I_{p_2}.
    \end{align*}
\end{itemize}
\end{proof}

\begin{proof}[\textbf{Proof of Corollary \ref{sec4.1.cor1}.}]
Since $AA^\top$ is of rank$-r$, it is straightforward to see that $\lambda_{r+1}(C)=\cdots=\lambda_p(C)=\lambda$. Also, the spiked eigenvalues of $C$ are given as $\lambda_i(C)=(1-\lambda)\lambda_i(AA^\top)+\lambda I_p$ for $i=1,\ldots,r$. Because $\lambda_i(AA^\top)=\lambda_i(A^\top A)$ for $i=1,\ldots,r$, we investigate the first $r$ eigenvalues of $A^\top A$ for each scenario of $(p_1,p_2,r)$.
\begin{itemize}
    \item $(p_1,p_2,r)=(p_1,p_1,2),((k+1)m,km,2)$: Recall $A=\sqrt{p_1}[\text{vec}(W_1),\text{vec}(W_2)]$ in the proof of Theorem \ref{sec4.1.thm1}. Also, $AA^\top$ and $A^\top A$ share the same $2$ non-zero eigenvalues. For $2\times 2$ symmetric matrix $B$, the eigenvalues of $B$ are $(\text{tr}(B)\pm\sqrt{\text{tr}(B)^2-4\text{det}(B)})/2$. If $(p_1,p_2)=((k+1)m,km)$, $w_1=\text{vec}(W_1)$ and $w_2=\text{vec}(W_2)$ are orthogonal, and $||w_1||_2^2=||w_2||_2^2=p_2/2$, and so the conclusion follows since $\text{tr}(A^\top A)=2p_1||w_1||_2^2$ and $\det(A^\top A)=p_1^2||w_1||_2^4$. Lastly, if $p_1=p_2$, using $\beta=\det(A^\top A)$ in (\ref{sec4.1.thm1.proof.eq6}) and noting that $\text{tr}(A^\top A)=p_1^2$, the result follows.

\item $(p_1,p_2,r)=(p_2r,p_2,r)$: Recall from the proof of Theorem \ref{sec4.1.thm1}, if $O$ is partitioned as $[O_1,\ldots,O_r]$, where each block has $p_2$ columns, $A_i=\sqrt{p_2}O_i$. Consequently, 
\begin{align*}
    (A^\top A)_{ij}=p_2 \text{vec}(O_i)^\top\text{vec}(O_j)=p_2^2\delta_{ij}
\end{align*}
and thus $A^\top A=p_2^2 I_{r}$. Note that $p_2^2=p/r$ if $(p_1,p_2,r)=(p_2r,p_2,r)$.
\end{itemize}

\end{proof}

\subsection{Proofs of the Results from Section \ref{sec4.2:spec.equiv}}\label{secA.3}
We prove Theorem \ref{sec4.2.thm1}, generalizing the result of \cite{oliveira2026}. We first review the proof strategy from \cite{oliveira2026} in Section \ref{secA.3.1} and then prove Theorem \ref{sec4.2.thm1} in Section \ref{secA.3.2}. The proofs of the ancillary results to prove Theorem \ref{sec4.2.thm1} are deferred to Section \ref{secA.3.3}.\\

Suppose $\Omega=\Omega_2\otimes \Omega_1\in\calS_{p_1,p_2}^+$. Note that each factor in $\Omega$ is identifiable only up to a constant factor. Namely, $\Omega_2\otimes \Omega_1=(c\Omega_2)\otimes (\Omega_1/c)$ for any constant $c>0$. Thus, we will identify $(\Omega_1,\Omega_2)$ through the constraint that $|\Omega_1|^{1/p_1}=|\Omega_2|^{1/p_2}$. Also, we write $p_{\max}=\max\set{p_1,p_2}$ and $p_{\min}=\min\set{p_1,p_2}$.

\subsubsection{Review of the Proof Strategy from \cite{oliveira2026}}\label{secA.3.1}
We review the proof strategy from \cite{oliveira2026}. Define the objective function $f$ on $\calS_{p_1,p_2}^+$ by 
\begin{align}\label{secA.3.1.eq1}
    f(\Theta_2\otimes \Theta_1)=\frac{1}{np}\sum_{i=1}^n\tr\parentheses{\Theta_1Y_i\Theta_2 Y_i^\top}-\frac{1}{p_2}\log|\Theta_2|-\frac{1}{p_1}\log |\Theta_1|.
\end{align}
for given random matrices $Y_1,\ldots,Y_n\in\real^{p_1\times p_2}$ generated according to the model in (\ref{sec4.2.eq1}). Note that $f(\Theta_2\otimes \Theta_1)\equiv d(\Theta_2^{-1}\otimes \Theta_1^{-1}|\rho)$, where $\rho=1/n\sum_{i=1}^n y_iy_i^\top$ for $y_i=\text{vec}(Y_i)$ and $d(\cdot|\cdot)$ is that defined in (\ref{sec2.2.eq1}). Suppose 
\begin{align*}
    \bbP=\set{\Theta=\nu(\Theta_2\otimes \Theta_1):\nu>0,|\Theta_1|^{1/p_1}=|\Theta_2|^{1/p_2}=1}\equiv \calS_{p_1,p_2}^{++}.
\end{align*}
Under affine-invariant geometry, the function $f$ in (\ref{secA.3.1.eq1}) is geodesically convex over $\bbP$. By the equivariance of the Kronecker MLE as in Section \ref{sec2.2:KCD}, it suffices to study the convergence when $k(\Sigma)=I_p$. Given $Y_1,\ldots,Y_n$, let $\nabla f$ denote the Riemannian gradient of $f$ at $\Theta=\Theta_2\otimes \Theta_1=I_p$ under affine-invariant metric \cite{skovgaard1984,pennec2006,moakher2005}.\\

Note that the Kronecker MLE (separable component) $\hat{K}(Y)=\hat{K}_1(Y)\otimes \hat{K}_2(Y)$ based on $Y_1,\ldots,Y_n$ is $\hat{\Theta}^{-1}(Y)=\hat{\Theta}_2^{-1}(Y)\otimes \hat{\Theta}_1^{-1}(Y)$, where 
\begin{align*}
   \hat{\Theta}(Y)=\hat{\Theta}_2(Y)\otimes \hat{\Theta}_1(Y):=\argmin_{\Theta_2\otimes \Theta_1\in\bbP}f(\Theta_2\otimes \Theta_1).
\end{align*}
We shall derive the non-asymptotic concentration inequality for $d_{op}(\hat{\Theta}_i(Y),I_{p_i})$ for the following reasons. First, note that
\begin{align}\label{secA.3.1.eq2}
\begin{split}
        d_{op}(\hat{\Theta}(Y),I_p)&=||\hat{\Theta}_2(Y)\otimes\hat{\Theta}_1(Y)-I_p||_2\\
    &=||(\hat{\Theta}_2(Y)-I_{p_2})\otimes(\hat{\Theta}_1(Y)-I_{p_1})+(\hat{\Theta}_2(Y)-I_{p_2})\otimes I_{p_1}+I_{p_2}\otimes (\hat{\Theta}_1(Y)-I_{p_1})||_2\\
    &\leq ||\hat{\Theta}_2(Y)-I_{p_2}||_2||\hat{\Theta}_1(Y)-I_{p_1}||_2+ ||\hat{\Theta}_2(Y)-I_{p_2}||_2+||\hat{\Theta}_1(Y)-I_{p_1}||_2.
\end{split}
\end{align}
Therefore, if $||\hat{\Theta}_2(Y)-I_{p_2}||_2,||\hat{\Theta}_1(Y)-I_{p_1}||_2=O(a_n)$ for some positive sequence $a_n$ converging to $0$ with high probability, then $d_{op}(\hat{\Theta}(Y),I_p)=O(a_n)$ with the same probability guarantee. Moreover, with the same probability guarantee, we have that 
\begin{align}\label{secA.3.1.eq3}
\begin{split}
       d_{op}(\hat{K}(Y),I_p)&=||\hat{K}(Y)-I_p||_2=||\hat{\Theta}^{-1}(Y)-I_p||_2\\
   &\leq ||\hat{\Theta}(Y)-I_p||_2/||\hat{\Theta}(Y)||_2\leq ||\hat{\Theta}(Y)-I_p||_2/(1-||\hat{\Theta}(Y)-I_p||_2)=O(a_n)
\end{split}
\end{align}
for sufficiently large $n$.\\

We derive the non-asymptotic concentration inequality for $d_{op}(\hat{\Theta}_i(Y),I_{p_i})$ built upon the proof techniques from \cite{oliveira2026}. To this end, we introduce the relevant notions from \cite{oliveira2026}. Define the following sets:
\begin{align*}
\calS_p:=\set{X\in\real^{p\times p}:X=X^\top},\quad \calS_p^0:=\set{X\in\calS_p:\tr(X)=0}.    
\end{align*}
Now we introduce $(\delta,\eta)-$quantum expander (see \cite{oliveira2026} also).
\begin{definition}[\textbf{$(\delta,\eta)$-Quantum Expander}]\label{secA.3.1.def1}
Given $Y=(Y_1,\ldots,Y_n)\in(\real^{p_1\times p_2})^n$, define the quantum expanders
\begin{align*}
    &\Phi_Y^{(1,2)}:Z\in \calS_{p_2}\rightarrow \frac{1}{np}\sum_{i=1}^n Y_iZY_i^\top\in  \calS_{p_1},\\
    &\Phi_Y^{(2,1)}:Z\in \calS_{p_1}\rightarrow \frac{1}{np}\sum_{i=1}^n Y_i^\top Z Y_i\in \calS_{p_2}.
\end{align*}
Note that $\Phi_Y^{(2,1)}$ is an adjoint operator of $\Phi_Y^{(1,2)}$. Let $\calS_q^0$ be the set of $q\times q$ symmetric matrices of trace $0$. Define the norm $||\cdot||_0$ of $\Phi_Y^{(1,2)}$ and $\Phi_Y^{(2,1)}$ by 
\begin{align*}
||\Phi_Y^{(1,2)}||_0&=\sup_{H_i\in\calS_{p_i}^0,||H_i||_F=1}<H_1,\Phi_Y^{(1,2)}(H_2)>,\\
||\Phi_Y^{(2,1)}||_0&=\sup_{H_i\in\calS_{p_i}^0,||H_i||_F=1}<H_2,\Phi_Y^{(2,1)}(H_1)>,
\end{align*}
As an adjoint operator, we see that $||\Phi_Y^{(1,2)}||_0=||\Phi_Y^{(2,1)}||_0$. Here $<A,B>=\tr(A^\top B)$. We say the operator $\Phi_Y^{(1,2)}$ and $\Phi_Y^{(2,1)}$ are $\delta-$doubly balanced if
\begin{align}\label{secA.3.1.def1.eq1}
    \norm{\frac{\Phi_Y^{(1,2)}(I_{p_2})}{\tr(\Phi_Y^{(1,2)}(I_{p_2}))}-\frac{I_{p_1}}{p_1}}\leq \frac{\delta}{p_1},\quad \norm{\frac{\Phi_Y^{(2,1)}(I_{p_1})}{\tr(\Phi_Y^{(2,1)}(I_{p_1}))}-\frac{I_{p_2}}{p_2}}\leq \frac{\delta}{p_2}
\end{align}
and $(\delta,\eta)-$quantum expander if the above and 
\begin{align}\label{secA.3.1.def1.eq2}
  ||\Phi_Y^{(1,2)}||_0, ||\Phi_Y^{(2,1)}||_0 \leq \eta\frac{\tr(\Phi_Y^{(1,2)}(I_{p_2}))}{\sqrt{p_1p_2}}=\eta\frac{\tr(\Phi_Y^{(2,1)}(I_{p_1}))}{\sqrt{p_1p_2}}=\eta\frac{\tr(\rho)}{\sqrt{p_1p_2}}
\end{align}
are satisfied. We say $\Phi_Y^{(1,2)}$ has a spectral gap $\gamma\in(0,1)$ if 
\begin{align*}
    \sigma_2(\Phi_Y^{(1,2)})\leq (1-\gamma)\frac{\tr(\Phi^{(1,2)}(I_{p_2}))}{\sqrt{p_1p_2}}.
\end{align*}
The analogue of the above can be defined for $\Phi_Y^{(2,1)}$.\\
\end{definition}

Note that by vectorization,
\begin{align}\label{secA.3.1.eq4}
||\Phi_Y^{(1,2)}||_0,||\Phi_Y^{(2,1)}||_0&\leq \max\set{\frac{1}{np}||(\sum_{i=1}^n Y_i\otimes Y_i)\circ \Pi_2||_2, \frac{1}{np}||(\sum_{i=1}^n Y_i^\top\otimes Y_i^\top)\circ \Pi_1||_2}
\end{align}
for the orthogonal projection operators $\Pi_1,\Pi_2$ defined as
\begin{align*}
    \Pi_i=I_{p_i^2}-\frac{1}{p_i}\text{vec}(I_{p_i})\text{vec}(I_{p_i})^\top.
\end{align*}

In view of the proof of Theorem $1.11$ from \cite{oliveira2026}, it suffices to show the followings:
\begin{itemize}
    \item[]\hypertarget{cond1}{(\rom{1})} $\nabla f$ is small with high probability. 
    \item[]\hypertarget{cond2}{(\rom{2})} $\Phi_Y^{(1,2)}$ and $\Phi_Y^{(2,1)}$ are $(\delta,\eta)-$quantum expanders for $\delta,\eta\in(0,1)$ with high probability.
\end{itemize}
The following lemmas illustrate why it suffices to show (\rom{1})--(\rom{2}).

\begin{lemma}[Lemma E.2 of \cite{oliveira2026.supp}]\label{secA.3.1.lemma1}
    There exists a universal constant $c>0$ with the following property. If $\Phi$ is an $(\epsilon,\eta)$-quantum expander and $\epsilon\leq c(1-\eta)$, then $\Phi$ has spectral gap $1-\eta-O(\epsilon)$.
\end{lemma}

\begin{lemma}[Lemma E.9 of \cite{oliveira2026.supp}]\label{secA.3.1.lemma2}
    There is a constant $c>0$ with the following property : let $X=(X_1,\ldots,X_n), W =(W_1\ldots,W_n)\in(\real^{p_1\times p_2})^n$ such that $W_i=X_iR$ for some $R\in GL_{p_2}$. Let $0<\epsilon,\eta<1$. If $\Phi_X$ is an $(\epsilon,\eta)$-quantum expander and $||R^\top R-I_{p_2}||_2\leq \delta$ for some $\delta\leq c$, then $\Phi_W$ is an $(\epsilon+O(\delta),\eta+O(\delta))-$quantum expander.
\end{lemma}

\begin{lemma}[Corollary E.4 of \cite{oliveira2026.supp}]\label{secA.3.1.lemma3}
    There is a universal constant $C>0$ such that the following holds. Let $\epsilon,\gamma\in(0,1)$, $1<p_1,p_2$, and suppose the map $\Phi_W^{(1,2)}$ ($\Phi_W^{(2,1)}$ ) is $\epsilon$-doubly balanced and has spectral gap $\gamma$, where $\gamma^2\geq C\epsilon\log p_{\min}$. Further assume that $\tr\parentheses{\sum_{i=1}^n W_iW_i^\top}=np$. Then the MLE $\hat{\Theta}:=\hat{\Theta}(W)=\hat{\Theta}_1(W)\otimes \hat{\Theta}_2(W)$ exists, is unique, and satisfies 
    \begin{align*}
        \max\set{||\hat{\Theta}_1(W)-I_{p_1}||_2,||\hat{\Theta}_2(W)-I_{p_2}||_2}\leq O\parentheses{\frac{\epsilon \log p_{\min}}{\gamma}}
    \end{align*}
\end{lemma}
To outline the proof strategy, note that (\rom{1}) will be used to show that (\rom{2}) is a $(\delta,\eta)-$quantum expander with high probability. For random matrices $Y_1,\ldots,Y_n$ generated according to (\ref{sec4.2.eq1}), let $R=\parentheses{\frac{1}{np_1}\sum_{i=1}^n Y_i^\top Y_i}^{-1/2}$, which is indeed non-singular if $p_{\max}/p_{\min}<n$ with probability $1$. Let $Z_i=Y_iR$ and $Z=(Z_1,\ldots,Z_n)$. Suppose $\Phi_Y^{(1,2)}$ is an $(\epsilon,\eta)-$quantum expander and $||R^2-I_p||_2=O(\delta)$ is small enough. Then Lemma \ref{secA.3.1.lemma2} implies that $\Phi_Z$ is an $(\epsilon+O(\delta),\eta+O(\delta))-$quantum expander. If $\epsilon$ is so small enough, then we see $\Phi_Z$ has a spectral gap $1-\eta-O(\epsilon+O(\delta))$. Since $\tr\parentheses{\sum_{i=1}^n Z_iZ_i^\top}=np$, Lemma \ref{secA.3.1.lemma3} implies that $\hat{\Theta}(Z)$ exists. By the equivariance of the Kronecker MLE, one can observe that $\hat{\Theta}_1(Y)=\hat{\Theta}_1(Z)$ and $\hat{\Theta}_2(Y)=R\hat{\Theta}_2(Z)R$. If $||R^2-I_p||_2$ and $\epsilon \log p_{\min}/\gamma$ in Lemma \ref{secA.3.1.lemma2}--\ref{secA.3.1.lemma3} are small enough, we consequently have that 
\begin{align*}
    \max\set{||\hat{\Theta}_1(Y)-I_{p_1}||_2,||\hat{\Theta}_2(Y)-I_{p_2}||_2}\leq O\parentheses{\frac{\epsilon \log p_{\min}}{\gamma}} 
\end{align*}
as a consequence of Lemma $A.5$ of \cite{oliveira2026.supp} (see the proof of Theorem $3.1$ of \cite{oliveira2026} also). Therefore, we aim to prove (\rom{1})--(\rom{2}) with $Y_i$'s generated according to (\ref{sec4.2.eq1}).\\ 

Finally, we introduce some additional preliminaries and notations. We first introduce the formula of $\nabla f$, which is available from Lemma $2.9$ of \cite{oliveira2026}. Write 
\begin{align*}
    \nabla f=\left[\begin{array}{c}
    \nabla_0 f \\
    \nabla_1 f\\
    \nabla_2 f 
    \end{array}\right].
\end{align*}
Also, write
\begin{align*}
 \bbH:=\set{(H_0;H_1,H_2):H_0\in\real, H_i\in \calS_{p_i}^0}.
\end{align*}
Here $\bbH$ is a tangent space of $\bbP$ at $\Theta=I_p$. Lastly, recall that $\rho=\frac{1}{np}\sum_{i=1}^n y_iy_i^\top$ for $y_i=\text{vec}(Y_i)$, and define
\begin{align*}
    \rho^{(1)}&=\frac{1}{np}\sum_{i=1}^n Y_iY_i^\top, \quad   \rho^{(2)}=\frac{1}{np}\sum_{i=1}^n Y_i^\top Y_i.
\end{align*}
Note that $\rho^{(1)}$ and $\rho^{(2)}$ are partial traces of $\rho$. Also, the formula of $\nabla f$ is given as follows:
\begin{align*}
    \nabla_0 f&=\tr(\rho)-1, \nabla_i f=\sqrt{p_i}\parentheses{\rho^{(i)}-\frac{\tr(\rho)}{p_i}I_{p_i}} \text{ for }i=1,2.
\end{align*}

\subsubsection{Proof of Theorem \ref{sec4.2.thm1}}\label{secA.3.2}
We first introduce the ancillary results to prove Theorem \ref{sec4.2.thm1}, namely Lemma \ref{secA.3.2.lemma1}--\ref{secA.3.2.lemma2}.\\

The following proves \hyperlink{cond1}{(\rom{1})} of Section \ref{secA.3.1}, namely the small gradient with high probability.
\begin{lemma}\label{secA.3.2.lemma1}
   Let  $Y_1,\ldots,Y_n$ be the random matrices generated according to the model (\ref{sec4.2.eq1}). Assume $\lambda\in(0,1)$ and the existence of a constant $\tau>1$ such that $\tau^{-1}<p_i/\sqrt{n}<\tau$ for $i=1,2$. Suppose $p_{\max}=\max\set{p_1,p_2}$ and $p_{\min}=\min\set{p_1,p_2}$. Then there exist universal constants $C$, $c_1,c_2,c_3>0$, such that for every constant $\epsilon\in(0,1)$, if $n>C\frac{p_{\max}}{p_{\min}}/(1-\epsilon)^2$,
   \begin{align*}
    ||\nabla_i f||_2\leq \frac{9\epsilon}{\sqrt{p_i}},\quad |\nabla_0 f|_2\leq \epsilon
\end{align*}
for $i=1,2$, so that 
\begin{align*}
    ||\nabla f||_F^2\leq 163\epsilon^2
\end{align*}
with probability at least 
\begin{align*}
1-6\exp\parentheses{-c_1np_{\min}\epsilon^2}-6\exp\parentheses{-c_2\frac{n\epsilon^2}{1-\lambda}}-6\exp\parentheses{-c_3n}.
\end{align*}
The above also holds when $\epsilon=O(n^{-\delta})$ for a fixed constant $\delta\in(0,\min\set{\frac{\alpha+1/2}{2},\frac{1}{2}})$ with $1-\lambda\asymp n^{-\alpha}$ for constant $\alpha\geq 0$.
\end{lemma}

The following proves \hyperlink{cond2}{(\rom{2})} of Section \ref{secA.3.1}, namely $(\delta,\eta)-$quantum expander with high probability.
\begin{lemma}\label{secA.3.2.lemma2}
Let $Y_1,\ldots,Y_n$ be random matrices generated according to (\ref{sec4.2.eq1}). Assume there exists a constant $\tau>1$ such that $\tau^{-1}<\frac{p_1}{\sqrt{n}},\frac{p_2}{\sqrt{n}}<\tau$. Suppose $\lambda\in(0,1)$ and $1-\lambda\asymp n^{-\alpha}$ for some constant $\alpha\geq 0$. Fix a constant $\delta\in(0,\min\set{\frac{\alpha+1/2}{2},\frac{1}{2}})$. If $\epsilon,\eta\in(0,1)$ are constants, take $(\eta,\lambda)$ such that $1-7/16\eta<\lambda$ when $\alpha=0$, whereas $\eta\in(0,1)$ can be any value if $\alpha>0$. Also, if $\epsilon=O(n^{-\delta})$, take any $\eta\in(0,1)$. Then there exist universal constants $C,c_1,c_2,c_3,c_4.c_5,c_6>0$ such that if $n\geq C(p_{\max}/p_{\min})$ for $p_{\max}=\max\set{p_1,p_2}$ and $p_{\min}=\min\set{p_1,p_2}$, 
$\Phi_Y^{(1,2)}$ is an $(\epsilon,\eta)-$quantum expander with probability at least 
\begin{align*}
  &1-6\exp\parentheses{-c_1np_{\min}\epsilon^2}-6\exp\parentheses{-c_2\frac{n\epsilon^2}{1-\lambda}}-6\exp\parentheses{-c_3n}-\parentheses{\frac{\sqrt{np}}{{p_1+p_2}}}^{-c_4(p_1+p_2)}\\
   &-\parentheses{\frac{1}{\sqrt{1-\lambda}}\frac{\sqrt{np_{\min}}}{p_1+p_2}}^{-c_5(p_1+p_2)}-\parentheses{\frac{1}{1-\lambda}\frac{\sqrt{np_{\min}}}{\sqrt{p_1}+\sqrt{p_2}}}^{-c_6(\sqrt{p_1}+\sqrt{p_2})},
\end{align*}
With the same probability guarantee, $\Phi_Y^{(2,1)}$ is also an $(\epsilon,\eta)-$quantum expander with possibly different universal constants.
\end{lemma}
As a consequence of the above, we have the following result.
\begin{lemma}\label{secA.3.2.lemma3}
Let $Y_1,\ldots,Y_n$ be random matrices generated according to (\ref{sec4.2.eq1}) and assume the same as Lemma \ref{secA.3.2.lemma2} with $\epsilon=O(n^{-\delta})$ for a fixed constant $\delta\in(0,\min\set{(\alpha+1/2)/2,1/2})'$. Suppose $Z_i=Y_i(\frac{1}{np_1}\sum_{j=1}^n Y_j^\top Y_j)^{-1/2}$ for some sufficiently large constant $C>0$ in  Lemma \ref{secA.3.2.lemma2}. Put $Z_i$'s into $Z=(Z_1,\ldots,Z_n)$. Then $\Phi_Z^{(1,2)}$ and $\Phi_Z^{(2,1)}$ are $O(n^{-\delta})-$doubly balanced for any constant $\eta\in(0,1)$ with the same probability guarantee as in Lemma \ref{secA.3.2.lemma2}. 
\end{lemma}
We finally prove Theorem \ref{sec4.2.thm1}.
\begin{proof}[Proof of Theorem \ref{sec4.2.thm1}.]
In view of (\ref{secA.3.1.eq2})--(\ref{secA.3.1.eq3}) and Lemma \ref{secA.3.1.lemma3}, it suffices to prove that with probability $1-\calP_{n,p_1,p_2}$ for $\calP_{n,p_1,p_2}$ defined in (\ref{sec4.2.thm1.eq2}), 
\begin{align*}
        \max\set{||\hat{\Theta}_1(Y)-I_{p_1}||_2,||\hat{\Theta}_2(Y)-I_{p_2}||_2}\leq O\parentheses{n^{-\delta}\log n}.
\end{align*}
Note that $1-\calP_{n,p_1,p_2}$ is the same as the probability guarantee in Lemma \ref{secA.3.2.lemma2} with possibly different universal constants.\\

Let $\hat{\Theta}(W)=\hat{\Theta}_2(W)\otimes \hat{\Theta}_1(W)$ be the inverse of the Kronecker MLE based on $W$, where $W$ is either $Y=(Y_1,\ldots,Y_n)$ as in Theorem \ref{sec4.2.thm1} or $Z=(Z_1,\ldots,Z_n)$ as in Lemma \ref{secA.3.2.lemma3}. Identify each factor via the constraint $|\hat{\Theta}_1(W)|^{1/p_1}=|\hat{\Theta}_2(W)|^{1/p_2}$. Note that $\tr(\sum_{i=1}^n Z_iZ_i^\top)=np$. Also, there exists a universal constant $c>0$ such that $n^{-\delta}\leq c(1-\eta)$ given constants $\delta,\eta\in(0,1)$ and $1-\eta-O(n^{-\delta})\geq CO(n^{-\delta})\log p_{\min}$ as $p_{\min}\asymp \sqrt{n}$. Therefore, $\hat{\Theta}(Z)$ indeed uniquely exists and satisfies
\begin{align*}
             \max\set{||\hat{\Theta}_1(Y)-I_{p_1}||_2,||\hat{\Theta}_2(Y)-I_{p_2}||_2}\leq O\parentheses{n^{-\delta}\log n}.
\end{align*}
by Lemma \ref{secA.3.1.lemma1} and \ref{secA.3.1.lemma3}, whenever $\Phi_Z^{(1,2)}$ is a $(O(n^{-\delta}),\eta+O(n^{-\delta}))-$quantum expander. By Lemma \ref{secA.3.2.lemma3}, this event holds with probability guarantee $1-\calP_{n,p_1,p_2}$ as in Theorem \ref{sec4.2.thm1}. From the equivariance of the Kronecker MLE, one can deduce that 
\begin{align*}
    \hat{\Theta}_1(Y)=\hat{\Theta}_1(Z),\quad \hat{\Theta}_2(Y)=R\hat{\Theta}_2(Z)R
\end{align*}
for $R=(\frac{1}{np_1}\sum_{j=1}^n Y_j^\top Y_j)^{-1/2}$. Following the proof of Theorem $1.11$ from \cite{oliveira2026} and noting that $\epsilon=O(n^{-\delta})$ and $1-\lambda\asymp n^{-\alpha}$, we conclude the desired claim.
\end{proof}

\subsubsection{Proofs of the Ancillary Results}\label{secA.3.3}.
We prove the ancillary results introduced in Section \ref{secA.3.2}, Lemma \ref{secA.3.2.lemma1}--\ref{secA.3.2.lemma3}. We first prove Lemma \ref{secA.3.2.lemma1}. Throughout the proof, we will frequently use the following notation. Given $A=(A_1,\ldots,A_r)\in (\real^{p_1\times p_2})^r$, 
\begin{align*}
    A_R:=[A_1,\ldots,A_r],\quad A_C:=[A_1^\top,\ldots,A_r^\top].
\end{align*}

Now we prove Lemma \ref{secA.3.2.lemma1}. We first introduce ancillary results.

\begin{lemma}\label{secA.3.3.lemma2}
Suppose $\tau^{-1}<p_i/\sqrt{n}<\tau$ for some constant $\tau>1$ with $i=1,2$. Suppose random matrices $Y_1,\ldots,Y_n\in\real^{p_1\times p_2}$ are generated according to the model (\ref{sec4.2.eq1}) and $\rho=1/n\sum_{i=1}^n y_iy_i^\top$ for $y_i=\text{vec}(Y_i)$. Assume $1-\lambda\asymp n^{-\alpha}$ for some constant $\alpha\geq 0$ and $\lambda\in(0,1)$. For every constant $\epsilon\in(0,1)$, 
\begin{align*}
    |\tr(\rho)-1|\leq \epsilon,
\end{align*}
with a failure probability at most
\begin{align*}
\begin{cases}
    2\exp\parentheses{-cn^{1+\alpha}\epsilon^2},&\quad 0\leq \alpha<1,\\
    2\exp\parentheses{-cnp\epsilon^2},&\quad 1\leq \alpha<\infty,  \\
\end{cases}
\end{align*}
for some universal constant $c>0$. If $\epsilon=O(n^{-\delta})$ for some constant $0<\delta<\min\set{\frac{\alpha+1}{2},1}$, $|\tr(\rho)-1|=O(n^{-\delta})$ almost surely. 
\end{lemma}
\begin{proof}
Let $y=(I_n\otimes C^{1/2})[z_1^\top,\ldots,z_n^\top]^\top$ for $z_i\overset{i.i.d.}{\sim} N_p(0,I_p)$ for $C=(1-\lambda)AA^\top+\lambda I_p$, where $A=[\text{vec}(A_1),\ldots,\text{vec}(A_r)]$ for $A_i$'s in (\ref{sec4.2.eq1}). Then note that $y^\top y\overset{d}{\equiv}np\cdot \tr(\rho)$. By Hanson-Wright inequality (\cite{rudelson2013}, Theorem $1.1$), there exists a universal constant $c>0$ such that for any $t>0$,
\begin{align*}
    \bbP(|y^\top y-\bbE y^\top y|>t)\leq 2\exp\parentheses{-c\min\set{\frac{t^2}{||I_n\otimes C||_F^2},\frac{t}{||I_n\otimes C||_2}}}.
\end{align*}
Note that 
\begin{align*}
    ||I_n\otimes C||_2&=||I_n||_2 ||C||_2=((1-\lambda)\sigma_1^2(A)+\lambda)\leq (1-\lambda)p+\lambda\leq 2\max\set{(1-\lambda)p,\lambda},\\
    ||I_n\otimes C||_F^2&=||I_n||_F^2 ||C||_F^2=n||C||_F^2=n(1-\lambda)^2||AA^\top||_F^2+2n\lambda(1-\lambda)\tr(AA^\top)+\lambda^2 np.\\
\end{align*}
Hence,
    \begin{align*}
       ||I_n\otimes C||_F^2=||I_n||_F^2 ||C||_F^2=n||C||_F^2&\leq n(1-\lambda)^2 \tr(AA^\top)||AA^\top||_2+2\lambda(1-\lambda)np+\lambda^2 np\\
       &=n(1-\lambda)^2 p\sigma_1^2(A)+2\lambda(1-\lambda)np+\lambda^2 np\\
       &=n(1-\lambda)^2p(\sigma_1^2(A)-1)+np\\
       &=n(1-\lambda)^2p^2\frac{\sigma_1^2(A)-1}{p}+np\\
       &\leq n(1-\lambda)^2 p^2+np\\
       &\leq 2np\max\set{1,(1-\lambda)^2p},
    \end{align*} 
    noting that $\sigma_1^2(A)\leq \sum _{i=1}^r \sigma_i^2(A)=\tr(AA^\top)=p$ by Proposition \ref{sec3.1.prop1} as $AA^\top$ is a rank$-r$ core. Furthermore,    
\begin{align*}
   \bbE y^\top y=n\bbE[y_1^\top y_1]=n\tr(V[y_1])=np.
\end{align*}    
    Thus, with $t=np\epsilon$ for some $\epsilon\in(0,1)$, we have that 
\begin{align*}
    \bbP(|\tr(\rho)-1|>\epsilon)\leq  2\exp\parentheses{-\frac{c}{2}n\min\set{\frac{p\epsilon^2}{\max\set{1,(1-\lambda)^2p}},\frac{p\epsilon}{\max\set{(1-\lambda)p,\lambda}}}}.
\end{align*}
Suppose $1-\lambda\asymp n^{-\alpha}$ for some constant $\alpha\geq 0$. If $\alpha<1/2$, the upper bound on the tail probability above becomes
\begin{align*}
    2\exp\parentheses{-\frac{c}{2}n\min\set{\frac{p\epsilon^2}{\max\set{1,(1-\lambda)^2p}},\frac{p\epsilon}{\max\set{(1-\lambda)p,\lambda}}}}&=2\exp\parentheses{-\frac{c}{2}n\min\set{\frac{\epsilon^2}{(1-\lambda)^2},\frac{\epsilon}{1-\lambda}}}\\
    &\leq 2\exp\parentheses{-\frac{c}{2}n\frac{\epsilon^2}{(1-\lambda)}}.
\end{align*}
The last inequality holds because $\epsilon\in(0,1)$ so that $\epsilon^2\leq \epsilon$ and $1/(1-\lambda)\leq 1/(1-\lambda)^2$ whenever $\lambda\in(0,1)$. Otherwise, note that $\tau^{-2}<p/n<\tau^2$ for some constant $\tau>1$ by the assumption. Thus, if $1/2\leq\alpha<1$,  
\begin{align*}
  2\exp\parentheses{-\frac{c}{2}n\min\set{\frac{p\epsilon^2}{\max\set{1,(1-\lambda)^2p}},\frac{p\epsilon}{\max\set{(1-\lambda)p,\lambda}}}}&\leq 2\exp\parentheses{-c'n\min\set{p\epsilon^2,\epsilon/(1-\lambda)}}\\
  &\leq 2\exp\parentheses{-c^{''}n\frac{\epsilon^2}{1-\lambda}}.
\end{align*}
for some constants $c',c^{''}>0$. Indeed, for given $\epsilon>0$, $\epsilon^2\leq \epsilon$, and $1/(1-\lambda)\asymp n^{\alpha}=o(p)$. Thus, 
\begin{align*}
c^{'''}\epsilon^2/(1-\lambda) \leq \min\set{p\epsilon^2,\epsilon^2/(1-\lambda)}\leq \min\set{p\epsilon^2,\epsilon/(1-\lambda)}
\end{align*}
for some universal constant $c^{'''}>0$. Now with $1-\lambda\asymp n^{-\alpha}$, we conclude the claim when $\alpha\in(0,1)$. If $\alpha\geq 1$,
\begin{align*}
  2\exp\parentheses{-\frac{c}{2}n\min\set{\frac{p\epsilon^2}{\max\set{1,(1-\lambda)^2p}},\frac{p\epsilon}{\max\set{(1-\lambda)p,\lambda}}}}&\leq 2\exp\parentheses{-c'n\min\set{p\epsilon^2,p\epsilon}}\\
  &=2\exp\parentheses{-c'np\epsilon^2}
\end{align*}
as $\epsilon<1$. If $\epsilon=n^{-\delta}$, then the above results hold when $0<\delta<\min\set{\frac{\alpha+1}{2},1}$.
\end{proof}

\begin{lemma}\label{secA.3.3.lemma3}
Assume $\lambda\in(0,1)$ and the existence of a constant $\tau>1$ such that $\tau^{-1}<p_i/\sqrt{n}<\tau$ for $i=1,2$. Suppose $p_{\max}=\max\set{p_1,p_2}$ and $p_{\min}=\min\set{p_1,p_2}$. Let $Y_R=[Y_1,\ldots,Y_n]$ and $Y_C=[Y_1^\top,\ldots,Y_n^\top]$ for random matrices $Y_1,\ldots,Y_n$ generated according to the model (\ref{sec4.2.eq1}). For every constant $\epsilon\in(0,1)$ and $n\geq C\frac{p_{\max}}{p_{\min}(1-\epsilon)^2}$ for sufficiently large constant $C>0$, there exist universal constants $c_1,c_2,c_3>0$ such that the event  
\begin{align*}
 (1-\epsilon)\sqrt{np_2}-\sqrt{p_1}    \leq \sigma_{p_1}(Y_R)  \leq \sigma_1(Y_R)\leq(1+\epsilon)\sqrt{np_2}+\sqrt{p_1}  \\
\end{align*}
holds with probability at least 
\begin{align*}
    1-2\exp\parentheses{-c_1np_{\min}\epsilon^2}-2\exp\parentheses{-c_2\frac{n\epsilon^2}{1-\lambda}}-2\exp\parentheses{-c_3n},
\end{align*}
Similarly, with the same probability guarantee, 
\begin{align*}
     (1-\epsilon)\sqrt{np_1}-\sqrt{p_2}\leq \sigma_{p_2}(Y_C)  \leq \sigma_1(Y_C)\leq(1+\epsilon)\sqrt{np_1}+\sqrt{p_2}
\end{align*}
The above also holds when $\epsilon=O(n^{-\delta})$ for $\delta\in(0,\min\set{\frac{\alpha+1/2}{2},\frac{1}{2}})$ with $1-\lambda\asymp n^{-\alpha}$ for constant $\alpha\geq 0$.
\end{lemma}
\begin{proof}
    We prove the claim for $Y_R$ as that for $Y_C$ follows similarly. The proof is again based on Hanson-Wright inequality along with a standard argument based on $\epsilon-$net. For a given $v\in \bbS^{p_1-1}$, define the $\epsilon-$net of $v$ by 
    \begin{align}
        \calN_{\epsilon}(v)=\set{u\in \bbS^{p_1-1}:||u-v||_2\leq \epsilon}.
    \end{align}
    Then it holds that $|\calN_{\epsilon}(v)|\leq (3/\epsilon)^{p_1}$. Given $\epsilon>0$, let $E$ be the event that 
    \begin{align*}
        \max_{x\in\calN_{1/2}(v_R)} |\frac{1}{np_2} ||Y_R^\top x||_2^2-1|\leq \delta/2,
    \end{align*} 
    where $v_R:=\argmax_{v\in\bbS^{p_1-1}} ||Y_R^\top v||_2$. Following the standard argument based on $\epsilon-$net, one can see that on the event $E$, 
    \begin{align*}
     ||\frac{1}{np_2}Y_RY_R^\top-I_{p_2}||_2=\max\set{|\sigma_1(Y_R)^2/np_2-1|,|\sigma_{p_1}(Y_R)^2/np_2-1|}\leq \delta
    \end{align*}
    as $n>p_1/p_2+p_2/p_1$. Take
    \begin{align*}
        \delta=-(\epsilon-1)^2+1+2(1-\epsilon)\sqrt{\frac{p_1}{np_2}}-\frac{p_1}{np_2}.
    \end{align*}
    By the assumption on $n$, $\delta\in(0,1)$ with sufficiently large constant $\delta$. Indeed, 
    \begin{align*}
        \delta=1-\parentheses{(\epsilon-1)^2+2(\epsilon-1)\sqrt{\frac{p_1}{np_2}}+\frac{p_1}{np_2}}\in (0,1)
    \end{align*}
    Also, because $2(1-\epsilon)\sqrt{\frac{p_1}{np_2}}-\frac{p_1}{np_2}>0$ and $\epsilon\in(0,1)$, $\delta\geq c\epsilon$ for sufficiently small universal constant $c>0$. On the event $E$,
    \begin{align*}
        &\sigma_1(Y_R)^2/np_2\leq 1+\epsilon^2+2\epsilon+2(1+\epsilon)\sqrt{\frac{p_1}{np_2}}+\frac{p_1}{np_2}\\
        \Rightarrow &\sigma_1(Y_R)^2\leq (1+\epsilon)^2np_2+2(1+\epsilon)\sqrt{np_1p_2}+p_1\\
            \Rightarrow &\sigma_1(Y_R)\leq (1+\epsilon)\sqrt{np_2}+\sqrt{p_1}.
    \end{align*}
    Likewise, we have that $\sigma_{p_2}(Y_R)\geq (1-\epsilon)\sqrt{np_2}-\sqrt{p_1}>0$. Hence, we obtain the upper bound on the probability of the event $E^c$.\\
    
    Fix $x\in \calN_{1/2}(v_R)$. Define the matrices
    \begin{align*}
        &E_R:=[E_1,\ldots,E_R],\quad A_R:=[A_1,\ldots,A_r], Z:=(z_{ij})\in \real^{r\times n},
    \end{align*}
    for $E_i,A_i,z_{ij}$ in (\ref{sec4.2.eq1}). Then one can observe that 
    \begin{align*}
        Y_R\overset{d}{\equiv}\sqrt{\lambda} E_R+\sqrt{1-\lambda}A_R(Z\otimes I_{p_2}).
    \end{align*}
    Observe that 
    \begin{align*}
      ||Y_R^\top x||_2^2&=\lambda(x^\top E_RE_R^\top x)+(1-\lambda) (x^\top A_R(ZZ^\top\otimes I_{p_2})A_R^\top x)\\
      &+2\sqrt{\lambda(1-\lambda)}(x^\top E_R(Z^\top \otimes I_{p_2}) A_R^\top x).
    \end{align*}
    Also, noting that $A_RA_R^\top=p_2I_{p_1}$ and $Z$ and $E_R$ are independent standard Gaussian matrices, 
    \begin{align*}
        &\bbE[E_RE_R^\top]=np_2 I_{p_1},\quad \bbE[E_R(Z^\top\otimes I_{p_2})]=0,\\
             &\bbE[A_R(ZZ^\top \otimes I_{p_2})A_R^\top]=A_R(\bbE[ZZ^\top]\otimes I_{p_2})A_R^\top=n A_RA_R^\top=np_2 I_{p_1}.
    \end{align*}
    Hence,
    \begin{align*}
        \frac{1}{np_2} \bbE[x^\top E_RE_R^\top x]=1,\quad \frac{1}{np_2}\bbE[x^\top (A_R(\bbE[ZZ^\top]\otimes I_{p_2})A_R^\top)x]=1.
    \end{align*} 
    Given a constant $\delta>0$, define the events $E_1$, $E_2$, $E_3$ that 
    \begin{align*}
        E_1&:=\set{|x^\top E_RE_R^\top x/np_2-1|>\delta/(6\lambda)},\\
        E_2&:=\set{|x^\top (A_R(ZZ^\top\otimes I_{p_2})A_R^\top)x/np_2-1|>\delta/(6(1-\lambda))},\\
        E_3&:=\set{|x^\top E_R(Z^\top \otimes I_{p_2}) A_R^\top x|/np_2>\delta/(12\sqrt{\lambda(1-\lambda)})}.
    \end{align*}
    Since 
    \begin{align*}
       \frac{1}{np_2}||Y_R^\top x||^2-1
        &=\lambda(\frac{x^\top E_RE_R^\top x}{np_2}-1)+(1-\lambda)(\frac{x^\top(A_R(ZZ^\top\otimes I_{p_2})A_R^\top)x}{np_2}-1)\\
        &+\frac{x^\top E_R(Z^\top \otimes I_{p_2})A_R^\top x}{np_2},
    \end{align*}
    it holds that 
    \begin{align*}
        \bbP\parentheses{|\frac{1}{np_2}||Y_R^\top x||_2^2-1|>\delta/2}\leq \bbP(E_1)+\bbP(E_2)+\bbP(E_3).
    \end{align*}
Now we bound $\bbP(E_i)$ for each $i=1,2,3$. To bound $\bbP(E_1)$, because $E_R^\top x\sim N_{np_2}(0,I_{np_2})$, using the tail probability for the sum of independent chi-squared random variables or simply Hanson-Wright inequality yields that 
\begin{align*}
    \bbP(E_1)\leq 2\exp\parentheses{-cnp_2\min\set{\frac{\delta}{6},\frac{\delta^2}{36}}}
\end{align*}
for some universal constant $c>0.$ By the choice of $\delta$ and the assumption on $n$, 
\begin{align*}
    \bbP(E_1)\leq 2\exp\parentheses{-cnp_2\epsilon^2}
\end{align*}
for some universal constant $c>0$. \\

Next, to bound $\bbP(E_2)$, let $u=A_R^\top x/\sqrt{p_2}$ and $z=\text{vec}(Z)\sim N_{nr}(0,I_{nr})$. Since $||x||_2=1$ and $A_RA_R^\top=p_2 I_{p_1}$, $||u||_2=1$. By vec-Kronecker identity,  
\begin{align*}
    (Z^\top\otimes I_{p_2})A_R^\top x=\sqrt{p_2}(Z^\top \otimes I_{p_2})u=\sqrt{p_2}\text{vec}(UZ)=\sqrt{p_2}(I_n\otimes U)z,
\end{align*}
where $U=\textsf{mat}_{p_2\times r}(u)$. Hence,
\begin{align*}
    x^\top A_R(ZZ^\top \otimes I_{p_2})A_R^\top x=p_2\cdot z^\top (I_n\otimes U^\top U)z.
\end{align*}
Note that the mean of the above random variable is $np_2$. Applying the Hanson-Wright inequality gives
\begin{align*}
    \bbP(| x^\top A_R(ZZ^\top \otimes I_{p_2})A_R^\top x-np_2|>t)\leq 2\exp\parentheses{-\frac{1}{8}\min\set{\frac{t}{p_2||I_n\otimes U^\top U||_2},\frac{t^2}{p_2^2||I_n\otimes U^\top U||_F^2}}}.
\end{align*}
Note that 
\begin{align*}
    ||I_n\otimes U^\top U||_2&=||U^\top U||_2\leq ||U||_2^2\leq ||U||_F^2=1,\\
    ||I_n\otimes U^\top U||_F^2&=||I_n||_F^2 ||U^\top U||_F^2\leq n||U||_2^2||U||_F^2\leq n.
\end{align*}
Taking $t=np_2\delta/(6(1-\lambda))$ leads to 
\begin{align*}
    \bbP(E_2)\leq2\exp\parentheses{-\frac{1}{8}\min\set{\frac{n\delta}{6(1-\lambda)},\frac{n\delta^2}{36(1-\lambda)^2}}}.
\end{align*}
Again by the choice of $\delta$ and the assumption on $n$, 
\begin{align*}
    \bbP(E_2)\leq2\exp\parentheses{-c\frac{n\epsilon^2}{1-\lambda}}.
\end{align*}
for some universal constant $c>0$.\\

It remains to bound $\bbP(E_3)$. Conditional on $Z$, note that $x^\top E_R(Z^\top\otimes I_{p_2})A_R^\top x$ is a mean zero Gaussian random variable as it is a linear combination of i.i.d. standard gaussian random variables. To compute its conditional variance given $Z$, letting $y=(A_R^\top x)$, note that 
\begin{align*}
    x^\top E_R(Z^\top \otimes I_{p_2})y=([y^\top(Z\otimes I_{p_2})]\otimes x^\top)e_r
\end{align*}
for $e_R=\text{vec}(E_R)$. Hence,
\begin{align*}
    V[x^\top E_R(Z^\top\otimes I_{p_2})y|Z]&=([y^\top(Z\otimes I_{p_2})]\otimes x^\top)\bbE[e_re_r^\top|Z]([y^\top(Z\otimes I_{p_2})]\otimes x^\top)^\top\\
    &=([y^\top(Z\otimes I_{p_2})]\otimes x^\top)([y^\top(Z\otimes I_{p_2})]\otimes x^\top)^\top\\
    &=y^\top (ZZ^\top\otimes I_{p_2})y.
\end{align*}
Hence, using the Chernoff bound for the mean-zero gaussian random variable,
\begin{align*}
    \bbP(E_3|Z)&\leq 2\exp\parentheses{-\frac{n^2p_2^2\delta^2}{288[\lambda(1-\lambda)]y^\top (ZZ^\top \otimes I_{p_2})y}}\\
    &=2\exp\parentheses{-\frac{n^2p_2\delta^2}{288[\lambda(1-\lambda)]u^\top (ZZ^\top \otimes I_{p_2})u}}
\end{align*}
for $u=y/\sqrt{p_2}$. Recall that $u=y/\sqrt{p_2}$ has a unit norm. Note that 
\begin{align*}
\bbP(E_3)=\bbE[\bbP(E_3|Z)]&=\bbE[\bbP(E_3|Z)\mathbf{1}_{E_4}]+\bbE[\bbP(E_3|Z)\mathbf{1}_{E_4^c}]\\
&\leq \bbP(E_4)+\bbE[\bbP(E_3|Z)\mathbf{1}_{E_4^c}],
\end{align*}
where the event $E_4$ is defined as
\begin{align*}
    E_4:=\set{|u^\top(ZZ^\top \otimes I_{p_2})u/p_2-1|>1}.
\end{align*}
Using the argument to bound $\bbP(E_2)$, we see that $\bbP(E_4)\leq 2\exp(-n/8)$. On the event $E_4^c$, $u^\top(ZZ^\top \otimes I_{p_2})u/p_2$ is upper bounded by $2$. Thus, since $\lambda<1$,
\begin{align*}
    \bbP(E_3)\leq 2\exp(-n/8)+2\exp\parentheses{-\frac{n^2\delta^2}{576\lambda(1-\lambda)}}\leq 2\exp(-n/8)+2\exp\parentheses{-c\frac{n^2\epsilon^2}{1-\lambda}}.
\end{align*}
Here $\exp\parentheses{-cn^2\epsilon^2/(1-\lambda)}$ can be absorbed into $\exp\parentheses{-c'n\epsilon^2/(1-\lambda)}$ with appropriately chosen $c'>0$.\\

To conclude the proof, combining the bounds on $\bbP(E_i)$ for $i=1,2,3$ lead to
\begin{align*}
    \bbP\parentheses{|\frac{1}{np_2}||Y_R^\top x||_2^2-1|>\delta/2}&\leq \bbP(E_1)+\bbP(E_2)+\bbP(E_3)\\
    &\leq   2\exp\parentheses{-c_1np_{\min}\epsilon^2}+2\exp\parentheses{-c_2\frac{n\epsilon^2}{1-\lambda}}+2\exp\parentheses{-c_3n}\\
    &=:\calP_{n,p_1,p_2}
\end{align*}
for some universal constants $c_1,c_2,c_3$. Since $x$ was chosen among $\calN_{1/2}(v_R)$ and $|\calN_{1/2}(v_R)|\leq 6^{p_1}$, 
\begin{align*}
  \bbP(E)\leq |\calN_{1/2}(v_R)|\calP_{n,p_1,p_2}=\exp(p_1\log 6)\calP_{n,p_1,p_2}.
\end{align*}
Under the assumption that $\tau^{-1}<p_1/\sqrt{n}<\tau$, if $\epsilon,\lambda\in(0,1)$ were constants, we conclude the claim by taking universal constants smaller than $c_1,c_2,c_3$ in $\calP_{n,p_1,p_2}$. However, if $\epsilon=O(n^{-\delta})$ for some constant $\delta>0$, absorbing $\exp\parentheses{p_1\log 6}$ into $\calP_{n,p_1,p_2}$ is possible with smaller constants $c_1,c_2,c_3>0$ when $\delta\in(0,\min\set{\frac{\alpha+1/2}{2},\frac{1}{2}})$ with $1-\lambda\asymp n^{-\alpha}$ for constant $\alpha\geq 0$.
\end{proof}
Using Lemma \ref{secA.3.3.lemma2}--\ref{secA.3.3.lemma3}, we prove Lemma \ref{secA.3.2.lemma1}.
\begin{proof}[\textbf{Proof of Lemma \ref{secA.3.2.lemma1}.}]
    We shall invoke the proof of Proposition $2.11$ from \cite{oliveira2026}. Recall that $\rho^{(1)},\rho^{(2)}$ are partial traces of $\rho$. Following the proof of Proposition $2.11$ of \cite{oliveira2026}, the eigenvalues of $p_i\rho^{(i)}$ are contained in $[1-4\frac{\epsilon}{\sqrt{p_i}},1+8\frac{\epsilon}{\sqrt{p_i}}]$ for any fixed $\epsilon\in(0,1)$ and $i=1,2$, with probability at least 
\begin{align*}
      1-4\exp\parentheses{-c_1np_{\min}\epsilon^2}-4\exp\parentheses{-c_2\frac{n\epsilon^2}{1-\lambda}}-4\exp\parentheses{-c_3n},
\end{align*}
for some universal constants $c_1,c_2$ by Lemma \ref{secA.3.3.lemma3}. Also, by Lemma \ref{secA.3.3.lemma2}, $|\tr(\rho)-1|\leq \epsilon$ for $\epsilon\in (0,1)$ with probability at least $1-2\exp(-c_4np\epsilon^2)-2\exp\parentheses{-c_5n\epsilon^2/(1-\lambda)}$ for some universal constants $c_4,c_5>0$. Consequently, 
\begin{align*}
   ||\nabla_i f||_2\leq \frac{1}{\sqrt{p_i}}|| p_i\rho^{(i)}-I_{p_i}||_2+\frac{|\tr(\rho)-1|}{\sqrt{p_i}}\leq\frac{9\epsilon}{\sqrt{p_i}}
\end{align*}
for each $i=1,2$. Taking the bound on the probability of the union of the events that the eigenvalues of $p_i\rho^{(i)}$ are contained in $[1-4\frac{\epsilon}{\sqrt{p_i}},1+8\frac{\epsilon}{\sqrt{p_i}}]$ and that $|\tr(\rho)-1|\leq \epsilon$ gives the desired probability. Also, on these events,
\begin{align*}
    ||\nabla f||_F^2\leq \sum_{i=1}^2 p_i||\nabla_i||_2^2+|\nabla_0 f|^2\leq 163\epsilon^2.
\end{align*}
The choice of $\delta$ when $\epsilon=O(n^{-\delta})$ follows from Lemma \ref{secA.3.3.lemma2}--\ref{secA.3.3.lemma3}.
\end{proof}

\begin{lemma}\label{secA.3.3.lemma5}
Let $z_1,\ldots,z_n\overset{i.i.d.}{\sim}N_r(0,I_r)$. For every constant $t\geq 2$, there exists a universal constant $c>0$ such that 
\begin{align*}
    ||\sum_{i=1}^n(z_i\otimes I_{m}\otimes z_i-\bbE[z_i\otimes I_{m}\otimes z_i])||_2\leq O(tm^{\frac{1}{2q}}\sqrt{n}q)
\end{align*}
with probability at least $1-t^{-cq}$ whenever $q\geq {r/2}$.
\end{lemma}
\begin{proof}
Let $w_1,\ldots,w_n$ be i.i.d. copies of $z_1$. Note that 
\begin{align*}
   z_i\otimes I_m\otimes z_i- \bbE[z_i\otimes I_{m}\otimes z_i]=\bbE[z_i\otimes I_m\otimes z_i-w_i\otimes I_m\otimes w_i|z_i].
\end{align*}
By Jensen's inequality, for any $p\geq 1$,
\begin{align*}
    \bbE||\sum_{i=1}^n(z_i\otimes I_m\otimes z_i- \bbE[z_i\otimes I_{m}\otimes z_i])||_2^p\leq \bbE||\sum_{i=1}^n(z_i\otimes I_m\otimes z_i-w_i\otimes I_m\otimes w_i)||_2^p.
\end{align*}
To bound the term on the right-hand side with high probability, note that $(z_i,w_i)\overset{d}{\equiv}\parentheses{\frac{z_i+w_i}{\sqrt{2}},\frac{z_i-w_i}{\sqrt{2}}}$. Also,
\begin{align*}
    &\sum_{i=1}^n\parentheses{\frac{z_i+w_i}{\sqrt{2}}\otimes I_m\otimes \frac{z_i+w_i}{\sqrt{2}}-\frac{z_i-w_i}{\sqrt{2}}\otimes I_m\otimes \frac{z_i-w_i}{\sqrt{2}}}=\sum_{i=1}^n\parentheses{z_i\otimes I_m \otimes w_i+w_i\otimes I_m\otimes z_i}.
\end{align*}
Hence, 
\begin{align*}
   \bbE||\sum_{i=1}^n(z_i\otimes I_m\otimes z_i-w_i\otimes I_m\otimes w_i)||_2^p\leq 2^p\bbE||\sum_{i=1}^n z_i\otimes I_m \otimes w_i||_2^p. 
\end{align*}
Note that $||\cdot ||_2\leq |||\cdot |||_{p}$ for Schatten $p-$norm $|||\cdot|||_p$ with $p\geq 1$. Here, the Schatten $p-$norm is defined as $|||A|||_p=\parentheses{\tr([A^\top A]^{p/2})}^{1/p}$. Take $p=2q$ for $q\geq 1$. Then we have that  
\begin{align*}
  \bbE||\sum_{i=1}^n z_i\otimes I_m \otimes w_i||_2^{2q}&\leq \bbE ||| \sum_{i=1}^n z_i\otimes I_m \otimes w_i|||_{2q}^{2q}\\
  &=\bbE\tr\left[\parentheses{\sum_{i,j\in [n]}(z_i^\top\otimes I_m\otimes w_i^\top)(z_i\otimes I_m\otimes w_i)}^{q}\right]\\
  &=\bbE\tr\left[\parentheses{\sum_{i,j\in [n]}(z_i^\top z_j)(w_i^\top w_j)}^q I_m\right]\\
  &=m\sum_{i_\ell,j_\ell \in [n],\ell\in [q]}\bbE[ (z_{i_1}^\top z_{j_1})\cdots(z_{i_q}^\top z_{j_q})] \bbE[(w_{i_1}^\top w_{j_1})\cdots(w_{i_q}^\top w_{j_q})]\\
  &\overset{(\rom{1})}{\leq} m\sum_{i_\ell,j_\ell \in [n],\ell\in [q]} \bbE[ (z_{i_1}^\top z_{j_1})\cdots(z_{i_q}^\top z_{j_q})]\bbE[||w_{i_1}||_2||w_{j_1}||_2\cdots ||w_{i_q}||_2||w_{j_q}||_2]\\
  &\leq m\sum_{i_\ell,j_\ell \in [n],\ell\in [q]} \bbE[ (z_{i_1}^\top z_{j_1})\cdots(z_{i_q}^\top z_{j_q})] \bbE[||w_1||_2^{2q}]\\
  &= m\bbE[U^q]\sum_{i_\ell,j_\ell \in [n],\ell\in [q]} \bbE[ (z_{i_1}^\top z_{j_1})\cdots(z_{i_q}^\top z_{j_q})] \\
  &= m\bbE[U^q]\bbE[||\sum_{i=1}^n z_{i} ||_2^{2q}]\\
  &\overset{(\rom{2})}{\leq} mn^q\bbE[U^q]\bbE[||z_1||_2^{2q}]=mn^q (\bbE[U^q])^2
\end{align*}
where $U\overset{d}{\equiv} ||w_1||_2^2\sim \chi_r^2$. In (\rom{1}), we used the fact that the expectation of the monomial in gaussian random variables is nonnegative and the Cauchy-Schwartz inequality. The inequality (\rom{2}) holds as $\sum_{i=1}^r z_i\sim \sqrt{n}N_r(0,I_r)$. To bound $\bbE[U^q]$, note that if $\Gamma$ is the Gamma function, for any $x>0$,
\begin{align*}
    \Gamma(x)\leq \sqrt{2\pi}x^{x-1/2}e^{-x}e^{\frac{1}{12x}}
\end{align*}
which arises from Stirling's approximation (see \cite{artin1964,diaconis1986}). Because $q\geq r/2$ by the assumption, 
\begin{align*}
    \bbE[U^q]=2^q\frac{\Gamma(q+r/2)}{\Gamma(r/2)}&\leq \frac{\sqrt{2\pi}}{\Gamma(3/2)}e^{-q-r/2}e^{\frac{1}{12(q+r/2)}}(q+r/2)^{-1/2}(q+r/2)^q(q+r/2)^{r/2}\\
    &\leq C' (2q)^q(2q)^q\leq (Cq)^q
\end{align*}
for large enough $q$ and some universal constants $C,C'>0$ as $r$ is fixed. Combining the previous bounds, the Markov's inequality implies that for any fixed $s>0$,
\begin{align*}
    \bbP\parentheses{    ||\sum_{i=1}^n(z_i\otimes I_{m}\otimes z_i-\bbE[z_i\otimes I_{m}\otimes z_i])||_2>s}&\leq \frac{\bbE[||\sum_{i=1}^n(z_i\otimes I_{m}\otimes z_i-\bbE[z_i\otimes I_{m}\otimes z_i])||_2^{2q}]}{s^{2q}}\\
    &\leq \frac{mn^q (2Cq)^{2q}}{s^{2q}}=\parentheses{\frac{2Cm^{1/(2q)}\sqrt{n}q}{s}}^{2q}
\end{align*} 
Letting $s=2Ctm^{1/(2q)}\sqrt{n}q$, 
\begin{align*}
     \bbP\parentheses{    ||\sum_{i=1}^n(z_i\otimes I_{m}\otimes z_i-\bbE[z_i\otimes I_{m}\otimes z_i])||_2>2Ctm^{1/(2q)}\sqrt{n}q}\leq t^{-2q}
\end{align*}
for some universal constant $c>0$ and any fixed constant $t\geq 2$.
\end{proof}

\begin{lemma}\label{secA.3.3.lemma6}
Let $E_1,\ldots,E_n\overset{i.i.d.}{\sim} N_{p_1\times p_2}(0,I_{p})$ and independently, $z_1,\ldots,z_n\overset{i.i.d.}{\sim}N_r(0,I_r)$. For every fixed constant $t\geq 2$, there exists a universal constant $c>0$ such that 
\begin{align*}
    ||\sum_{i=1}^n E_i\otimes z_i||_2\leq O(t\sqrt{n}(p_1+p_2))
\end{align*}
with probability at least $1-t^{-c(p_1+p_2)}$.
\end{lemma}
\begin{proof}
Note that the mean of $\sum_{i=1}^n E_i\otimes z_i$ is zero due to the independence between $E_i$ and $z_i$. Following the argument of Lemma, we shall derive the upper bound on $\bbE|||\sum_{i=1}^n E_i\otimes z_i |||_{2q}^{2q}$ for the Schatten norm $|||\cdot|||_p$ and $q\geq 1$. By direct computations,  
\begin{align*}
 \bbE||\sum_{i=1}^n E_i\otimes z_i ||_2^{2q}\leq    \bbE|||\sum_{i=1}^n E_i\otimes z_i |||_{2q}^{2q}&\leq \bbE\tr\left[\parentheses{\sum_{i,j\in [n]}(z_i^\top z_j) E_i^\top E_j}^q\right]\\
    &=\sum_{i_\ell,j_\ell\in [n],\ell\in [q]} \bbE \tr\left[E_{i_1}^\top E_{j_1}\cdots E_{i_q}^\top E_{j_q}\right]\bbE[ (z_{i_1}^\top z_{j_1})\cdots(z_{i_q}^\top z_{j_q})]\\
    &\leq \bbE[U^q]\parentheses{\bbE|||\sum_{i=1}^n E_i|||_{2q}^{2q}}\\
    &=n^{q}\bbE[U^q]\cdot \bbE|||E_1|||_{2q}^{2q},
\end{align*}
where $U\sim \chi_r^2$. From the proof of Theorem C.1 of \cite{oliveira2026.supp}, one can deduce that 
\begin{align*}
    \bbE|||E_1|||_{2q}^{2q}\leq p_{\min}^2\parentheses{(2\bbE|| E_1||_2)^{2q}+(C\sqrt{q})^{2q}}
\end{align*}
for some universal constant $C>0$ and $p_{\min}=\min\set{p_1,p_2}$. Take $q=2(p_1+p_2)$. Because $\bbE||E_1||_2\leq \sqrt{p_1}+\sqrt{p_2}$ (see (C.1) of \cite{oliveira2026.supp}) and $\sqrt{p_1+p_2}\leq \sqrt{p_1}+\sqrt{p_2}$, 
\begin{align*}
    (2\bbE|| E_1||_2)^{2q}+(C\sqrt{q})^{2q}\leq (C'(\sqrt{p_1}+\sqrt{p_2}))^{2q}
\end{align*}
Note that $p_{\min}^{1/q}$ with $q=2(p_1+p_2)$ is upper bounded. Recalling that $\bbE[U^q]\leq (C^{''}q)^q$ for some universal constant $C''>0$, we consequently have that 
\begin{align*}
    \bbE||\sum_{i=1}^n E_i\otimes z_i ||_2^{2q}\leq p_{\min}^2 n^{q}(C^{''}q)^{q} (C'(\sqrt{p_1}+\sqrt{p_2}))^{2q}&\leq p_{\min}^2 \parentheses{\sqrt{n}\sqrt{C^{''}}\sqrt{q}C'(\sqrt{p_1}+\sqrt{p_2})}^{2q}\\
    &\leq (C^{'''}\sqrt{n}(p_1+p_2))^{2q}\\
\end{align*}
for some universal constant $C^{'''}$ noting that $q=2(p_1+p_2)$. Again, as in the proof of Lemma \ref{secA.3.3.lemma5}, using Markov's inequality leads to the desired claim. 
\end{proof}

The following lemma is trivial but useful to bound $||(\sum_{i=1}^n Y_i\otimes Y_i)\circ \Pi_2||_2, ||(\sum_{i=1}^n Y_i^\top \otimes Y_i^\top)\circ \Pi_1||_2$ with high probability. 
\begin{lemma}\label{secA.3.3.lemma1}
For $A=(A_1,\ldots,A_r)\in (\real^{p_1\times p_2})^r$, if 
\begin{align*}
    A_RA_R^\top=p_2I_{p_1}, \quad A_CA_C^\top=p_1I_{p_2},
\end{align*}
it holds that $\sigma_1(A_R)=\sqrt{p_2}$ and $\sigma_1(A_C)=\sqrt{p_1}$.
\end{lemma}

\begin{lemma}\label{secA.3.3.lemma7}
    Suppose random matrices $Y_1,\ldots,Y_n$ are generated according to (\ref{sec4.2.eq1}). Let $\Pi_i$ be an orthogonal projection defined by
    \begin{align*}
        \Pi_i&=I_{p_i^2}-\frac{1}{p_i}\text{vec}(I_{p_i})\text{vec}(I_{p_i})^\top\\
    \end{align*}
    for $i=1,2$. Assume that there exists a constant $\tau>1$ such that $\tau^{-1}<p_i/\sqrt{n}<\tau$ for $i=1,2$ and $r$ is fixed. Then for any fixed constants $t_1,t_2,t_3\geq 2$, it holds that 
\begin{align*}
    ||(\sum_{i=1}^n Y_i\otimes Y_i)\circ \Pi_2||_2, ||(\sum_{i=1}^n Y_i^\top \otimes Y_i^\top)\circ \Pi_1||_2&\leq O(t_1\xi_{1})+\parentheses{O(t_2\xi_2)+O(t_3\xi_3)+(1-\lambda)n\sqrt{p_1p_2}}\mathbf{1}_{\lambda\neq 1}
\end{align*}
with probability at least $1-t_1^{-c_1(p_1+p_2)}-t_2^{-c_2(p_1+p_2)}-t_3^{-c_3(\sqrt{p_1}+\sqrt{p_2})}$ for some universal constants $c_1,c_2,c_3>0$, where 
\begin{align*}
    &\xi_1=\sqrt{n}(p_1+p_2), \quad \xi_2=\sqrt{(1-\lambda)}\sqrt{np_{\max}}(p_1+p_2),\quad \xi_3=(1-\lambda)\sqrt{n}p_{\max}(\sqrt{p_1}+\sqrt{p_2}). 
\end{align*}
Here $p_{\max}=\max\set{p_1,p_2}$.
\end{lemma}
\begin{proof}
We bound the $||(\sum_{i=1}^n Y_i\otimes Y_i)\circ \Pi_2||_2$ as $||(\sum_{i=1}^n Y_i^\top \otimes Y_i^\top)\circ \Pi_1||_2$ can be bounded similarly. Hence, we simply write $\Pi:=\Pi_2$. Observe that 
\begin{align*}
    \sum_{i=1}^n Y_i\otimes Y_i&=\lambda \sum_{i=1}^n E_i\otimes E_i+\sqrt{\lambda(1-\lambda)}\sum_{i=1}^n E_i\otimes U_i+\sqrt{\lambda(1-\lambda)}\sum_{i=1}^n U_i\otimes E_i\\
    &+(1-\lambda)\sum_{i=1}^n U_i\otimes U_i\\
    &=(\rom{1})+(\rom{2})+(\rom{3})+(\rom{4}).
\end{align*}
where $U_i=\sum_{j=1}^r z_{ij}A_j=A_R(z_i\otimes I_{p_2})$ for $A_R=[A_1,\ldots,A_r]$ and $z_j=[z_{1j},\ldots,z_{rj}]^\top$. Using Theorem $C.1$ of \cite{oliveira2026.supp}, we have that
\begin{align*}
    ||(\rom{1})\circ \Pi||_2\leq O(t\sqrt{n}(p_1+p_2))
\end{align*}
for every $t\geq 2$ with probability at least $1-t^{-c_1(p_1+p_2)}$ for some universal constant $c_1$ as $\lambda< 1$.\\

Now we bound $||(\rom{2})\circ \Pi||_2$ and $||(\rom{4})\circ \Pi||_2$ with high probability, where the bound on $||(\rom{3})\circ \Pi||_2$ can be obtained similarly from $||(\rom{2})\circ \Pi||_2$. Since $||\Pi||_2=1$ as $\Pi$ is an orthogonal projection operator, we initially have easy bounds; $||(A)\circ \Pi||_2\leq ||A||_2||\Pi||_2=||A||_2$ for $A=\rom{2},\rom{4}$. Also, since $\lambda< 1$, 
\begin{align*}
    ||(\rom{2})||_2&\leq \sqrt{1-\lambda}||\sum_{i=1}^n E_i\otimes A_R(z_i\otimes I_{p_2})||_2=\sqrt{1-\lambda}||(I_{p_1}\otimes A_R)\parentheses{\sum_{i=1}^n E_i\otimes z_i}(I_{p_2}\otimes I_{p_2)}||_2\\
    &\leq \sqrt{1-\lambda}||A_R||_2 ||\sum_{i=1}^n E_i\otimes z_i||_2\leq \sqrt{1-\lambda}\sqrt{p_{\max}} ||\sum_{i=1}^n E_i\otimes z_i||_2.
\end{align*}
In the last inequality, we used Lemma \ref{secA.3.3.lemma1} to obtain 
\begin{align*}
    ||A_R||_2=\sqrt{||A_RA_R^\top||_2}=\sqrt{||\sum_{i=1}^r A_iA_i^\top||_2}=\sqrt{||p_2I_{p_1}||_2}\leq \sqrt{p_{\max}}.
\end{align*}
Hence, by Lemma \ref{secA.3.3.lemma6}, for any fixed constant $t_2\geq 2$, there exists a universal constant $c_2>0$ such that with probability at least $1-t_2^{-c_2(p_1+p_2)}$,
\begin{align*}
 ||(\rom{2})\circ \Pi||_2&\leq O(t_2\sqrt{1-\lambda}\sqrt{np_{\max}}(p_1+p_2)).
\end{align*}
On the other hand, note that
\begin{align*}
    \bbE[U_1\otimes U_1]=\sum_{j=1}^r\bbE[z_{1j}^2]A_j\otimes A_j+\sum_{j\neq j'}\bbE[z_{1j}z_{1j'}]A_j\otimes A_{j'}=\sum_{j=1}^r A_j\otimes A_j
\end{align*}
and $U_i\otimes U_i=(A_R\otimes A_R)(z_{i}\otimes I_{p_2}\otimes z_{i})\otimes I_{p_2}$. Thus, 
\begin{align*}
    ||(\rom{4})||_2/(1-\lambda)&\leq ||(\rom{4})-\bbE[(\rom{4})]||_2+||\bbE[(\rom{4})]||_2\\
    &=||(A_R\otimes A_R)\parentheses{\sum_{i=1}^n(z_i\otimes I_{p_2}\otimes z_i-\bbE[z_i\otimes I_{p_2}\otimes z_i])}\otimes I_{p_2}||_2 +n||\sum_{i=1}^r A_i\otimes A_i||_2\\
    &\leq ||A_R||_2^2 ||\sum_{i=1}^n(z_i\otimes I_{p_2}\otimes z_i-\bbE[z_i\otimes I_{p_2}\otimes z_i])||_2+n\sqrt{p_1p_2}\\
    &\leq p_{\max}||\sum_{i=1}^n(z_i\otimes I_{p_2}\otimes z_i-\bbE[z_i\otimes I_{p_2}\otimes z_i])||_2 +n\sqrt{p_1p_2}.
\end{align*}
In the second inequality, we used Lemma \ref{secA.3.3.lemma1}. Take $m=p_2$ and $q\asymp \sqrt{p_1}+\sqrt{p_2}$ for sufficiently large $n$ in Lemma \ref{secA.3.3.lemma5}. Then by the assumption that $\tau^{-1}<p_1/\sqrt{n},p_2/\sqrt{n} <\tau$, $m^{\frac{1}{2q}}$ is bounded above in $n$. Thus, by Lemma, for any fixed constant $t_3\geq 2$, with probability at least $1-t_3^{-c_3(\sqrt{p_1}+\sqrt{p_2})}$, 
\begin{align*}
  ||(\rom{4})||_2/(1-\lambda)&\leq O(t_3 p_{\max}\sqrt{n}(\sqrt{p_1}+\sqrt{p_2})) +n\sqrt{p_1p_2}.
\end{align*}
for some universal constant $t_3\geq 2$.
\end{proof}

Now we prove Lemma \ref{secA.3.2.lemma2}.

\begin{proof}[\textbf{Proof of Lemma \ref{secA.3.2.lemma2}.}]
We begin the proof when $\epsilon,\eta\in(0,1)$ are constants and $\alpha=0$ so that $1-7/16\eta<\lambda$. The proof for the other cases easily follow from the below. We first prove the case when $\alpha=0$ is a constant such that $1-7/16\eta\leq \lambda$. We shall describe the events $E_1$ and $E_2$ that imply (\ref{secA.3.1.def1.eq1})--(\ref{secA.3.1.def1.eq2}) with high probability, respectively. By (2.11) of \cite{oliveira2026}, (\ref{secA.3.1.def1.eq1}) holds if $\sqrt{p_i}||\nabla_i f||_2\leq \epsilon\tr(\rho)$ and $\tr(\rho)$ is lower bounded by some constant with high probability. Take $\epsilon=1/8$ in Lemma \ref{secA.3.3.lemma2}. Then $\tr(\rho)\in(7/8,9/8)$ with high probability. On such event, substituting $\frac{7}{63}\epsilon$ into $\epsilon$ in Lemma \ref{secA.3.2.lemma1}, then we see $\sqrt{p_i}||\nabla_i f||_2\leq \epsilon\tr(\rho)$. We denote this event by $E_1$. On the other hand, from (\ref{secA.3.1.eq4}), we see that (\ref{secA.3.1.def1.eq2}) holds if $\tr(\rho)$ is lower bounded by $7/8$ and 
\begin{align*}
   \Gamma:= \max\set{\frac{1}{np}||(\sum_{i=1}^n Y_i\otimes Y_i)\circ \Pi_2||_2, \frac{1}{np}||(\sum_{i=1}^n Y_i^\top\otimes Y_i^\top)\circ \Pi_1||_2}\leq \frac{7}{8}\frac{\eta}{\sqrt{p_1p_2}}
\end{align*}
with high probability. We denote this event by $E_2$.\\

Hence, we lower bound the probability of the events $E_1$ and $E_2$. The lower bound on the probability of $E_1$ is readily obtained by substituting $1/8$ into $\frac{7}{63}\epsilon$ into $\epsilon$ of Lemma \ref{secA.3.2.lemma1} and $1/8$ into $\epsilon of$ Lemma \ref{secA.3.3.lemma2}, which gives the probability guarantee that
\begin{align*}
1-6\exp\parentheses{-c_1np_{\min}\epsilon^2}-6\exp\parentheses{-c_2\frac{n\epsilon^2}{1-\lambda}}-6\exp\parentheses{-c_3n}.
\end{align*}
for  some universal constants $c_1,c_2,c_3>0$ possibly different from those in Lemma \ref{secA.3.2.lemma1}. To lower bound the probability of the event $E_2$, take 
\begin{align*}
    t_1&=O(\frac{7\eta}{48}\frac{\sqrt{np}}{p_1+p_2}),\quad, t_2=O(\frac{7\eta}{48\sqrt{1-\lambda}}\frac{\sqrt{np_{\min}}}{p_1+p_2}),\quad  t_3=O(\frac{7\eta}{48(1-\lambda)}\frac{\sqrt{np_{\min}}}{\sqrt{p_1}+\sqrt{p_2}})
\end{align*}
in Lemma \ref{secA.3.3.lemma7}. Since $1-7/16 \eta<\lambda$ and $\Gamma$ is bounded above by $\frac{7}{8}\frac{\eta}{\sqrt{p_1p_2}}$ with probability at least
\begin{align*}
    &1-\parentheses{\frac{\sqrt{np}}{{p_1+p_2}}}^{-c_4(p_1+p_2)}-\parentheses{\frac{1}{\sqrt{1-\lambda}}\frac{\sqrt{np_{\min}}}{p_1+p_2}}^{-c_5(p_1+p_2)}-\parentheses{\frac{1}{1-\lambda}\frac{\sqrt{np_{\min}}}{\sqrt{p_1}+\sqrt{p_2}}}^{-c_6(\sqrt{p_1}+\sqrt{p_2})}
\end{align*}
for some constants $c_4,c_5,c_6>0$, we have that
\begin{align*}
  \frac{7\eta}{48}\frac{3}{\sqrt{p_1p_2}}+\frac{1-\lambda}{\sqrt{p_1p_2}}\leq \frac{7}{8}\frac{\eta}{\sqrt{p_1p_2}},
\end{align*}
proving the claim.\\

The remaining cases are $\epsilon$ is a constant with $\alpha>0$ and $\epsilon=O(n^{-\delta})$ for $\delta\in (0,\min\set{(\alpha+1/2)/2,1/2}$. In the former case, if $\alpha>0$, we have that $1-7/16\eta<\lambda$ for any constant $\eta\in(0,1)$ whenever $n$ is sufficiently large. If $\epsilon=O(n^{-\delta})$ for $\delta\in (0,\min\set{(\alpha+1/2)/2,1/2})$, $\tr(\rho)$ can be lower bounded by $1-c$ for any sufficiently small $c$ with sufficiently large $n$. Hence, we can remove the constraint that $1-7/16\eta<\lambda$.  
\end{proof}

Finally, we prove Lemma \ref{secA.3.2.lemma3}.

\begin{proof}[Proof of Lemma \ref{secA.3.2.lemma3}.]
    We prove the claim for $\Phi_Z^{(1,2)}$ as that for $\Phi_Z^{(2,1)}$ follows by symmetry. Let $R:=(\frac{1}{np_1}\sum_{j=1}Y_j^\top Y_j)^{-1/2}$. Note that with sufficiently large constant $C>0$, $R$ is non-singular with probability $1$ whenever $n\geq C\frac{p_{\max}}{p_{\min}}$. Also,
\begin{align*}
    R^{-2}-I_{p_2}=\tr(\rho)\parentheses{p_2\frac{\Phi_Z^{(2,1)}(I_{p_1})}{\tr(\Phi_Z^{(2,1)}(I_{p_1}))}-I_{p_2]}}+(\tr(\rho)-1)I_{p_2}.
\end{align*}
If $|\tr(\rho)-1|\leq O(n^{-\delta})$ and $\Phi_Y^{(2,1)}$ is a $(O(n^{-\delta}),\eta)-$quantum expander for some constants $\delta,\eta\in(0,1)$, the above implies that 
\begin{align*}
    ||R^{-2}-I_{p_2}||_2=O(n^{-\delta}) \Rightarrow ||R^2-I_{p_2}||_2&\leq ||R^{-2}-I_{p_2}||_2/||R^{-2}||_2\\
    &\leq ||R^{-2}-I_{p_2}||_2/ (1-||R^{-2}-I_{p_2}||_2)=O(n^{-\delta}).
\end{align*}
Hence, by Lemma \ref{secA.3.1.lemma2}, $\Phi_Z^{(1,2)}$ is an $(O(n^{-\delta}),\eta+O(n^{-\delta})-$quantum expander. By Lemma \ref{secA.3.2.lemma2} and Lemma \ref{secA.3.3.lemma2}, the event that $|\tr(\rho)-1|\leq O(n^{-\delta})$ and $\Phi_Y^{(2,1)}$ is a $(O(n^{-\delta}),\eta)-$quantum expander for some constants $\delta\in(0,\min\set{(\alpha+1/2)/2,1/2}),\eta\in(0,1)$ holds with the desired probability guarantee.
\end{proof}

\subsection{Proofs of the Results from Section \ref{sec4.3:asymp.null}}\label{secA.4}
We prove Lemma \ref{sec4.3.lemma1}, Corollary \ref{sec4.3.cor1}, Theorem \ref{sec4.3.thm1} and Proposition \ref{sec4.3.prop1}. Throughout this subsection, we will assume $K=I_p$ by the equivariance of the Kronecker MLE (separable component) with respect to $GL_{p_1,p_2}$. Hence, $K^{1/2}$ in \hyperlink{A3}{\textbf{(A3)}} can be set as $I_p$ so that we have $||\hat{K}-I_p||_2=O(a_n)$ with probability $1-o(1)$. Also, we will often use the following observation. For $\tilde{C}$ and $\hat{C}$ in Lemma \ref{sec4.3.lemma1}, $c(\tilde{C})$ and $\hat{C}$ share the same eigenvalues for any $K\in\calS_{p_1,p_2}^+$. Indeed, by the equivariance of the Kronecker MLE, 
\begin{align*}
   \hat{K}=k(S)=K^{1/2}k(\tilde{C})K^{1/2,\top}\equiv K^{1/2}\tilde{K}K^{1/2,\top}.
\end{align*}
Then any square root of $k(S)$ can be written as $K^{1/2}\tilde{K}^{1/2}O$ for some $O\in\calO_{p_1,p_2}$. Therefore, 
\begin{align}\label{secA.4.eq1}
\begin{split}
    c(\tilde{C})&=\tilde{K}^{-1/2}\tilde{C}\tilde{K}^{-1/2,\top},\\
c(S)&=\hat{C}=O^\top \tilde{K}^{-1/2}K^{-1/2}SK^{-1/2,\top}\tilde{K}^{-1/2,\top} O\\
&=O^\top  \tilde{K}^{-1/2}K^{-1/2}K^{1/2}\tilde{C}K^{1/2,\top}K^{-1/2,\top}\tilde{K}^{-1/2,\top} O\\
&=O^\top  \tilde{K}^{-1/2}\tilde{C}\tilde{K}^{-1/2,\top} O=O^\top c(\tilde{C})O,
\end{split}
\end{align}
proving the desired claim. Since $||\cdot||_2$ is orthogonally invariant, we also have that $||\tilde{K}-I_p||_2=O(a_n)$ with probability $1-o(1)$.\\

For the proof of Lemma \ref{sec4.3.lemma1}, we use the following ancillary result. 

\begin{lemma}\label{secA.4.lemma1}
For $A,B\in\calS_p^+$, let $A^{1/2}$ and $B^{1/2}$ denote the symmetric square roots of $A$ and $B$, respectively. Then
\begin{align*}
    ||A^{1/2}-B^{1/2}||_2\leq \frac{1}{\sqrt{\lambda_p(A)}+\sqrt{\lambda_p(B)}} ||A-B||_2. 
\end{align*}
\end{lemma}
\begin{proof}
  Suppose $x$ is a unit-norm eigenvector of $A^{1/2}-B^{1/2}$ associated with an eigenvalue $\eta$. Then it holds that 
  \begin{align*}
            x^\top(A-B)x&=x^\top (A^{1/2}-B^{1/2})A^{1/2}x+x^\top B^{1/2}(A^{1/2}-B^{1/2})x\\
      &=\tau x^\top A^{1/2}x+\tau x^\top B^{1/2}x.
  \end{align*}
  Note that $A^{1/2}-B^{1/2}$ takes either $\pm ||A^{1/2}-B^{1/2}||_2$ as an eigenvalue, and we assume $\eta= ||A^{1/2}-B^{1/2}||_2$ without loss of generality, since we can take $-x$ instead of $x$ if $\eta=-||A^{1/2}-B^{1/2}||_2$. Then by the above, 
  \begin{align*}
      x^\top(A-B)x &\geq||A^{1/2}-B^{1/2}||_2\parentheses{\lambda_p(A^{1/2})+\lambda_p(B^{1/2})}\\
      &=||A^{1/2}-B^{1/2}||_2\parentheses{\sqrt{\lambda_p(A)}+\sqrt{\lambda_p(B)}} .
\end{align*}
Thus,
\begin{align*}
    ||A^{1/2}-B^{1/2}||_2&\leq \frac{1}{\sqrt{\lambda_p(A)}+\sqrt{\lambda_p(B)}} x^\top(A-B)x\\
    &\leq \frac{1}{\sqrt{\lambda_p(A)}+\sqrt{\lambda_p(B)}}  ||A-B||_2,
\end{align*}
which concludes the proof.
\end{proof}

\begin{proof}[Proof of Lemma \ref{sec4.3.lemma1}.]
 By the equivariance of the Kronecker MLE, $K=I_p$. Also, in view of (\ref{secA.4.eq1}), we simply set $\tilde{K}^{1/2}$ as a symmetric square root of $\tilde{K}$. Under the assumption of \hyperlink{A3}{\textbf{(A3)}}, we have that $||\tilde{K}-I_p||_2=O_p(a_n)$ as discussed above. Thus,
 \begin{align*}
    ||\tilde{K}^{-1}-I_p||_2\leq ||\tilde{K}-I_p||_2/||\tilde{K}||_2\leq  ||\tilde{K}-I_p||_2/(1-||\tilde{K}-I_p||_2)=O_p(a_n).
\end{align*}
By Weyl's inequality and (\ref{secA.4.eq1}), 
\begin{align*}
    \max_{i\in [p]}|\lambda_i(\hat{C})-\lambda_i(\tilde{C})|&=\max_{i\in [p]}|\lambda_i(\tilde{K}^{-1/2}\tilde{C}\tilde{K}^{-1/2})-\lambda_i(\tilde{C})|\\
    &\leq ||\tilde{K}^{-1/2}\tilde{C}\tilde{K}^{-1/2}-\tilde{C}||_2\\
    &= ||(\tilde{K}^{-1/2}-I_p)\tilde{C}(\tilde{K}^{-1/2}-I_p)+\tilde{C}(\tilde{K}^{-1/2}-I_p)+(\tilde{K}^{-1/2}-I_p)\tilde{C}||_2\\
    &\leq ||\tilde{C}||_2||\tilde{K}^{-1/2}-I_p||_2^2+2||\tilde{C}||_2 ||\tilde{K}^{-1/2}-I_p||_2\\
&\leq  ||\tilde{C}||_2||\tilde{K}^{-1}-I_p||_2^2+2||\tilde{C}||_2 ||\tilde{K}^{-1}-I_p||_2=O_p(a_n),
\end{align*}
where the last inequality holds because $||\tilde{C}||_2=O_p(1)$ by the assumption, along with the result of Lemma \ref{secA.4.lemma1}.
\end{proof}

For the proof of Corollary \ref{sec4.3.cor1}, we use the result from \cite{gotze2011}. Recall that $F_{MP}^\gamma$ and $d_{KS}$ denote the distribution of the MP law and the Kolmogorov-Smirnov (KS) distance, respectively. Here $\gamma=\gamma_1\gamma_2$ for $\gamma_1,\gamma_2$ in \hyperlink{A1}{\textbf{(A1)}}.

\begin{lemma}\label{secA.4.lemma2}
 Assume the same as Corollary \ref{sec4.3.cor1} so that $\tilde{C}$ in Lemma \ref{sec4.3.lemma1} reduces to $1/n\sum_{i=1}^n z_iz_i^\top$ for $z_i$'s in (\ref{sec4.3.eq2}). If $F_{\tilde{C}}$ is the empirical spectral distribution of $\tilde{C}$, $d_{KS}(F_{\tilde{C}},F_{MP}^\gamma)=O_p(\log n/\sqrt{n})$.
\end{lemma}
\begin{proof}
 Since the given assumptions satisfy those of Theorem $1.1$ from \cite{gotze2011}, the result follows.
\end{proof}
Note that the rate in Lemma \ref{secA.4.lemma2} can be refined further by Theorem $1.1$ from \cite{gotze2011}. However, we take a more conservative rate for the result of Corollary \ref{sec4.3.cor1}.  
\begin{proof}[Proof of Corollary \ref{sec4.3.cor1}.]
Under the given assumptions, $\tilde{C}$ in Lemma \ref{sec4.3.lemma1} reduces to $1/n\sum_{i=1}^n z_iz_i^\top$ for $z_i$'s in (\ref{sec4.3.eq2}). Also, in view of (\ref{secA.4.eq1}), because $F_{\hat{C}}$ and $F_{\tilde{C}}$ depend on $\hat{C}$ and $\tilde{C}$ only through their eigenvalues, respectively, we shall assume $\hat{C}=\tilde{K}^{-1/2}\tilde{C}\tilde{K}^{-1/2}$ without loss of generality, where $\tilde{K}^{1/2}$ is a symmetric square root of $\tilde{K}=k(\tilde{C})$. Furthermore, we assume $\gamma:=\gamma_1\gamma_2\in(0,1]$ without loss of generality (see page $196$ 
of \cite{bai2012b} and page $2$ of \cite{gotze2011}).\\

Let $\delta_n:=||\hat{C}-\tilde{C}||_2$. We first claim that $F_{\hat{C}}(x)\leq F_{\tilde{C}}(x+\delta_n)$ for any $x\in \real$. If $x<0$, we are done. Also, if $x\geq \lambda_1(\hat{C})$, $F_{\hat{C}}(x)=1$. Also, by Weyls' inequality, 
\begin{align*}
    x+\delta_n\geq \lambda_1(\hat{C})+\delta_n\geq \lambda_1(\tilde{C}).
\end{align*}
Thus, $F_{\tilde{C}}(x)=1$. Hence, assume $x\in [0,\lambda_1(\hat{C}))$. Note that when $\gamma\leq 1$, both $\tilde{C}$ and $\hat{C}$ are strictly positive definite for sufficiently large $n$ with probability $1$. In fact, they share the same rank as $\hat{C}$ obtained by whitening $\tilde{C}$ via a non-singular matrix. Then there exists $i\in[p]$ such that $\lambda_{i+1}(\hat{C})\leq x<\lambda_{i}(\hat{C})$, where $\lambda_{p+1}(\hat{C})=0$, where $F_{\hat{C}}(x)=(p-i)/p$. Again by Weyls' inequality, 
\begin{align*}
      x+\delta_n\geq \lambda_{i+1}(\hat{C})+\delta_n\geq \lambda_{i+1}(\tilde{C}).
\end{align*}
This is true even when $i=p$, where $\lambda_{p+1}(\tilde{C})=0$. Hence, $F_{\tilde{C}}(x+\delta_n)\geq (p-i)/p=F_{\hat{C}}(x)$. \\

Next, assume $0\leq F_{\hat{C}}(x)-F_{\tilde{C}}(x)$ without loss of generality, which we will justify below. By the previous claim, 
\begin{align*}
0\leq F_{\hat{C}}(x)-F_{\tilde{C}}(x)&\leq F_{\tilde{C}}(x+\delta_n)-F_{\tilde{C}}(x)\\
&\leq   (F_{\tilde{C}}(x+\delta_n)-F_{MP}^{\gamma}(x+\delta_n))+(F_{MP}^{\gamma}(x+\delta_n)-F_{MP}^{\gamma}(x))+(F_{MP}^{\gamma}(x)-F_{\tilde{C}}(x))\\
&\leq 2\sup_{x\in\real}|F_{\tilde{C}}(x)-F_{MP}^{\gamma}(x)|+(F_{MP}^{\gamma}(x+\delta_n)-F_{MP}^{\gamma}(x)).
\end{align*}
By Lemma \ref{secA.4.lemma2}, $\sup_{x\in\real}|F_{\tilde{C}}(x)-F_{MP}^{\gamma}(x)|=O_p(\log n/\sqrt{n})$. On the other hand, if $\gamma\in (0,1]$, the density of the MP law with parameter $\gamma$, dominated by Lebesgue measure, is bounded on $\real$. Denote the upper bound by $L$. Thus, $F_{MP}^{\gamma}$ is Lipschitz continous so that 
\begin{align*}
    F_{MP}^{\gamma}(x+\delta_n)-F_{MP}^{\gamma}(x)\leq L\delta_n=O_p(\delta_n).
\end{align*}
By \hyperlink{A3}{\textbf{(A3)}} with $\lambda=1$, $O_p(\delta_n)=O_p(\log n/\sqrt{n})$ by Lemma \ref{sec4.3.lemma1}. If $ F_{\hat{C}}(x)-F_{\tilde{C}}(x)<0$, note that $F_{\hat{C}}(x)\geq F_{\tilde{C}}(x-\delta_n)$ by changing the role of $F_{\hat{C}}$ and $F_{\tilde{C}}$ above. Hence,
\begin{align*}
    0\leq F_{\tilde{C}}(x)- F_{\hat{C}}(x)&\leq F_{\tilde{C}}(x)-F_{\tilde{C}}(x-\delta_n)\\
    &\leq   (F_{\tilde{C}}(x)-F_{MP}^{\gamma}(x))+(F_{MP}^{\gamma}(x)-F_{MP}^{\gamma}(x-\delta_n))+(F_{MP}^{\gamma}(x-\delta_n)-F_{\tilde{C}}(x-\delta_n))\\
&\leq 2\sup_{x\in\real}|F_{\tilde{C}}(x)-F_{MP}^{\gamma}(x)|+(F_{MP}^{\gamma}(x)-F_{MP}^{\gamma}(x-\delta_n))
\end{align*}
so that $0\leq F_{\tilde{C}}(x)- F_{\hat{C}}(x)=O_p(\log n/\sqrt{n})$. Since the upper bound for $|F_{\tilde{C}}(x)- F_{\hat{C}}(x)|$ is uniform in $x$, $d_{KS}(\tilde{C},\hat{C})=\sup_{x\in\real }|F_{\tilde{C}}(x)- F_{\hat{C}}(x)|=O_p(\log n/\sqrt{n})$. 
\end{proof}

To proceed with the proof of Theorem \ref{sec4.3.thm1}, we introduce an additional condition for edge universality of $\tilde{C}$ in Lemma \ref{sec4.3.lemma1} (see \cite{bao2015,lee2016}).

\begin{cond}\label{secA.4.cond1}
Suppose $\Sigma\in\calS_p^+$. Let $\xi_{+,\Sigma}$ be that in Section \ref{sec4.3:asymp.null} with $\Omega=\Sigma$. Assuming  $p/n$ converges to some positive constant as $n\rightarrow\infty$, $\Sigma$ satisfies $0<\liminf_{n}\lambda_p(\Sigma)\leq \limsup_{n}\lambda_1(\Sigma)<\infty$, and $\limsup_{n}\lambda_1(\Sigma)\xi_{+,\Sigma}<1$.
\end{cond}

The following lemma is useful to prove Theorem \ref{sec4.3.thm1}.

\begin{lemma}\label{secA.4.lemma3}
Suppose $P,Q\in\calS_{p}^+$ and $Q$ satisfies Condition \ref{secA.4.cond1}. Assume $p/n$ converges to some positive constant as $n\rightarrow\infty$, $||P-Q||_2=O(a_n)$ for some positive sequence $\set{a_n}_{n=1}^\infty$ with $a_n=o(1)$, and suppose $c_1<\xi_{+,Q},\gamma_{0,Q}<c_2$ for some positive constants $c_1,c_2$. Then $P$ also satisfies Condition \ref{secA.4.cond1} and it holds that  
\begin{align}\label{secA.4.lemma3.eq1}
    \gamma_{0,P}=\gamma_{0,Q}+o(1), \quad E_{+,P}=E_{+,Q}+O(a_n)
\end{align}
\end{lemma}
\begin{proof}
To verify that $0<\liminf_{n}\lambda_p(Q)\leq \limsup_{n}\lambda_1(Q)<\infty$, observe that $\lambda_p(Q)-||P=Q||_2\leq \lambda_p(P)\leq \lambda_1(P)\leq \lambda_1(Q)+||P-Q||_2$ by Weyl's inequality. Since $||P-Q||_2=o(1)$ by the assumption, the result follows.\\

We verify the condition that $\limsup_{n}\lambda_1(P)\xi_{+,P}<1$ and (\ref{secA.4.lemma3.eq1}) simultaneously, proving the claim. We invoke the proof of Lemma $1$--$2$ of \cite{han2016}. Suppose $\alpha_i=\lambda_i(P)$ and $\beta_i=\lambda_i(Q)$ for $i\in [p]$. Note that by Weyl's inequality, $\max_{i\in[p]}|\alpha_i-\beta_i|\leq ||P-Q||_2=O(a_n)$. Let 
\begin{align*}
    f_{P}(x)\equiv\frac{1}{p}\int \parentheses{\frac{tx}{1-tx}}^2 dF_{P}(t)=\frac{1}{p}\sum_{i=1}^p\parentheses{\frac{\alpha_ix}{1-\alpha_ix}}^2.
\end{align*}
Define $f_Q(x)$ similarly. Note that $\xi_{+,Q}\in [0,\beta_1^{-1})$ and $f_Q(\xi_{+,Q})=1/\hat{\gamma}=n/p$. Then
\begin{align*}
  |f_P(\xi_{+,Q})-n/p|&=|f_P(\xi_{+,Q})-f_Q(\xi_{+,Q})|\\
  &= \frac{1}{p}\left|\sum_{i=1}^p\parentheses{\parentheses{\frac{\xi_{+,Q}}{1/\alpha_i-\xi_{+,Q}}}^2-\parentheses{\frac{\xi_{+,Q}}{1/\beta_i-\xi_{+,Q}}}^2} \right|\\
  &\leq \frac{\xi_{+,Q}^2}{p}\max_{i\in [p]}\set{\frac{|1/\alpha_i+1/\beta_i-2\xi_{+,Q}|}{(1/\alpha_i-\xi_{+,Q})^2(1/\beta_i-\xi_{+,Q})^2}} \sum_{i=1}^p |1/\alpha_i-1/\beta_i|\\
  &\leq \frac{\xi_{+,Q}^2}{p}\max_{i\in [p]}\set{\frac{|1/\alpha_i-1/\beta_i|+2|1/\beta_i-\xi_{+,Q}|}{(1/\alpha_i-\xi_{+,Q})^2(1/\beta_i-\xi_{+,Q})^2}} \sum_{i=1}^p |1/\alpha_i-1/\beta_i|.
\end{align*}
To bound further, note that
\begin{align*}
    |1/\alpha_i-1/\beta_i|=\frac{|\alpha_i-\beta_i|}{|\beta_i^2+\beta_iO(a_n)|}\leq \frac{||P-Q||_2}{\beta_p^2+\beta_pO(a_n)}=O(a_n)
\end{align*}
as $0<\liminf_n \beta_p<\infty$ by Condition \ref{secA.4.cond1}. Consequently, 
\begin{align}\label{secA.4.lemma3.eq12}
    1/p\sum_{i=1}^p |1/\alpha_i-1/\beta_i|=O(a_n).
\end{align}
Also,
\begin{align*}
\max_{i\in [p]}\set{\frac{|1/\alpha_i-1/\beta_i|+2|1/\beta_i-\xi_{+,Q}|}{(1/\alpha_i-\xi_{+,Q})^2(1/\beta_i-\xi_{+,Q})^2}}&\leq \max_{i\in [p]}\frac{|1/\alpha_i-1/\beta_i|}{(1/\alpha_i-\xi_{+,Q})^2(1/\beta_i-\xi_{+,Q})^2}\\
&+2\max_{i\in [p]}\frac{|1/\beta_i-\xi_{+,Q}|}{(1/\alpha_i-\xi_{+,Q})^2(1/\beta_i-\xi_{+,Q})^2}\\
&=:(\rom{1})+(\rom{2}).
\end{align*}
Observe that 
\begin{align*}
    (\rom{1})&\leq \max_{i\in [p]}|1/\alpha_i-1/\beta_i| \cdot\max_{i\in [p]}\frac{1}{(1/\alpha_i-\xi_{+,Q})^2(1/\beta_i-\xi_{+,Q})^2}\\
    &=O(a_n)\max_{i\in [p]}\frac{1}{(1/\beta_i-\xi_{+,Q}+O(a_n))^2(1/\beta_i-\xi_{+,Q})^2}\\
    &=\beta_1^2 O(a_n)\max_{i\in [p]}\frac{1}{(\beta_1/\beta_i-\beta_1\xi_{+,Q}+O(a_n))^2(\beta_1/\beta_i-\beta_1\xi_{+,Q})^2}.
\end{align*}
Since $\beta_1/\beta_i\geq 1$ and $\limsup_{n}\beta_1\xi_{+,Q}<1$, we have $\max_{i\in [p]}1/(\beta_1/\beta_i-\beta_1\xi_{+,Q})^2=O(1)$. Moreover, $\beta_1=O(1)$ by Condition \ref{secA.4.cond1}. Therefore, $(\rom{1})=O(a_n)$. Similarly, one can observe that $(\rom{2})=O(1)$. Consequently, 
\begin{align*}
    \max_{i\in [p]}\set{\frac{|1/\alpha_i-1/\beta_i|+2|1/\beta_i-\xi_{+,Q}|}{(1/\alpha_i-\xi_{+,Q})^2(1/\beta_i-\xi_{+,Q})^2}}=O(1).
\end{align*}
Since there exist positive constants $c_1,c_2$ for which $c_1<\xi_{+,Q}<c_2$, we consequently have that
\begin{align*}
    |f_{P}(\xi_{+,Q})-n/p|=O(a_n).
\end{align*}
For sufficiently large $n$, suppose $0<\xi_{+,Q}-||P-Q||_2<x<\xi_{+,Q}+||P-Q||_2$. If $n$ is large enough, then 
\begin{align*}
  \xi_{+,Q}+||P-Q||_2=\xi_{+,Q}\beta_1(1/\beta_1-1/\alpha_1)+\xi_{+,Q}\beta_1/\alpha_1+O(a_n)<1/\alpha_1
\end{align*}
as $\limsup_{n}\xi_{+,Q}\beta_1<1$ and $1/\beta_1-1/\alpha_1=O(a_n)$. Also, for such $x$,
\begin{align*}
    |f_P(\xi_{+,Q})-f_P(x)|&=\frac{1}{p}\left|\sum_{i=1}^p \frac{(\xi_{+,Q}^2-x^2)/\alpha_i^2-2\xi_{+,Q}x/\alpha_i(\xi_{+,Q}-x)}{(1/\alpha_i-\xi_{+,Q})^2(1/\alpha_i-x)^2}\right|\\
    &\leq|\xi_{+,Q}-x|\frac{1}{p}\left|\sum_{i=1}^p \frac{(\xi_{+,Q}+x)/\alpha_i^2}{(1/\alpha_i-\xi_{+,Q})^2(1/\alpha_i-x)^2}\right|\\
    &+|\xi_{+,Q}-x|\frac{1}{p}\left|\sum_{i=1}^p \frac{\xi_{+,Q}x/\alpha_i}{(1/\alpha_i-\xi_{+,Q})^2(1/\alpha_i-x)^2}\right|.
\end{align*}
Noting that $\lim_n \beta_i/\alpha_i=1$ for each $i$, $\limsup_{n}\beta_1\xi_{+,Q}<1$, and $\beta_1=O(1)$, $\liminf_n\beta_p>0$, we see that 
\begin{align*}
    \xi_{+,Q}+x,\xi_{+,Q}x,\max_{i\in [p]}\frac{1}{\alpha_i},\max_{i\in [p]}\frac{1}{(1/\alpha_i-\xi_{+,Q})^2},\max_{i\in [p]}\frac{1}{(1/\alpha_i-x)^2}=O(1).
\end{align*}
Consequently, $|f_P(\xi_{+,Q})-f_P(x)|=O(a_n)$. For any sufficiently large $n$, since
\begin{align*}
    f_{P}'(x)>\frac{2}{x}f_{P}(x)>\frac{2}{\xi_{+,Q}+O(a_n)}f_P(x),
\end{align*}
$|f_P(\xi_{+,Q})-f_P(x)|,|f_P(\xi_{+,Q})-n/p|=O(a_n)$ implies that 
$f_P'(x)>M$ for some constant $M$ with $x\in(0,\alpha_1^{-1})$. Since $f_P(x)$ is continuous on $(0,\alpha_1^{-1})$, the mean value theorem leads to 
\begin{align*}
    f_P(\xi_{+,Q}-||P-Q||_2)< n/p <  f_P(\xi_{+,Q}+||P-Q||_2).
\end{align*}
for any sufficiently large $n$. Thus, the unique solution $\xi_{+,P}\in [0,\alpha_1^{-1})$ of $f_P(x)=n/p$ in $x$ satisfies $|\xi_{+,Q}-\xi_{+,P}|=O(a_n)$. Furthermore,
\begin{align*}
    \limsup_n \xi_{+,P}\alpha_1\leq \limsup_n(\beta_1+||P-Q||_2)(\xi_{+,Q}+O(a_n))=\limsup_n \beta_1\xi_{+,Q}<1.
\end{align*}
Hence, for large enough $n$, the assumption on $\xi_{+,Q}$ implies that $\xi_{+,P}$ is bounded away from $0$ and above. Therefore,
\begin{align*}
    |E_{+,P}-E_{+,Q}&|\leq \left|\frac{1}{\xi_{+,P}}-\frac{1}{\xi_{+,Q}}\right|+\frac{1}{n}\left|\sum_{i=1}^p \frac{1}{1/\alpha_i-\xi_{+,P}}-\frac{1}{1/\beta_i-\xi_{+,Q}}\right|\\
    &\leq \frac{|\xi_{+,P}-\xi_{+,Q}|}{\xi_{+,P}\xi_{+,Q}}+\frac{O(a_n)}{n}\sum_{i=1}^p\left|\frac{1}{(1/\beta_i-\xi_{+,Q})(1/\beta_i-\xi_{+,Q}+O(a_n))}\right|\\
    &=O(a_n).
\end{align*}
Similarly, 
\begin{align*}
    \left|\frac{1}{\gamma_{0,P}^3}-\frac{1}{\gamma_{0,Q}^3}\right|=O(a_n),
\end{align*}
concluding the proof.
\end{proof}

Note that if $Q=I_p$ or $1+\sqrt{\gamma}>\lambda_1(Q)\geq \cdots \geq \lambda_r(Q)>\lambda_{r+1}(Q)=\cdots=\lambda_p(Q)=1$ for $r\in \bbN$ fixed in $(p_1,p_2,n)$ and the limit $\gamma$ of $p/n$ in $n$, the condition that $c_1<\gamma_{0,Q},\xi_{+,Q}<c_2$ is readily met. For the latter case, see \cite{bao2015,feral2009}.

\begin{proof}[Proof of Theorem \ref{sec4.3.thm1}.]
For simplicity, let $\tilde{T}_1(Y):=\gamma_0n^{2/3}(T_1(Y)-\hat{E}_+)$. Under the given assumptions, it suffices to prove that 
\begin{align*}
    \lim_{n\rightarrow}\bbP(\tilde{T}_1(Y)\leq x) =F_1(x)
\end{align*}
for any $x\in\real$ and the distribution function $F_1$ of $TW_1$. Again, by the equivariance of the Kronecker MLE (separable component), we assume $K=I_p$ in \hyperlink{A2}{\textbf{(A2)}}. Also, as in the proofs of Lemma \ref{sec4.3.lemma1} and Corollary \ref{sec4.3.cor1}, and (\ref{secA.4.eq1}), we assume $\hat{K}=\tilde{K}$ and $\hat{C}=\tilde{K}^{-1/2}\tilde{C}\tilde{K}^{-1/2}$ for symmetric square root $\tilde{K}^{1/2}$ of $\tilde{K}=k(\tilde{C})$ without loss of generality. Denote the event that $||\tilde{K}-I_p||_2=O(a_n)$ by $E_n$. Under \hyperlink{A3}{\textbf{(A3)}}, $\bbP(E_n)=1-o(1)$. For given $x\in\real$, by the law of iterated expectation,
\begin{align*}
  \bbP(\tilde{T}_1(Y)\leq x)&= \bbP(\tilde{T}_1(Y)\leq x,E_n)+ \bbP(\tilde{T}_1(Y)\leq x,E_n^c)\\
  &=\bbE[\bbP(\tilde{T}_1(Y)\leq x,E_n|\tilde{K})]+ \bbP(\tilde{T}_1(Y)\leq x,E_n^c)
\end{align*}
Observe that $0\leq \bbP(\tilde{T}_1(Y)\leq x,E_n^c)\leq \bbP(E_n^c)=o(1)$. Since the probability is always in $[0,1]$, by dominated convergence theorem, it suffices to show that 
\begin{align}\label{secA.4.eq2}
 \lim_{n\rightarrow}\bbP(\tilde{T}_1(Y)\leq x,E_n|\tilde{K})=F_1(x).
\end{align}
Let $Z=[z_1,\ldots,z_n] \in\real^{p\times n}$ for $z_i$'s in (\ref{sec4.3.eq2}) and \hyperlink{A2}{\textbf{(A2)}}. Then with $C^{1/2}=UDV^\top$, 
\begin{align*}
    T_1(Y)=\lambda_1(\hat{C})=\lambda_1(\tilde{K}^{-1/2}C^{1/2}ZZ^\top C^{1/2,\top}\tilde{K}^{-1/2}n)=\lambda_1(Z^\top C^{1/2,\top} \tilde{K}^{-1}C^{1/2}Z/n).
\end{align*}
On the event $E_n$, we have that $||\tilde{K}^{-1}-I_p||_2=O(a_n)$.  Hence, we prove (\ref{secA.4.eq2}) by arguing that if $||\tilde{K}^{-1}-I_p||_2=O(a_n)$ and $C$ satisfies one of (\rom{1})--(\rom{3}) in Theorem \ref{sec4.3.thm1}, 
\begin{align}\label{secA.4.eq3}
    \gamma_0n^{2/3}(\lambda_1(Z^\top C^{1/2,\top} \tilde{K}^{-1}C^{1/2}Z/n)-\hat{E}_+)|\tilde{K}\Rightarrow TW_1.
\end{align}
We prove the above claim for each of conditions (\rom{1})--(\rom{3}). Note that if $\lambda$ and $\alpha$ satisfy either (\rom{2}) or (\rom{3}), we have $\lambda\rightarrow 1$ as $n\rightarrow\infty$, whereas $\lambda=1$ in (\rom{1}). For (\rom{3}), \hyperlink{A4}{\textbf{(A4)}} ensures this. Throughout the proof below, we will write $\hat{\gamma}_0:=\gamma_{0,C^{1/2,\top} \tilde{K}^{-1}C^{1/2}}$ and $\hat{E}_+:=E_{+,C^{1/2,\top} \tilde{K}^{-1}C^{1/2}}$ for $\gamma_{0,\Omega}$ and $E_{+,\Omega}$ defined in Section \ref{sec4.3:asymp.null} with $\Omega=C^{1/2,\top} \tilde{K}^{-1}C^{1/2}$.  \\

\noindent(\rom{1}): In this case, $C^{1/2}=UV^\top$ so that $C^{1/2}$ is orthogonal. Hence, given $\tilde{K}$, $||C^{1/2,\top} \tilde{K}^{-1}C^{1/2}-I_p||_2=||\tilde{K}^{-1}-I_p||_2=O(a_n)$. By Lemma \ref{secA.4.lemma3} with $Q=I_p$, $C^{1/2,\top}\tilde{K}^{-1}C^{1/2}$ satisfies Condition \ref{secA.4.cond1} and given $\tilde{K}$,
\begin{align*}
    \hat{\gamma}_0=\gamma_0+o(1),\quad \hat{E}_+=E_+ +O(a_n).
\end{align*}
Note that $\hat{E}_+$ converges to $E_+$ in probability unconditionally on $\tilde{K}$. Since $Z$ matches $N(0,1)$ up to order $4$ and \hyperlink{A1}{\textbf{(A1)}}--\hyperlink{A3}{\textbf{(A3)}} hold, combining Theorem $1.3$ of \cite{bao2015} and Corollary $2.7$ of \cite{lee2016} leads to 
\begin{align*}
    \hat{\gamma}_0n^{2/3}(\lambda_1(Z^\top C^{1/2,\top} \tilde{K}^{-1}C^{1/2}Z/n)-\hat{E}_+)|\tilde{K}\Rightarrow TW_1.
\end{align*}
Then (\ref{secA.4.eq3}) can be concluded using Slutsky's theorem as $\gamma_0/\hat{\gamma}_0$ converges to $1$ in probability.\\

\noindent(\rom{2}): Recall that $\lambda\rightarrow 1$ as $n\rightarrow\infty$ in this case. By \hyperlink{A4}{\textbf{(A4)}}, for each $i=1,\ldots,r$, 
\begin{align*}
    \frac{\lambda_i(C)}{\lambda}=\frac{1-\lambda}{\lambda}\sigma_i(A)^2+1=O(n^{-\alpha+1})+1 \Rightarrow \frac{\lambda_i(C)-\lambda}{\lambda}=O(n^{-\alpha+1}).
\end{align*}
where $O(n^{-\alpha+1})=o(1)$ if $\alpha\in(1,\infty)$. Hence, if (\rom{2}) holds, 
\begin{align*}
    ||C/\lambda-I_p||_2=||C^{1/2,\top}C^{1/2}-\lambda I_p||_2/\lambda=\max_{i\in[r]}| \lambda_i(C)-\lambda|/\lambda=O(n^{-\alpha+1}).
\end{align*}
Consequently, given $\tilde{K}$,
\begin{align*}
    ||C^{1/2,\top}\tilde{K}^{-1}C^{1/2}/\lambda-I_p||_2&\leq ||C^{1/2,\top}(\tilde{K}^{-1}-I_p)C^{1/2}||_2/\lambda+||C^{1/2,\top}C^{1/2}-\lambda I_p||_2/\lambda\\
    &\leq ||\tilde{K}^{-1}-I_p||_2||C||_2/\lambda +||C^{1/2,\top}C^{1/2}-\lambda I_p||_2/\lambda\\
    &=O(\max\set{a_n,n^{-\alpha+1}}).
\end{align*}
Thus,
\begin{align*}
      ||C^{1/2,\top}\tilde{K}^{-1}C^{1/2}-I_p||_2&=\lambda    ||C^{1/2,\top}\tilde{K}^{-1}C^{1/2}/\lambda-1/\lambda I_p||_2\\
      &\leq\lambda    ||C^{1/2,\top}\tilde{K}^{-1}C^{1/2}/\lambda-I_p||_2+(1-\lambda)=O(\max\set{a_n,n^{-\alpha+1}}).
\end{align*}
as $1-\lambda\asymp n^{-\alpha}$. Therefore, by Lemma \ref{secA.4.lemma3} with $Q=I_p$, $C^{1/2,\top}\tilde{K}^{-1}C^{1/2}$ satisfies Condition \ref{secA.4.cond1} and given $\tilde{K}$,
\begin{align*}
    \hat{\gamma}_0=\gamma_0+o(1),\quad \hat{E}_+=E_+ +O(\max\set{a_n,n^{-\alpha+1}}).
\end{align*}
Again, $\hat{E}_+$ converges to $E_+$ in probability unconditionally on $\tilde{K}$. Also, by the moment-matching condition on the entries of $z_i$'s and \hyperlink{A1}{\textbf{(A1)}}--\hyperlink{A4}{\textbf{(A4)}}, Theorem $1.3$ of \cite{bao2015} and Corollary $2.7$ of \cite{lee2016} imply (\ref{secA.4.eq3}).\\

\noindent(\rom{3}): Again, note that $\lambda\rightarrow 1$ as $n\rightarrow\infty$. By the choice of $\lambda$, $C/\lambda$ and $C^{1/2,\top}C^{1/2}/\lambda$ share the same eigenvalues with $r$ spiked eigenvalues and the non-spiked eigenvalues all equal to $1$. Moreover, $\lambda_1(C)/\lambda=\lambda_1(C^{1/2,\top}C^{1/2})/\lambda=1+c$ with $c<\sqrt{\gamma_1\gamma_2}$. By Lemma $4.2$ of \cite{bao2015}, $C^{1/2,\top}C^{1/2}/\lambda$ satisfies Condition \ref{secA.4.cond1} and we have that  
\begin{align*}
    \gamma_{0,C^{1/2,\top}C^{1/2}/\lambda}=\gamma_0+o(1),\quad E_{0,C^{1/2,\top}C^{1/2}/\lambda}=E_++O(n^{-1}).
\end{align*}
Moreover,
\begin{align*}
    ||C^{1/2,\top}\tilde{K}^{-1}C^{1/2}-C^{1/2,\top}C^{1/2}/\lambda||_2&\leq  ||C^{1/2,\top}(\tilde{K}^{-1}-I_p)C^{1/2}||_2+(1-\lambda)/\lambda ||C||_2\\
    &\leq ||C||_2 ||\tilde{K}^{-1}-I_p||_2+(1-\lambda)/\lambda ||C||_2=O(a_n),
\end{align*}
where the last equality holds because $||C||_2=1+c$ and $1-\lambda\asymp n^{-1}$, which is absorbed into $O(a_n)$. By Lemma \ref{secA.4.lemma3} with $Q=C^{1/2,\top}C^{1/2}/\lambda$, $C^{1/2,\top}\tilde{K}^{-1}C^{1/2}$ satisfies Condition \ref{secA.4.cond1} and given $\tilde{K}$,
\begin{align*}
    \hat{\gamma}_0&=\gamma_{0,C^{1/2,\top}C^{1/2}/\lambda}+o(1)=\gamma_0+o(1)\\
    \hat{E}_+&=E_{0,C^{1/2,\top}C^{1/2}/\lambda}+O(a_n)=E_{+}+O(a_n),
\end{align*}
as $O(n^{-1})$ is absorbed into $O(a_n)$. Again, $\hat{E}_+$ converges to $E_+$ in probability unconditionally on $\tilde{K}$. Therefore, applying the moment-matching condition on $Z$ and \hyperlink{A1}{\textbf{(A1)}}--\hyperlink{A4}{\textbf{(A4)}}, (\ref{secA.4.eq3}) is implied by Theorem $1.3$, Corollary $1.7$, and Lemma $4.2$ of \cite{bao2015} and Corollary $2.7$ of \cite{lee2016}.
\end{proof}

Finally, we prove Proposition \ref{sec4.3.prop1}. To introduce the additional notation, for a sequence of random variables $\set{X_n}_{n=1}^\infty$ and $\set{Y_n}_{n=1}^\infty$, if both $X_n/Y_n=O_p(1)$ and $Y_n/X_n=O_p(1)$ hold, write $X_n\asymp_p Y_n$.

\begin{proof}[Proof of Proposition \ref{sec4.3.prop1}.]
From the proof of Theorem \ref{sec4.3.thm1}, recall that for $Z=[z_1,\ldots,z_n]$ with $z_i$'s in (\ref{sec4.3.eq2}) and $\tilde{C}=ZZ^\top/n$, $\tilde{K}=k(\tilde{C})$. Throughout the proof, we will write $\beta_n(\varphi):=\bbP_{I,C}(\varphi(Y)=1)$ instead of $\bbP(\varphi(Y)=1|I,C)$, not to be confused with the conditioning notation below. Here $\varphi$ is either $\phi_1^1$ or $\phi_1^2$. Also, recall that $\hat{E}_+:=E_{+,\tilde{K}^{-1}}$ and $E_+=E_{+,I_p}$. Since $||\tilde{K}^{-1}-I_p||_2=O_p(a_n)$ by \hyperlink{A4}{\textbf{(A4)}}, Lemma \ref{secA.4.lemma3} with $Q=I_p$ implies that $\hat{E}_+=E_++O(a_n)$ given $\tilde{K}$. Lastly, let 
\begin{align}\label{sec4.3.prop1.eq1}
    \psi=\lambda_1(C)\parentheses{1+\gamma\frac{\lambda}{\lambda_1(C)-\lambda} }
\end{align}
for $\gamma=\gamma_1\gamma_2$. The proof is based on the result of \cite{jiang2021,jiang2021b,cai2020}, particularly their Theorem $3.1$--$3.2$ and Corollary $3.1$. Note that our assumptions satisfy Assumption (a)--(c) of \cite{jiang2021b}. In particular, their assumption (b) is satisfied by the sub-exponential tail assumption in \hyperlink{A2}{\textbf{(A2)}}. We also need the result of \cite{cai2020} when $\lambda_1(C)$ diverges, where their Assumption $1$--$2$ are met by our assumptions.\\ 

\noindent(\rom{1}): Note that 
\begin{align}\label{sec4.3.prop1.eq2}
    \lambda_1(C)=(1-\lambda)\sigma_1^2(A)+\lambda=p(1-\lambda)\sigma_1^2(A)/p+\lambda\asymp n^{-\alpha+1}.
\end{align}
Thus, $\lambda_1(C)$ diverges as $n$ grows. Also, $\psi\asymp n^{-\alpha+1}$. Observe that
\begin{align*}
  T_1^1(Y)=\gamma_0n^{2/3}(\lambda_1(\hat{C})-E_+)&=\lambda_1(\tilde{C})\gamma_0n^{2/3}\parentheses{\frac{\lambda_1(\hat{C})-\lambda_1(\tilde{C})}{\lambda_1(\tilde{C})}}+\lambda_1(\tilde{C})\gamma_0n^{2/3}\parentheses{1-\frac{E_+}{\lambda_1(\tilde{C})}}\\
  &=:(1)+(2).
\end{align*}
By Theorem $2.1$ of \cite{cai2020} and (\ref{sec4.3.prop1.eq2}), $\lambda_1(\tilde{C})/\lambda_1(C)-1=O_p(n^{-\alpha+1})$. Thus, $E_+/\lambda_1(\tilde{C})=o_p(1)$ and $(2)\asymp_p n^{5/3-\alpha}$. For (1), from the proof of Lemma \ref{sec4.3.lemma1}, one can deduce that $|\lambda_1(\hat{C})-\lambda_1(\tilde{C})|/\lambda_1(\tilde{C})=O_p(a_n)$. Hence, $(1)=O_p(n^{5/3-\alpha-\delta}\log n)=o_p(n^{5/3-\alpha})$ for a fixed constant $\delta$ as in \hyperlink{A3}{\textbf{(A3)}}. Therefore, if $C_{1,n}=o(n^{}5/3-\alpha)$, 
\begin{align*}
     \beta_n(\phi_1^1)=\bbP_{I,C}(\phi_1^1(Y)=1)=\bbP_{I,C}(\gamma_0n^{2/3}(\lambda_1(\hat{C})-E_+)>C_{1,n})\rightarrow 1.
\end{align*}
For the consistency of $\phi_1^2$, denote the event that $||\tilde{K}-I_p||_2=O(a_n)$ by $E_n$.Then by the law of iterated expectation,
\begin{align}\label{sec4.3.prop1.eq3}
    \beta_n(\phi_1^2)=\bbP_{I,C}(\phi_1^2(Y)=1)=\bbE[\bbP_{I,C}(\phi_1^2(Y)=1,E_n|\tilde{K})]+\bbP_{I,C}(\phi_1^2(Y)=1,E_n^c).
\end{align}
The latter term is $o(1)$ by \hyperlink{A3}{\textbf{(A3)}}. Hence, the claim is proved if $\bbP_{I,C}(\phi_1^1(Y)=1,E_n|\tilde{K})$ converges to $1$ by the dominated convergence theorem. Given $\tilde{K}$, $\hat{E}_+=E_++O(a_n)$, a similar analysis shows that $\bbP_{I,C}(\phi_1^1(Y)=1,E_n|\tilde{K})\rightarrow 1$ as $n$ grows.\\  

\noindent(\rom{2}): The proof is similar to the case of (\rom{1}). In view of (\ref{sec4.3.prop1}), it suffices to prove the consistency for the test $\phi_1^1$. Observe that $\psi$ in (\ref{sec4.3.prop1.eq1}) becomes $(1+c+\gamma(1+c)/c)\lambda>E_+$ for sufficiently large $n$, which converges to $1+c+\gamma(1+c)/c$ as $1-\lambda\asymp n^{-1}$. Also, note that $\lambda_1(C)/\lambda=1+c$. Hence, by \cite{baik2006,baik2005,jiang2021,bai2012}, we have that 
\begin{align*}
    ||\tilde{C}||_2/\lambda \overset{a.s.}{\rightarrow}\begin{cases}
        (1+\sqrt{\gamma_1\gamma_2})^2,&\quad c\leq \sqrt{\gamma_1\gamma_2},\\
        1+c+\gamma_1\gamma_2(1+c)/c,&\quad c>\sqrt{\gamma_1\gamma_2}
    \end{cases}.
\end{align*}
Because $\lambda$ converges to $1$ under the given assumption, $||\tilde{C}||_2=O_p(1)$. By Theorem $2$ of \cite{jiang2021b}, we have that $n^{1/2}(\lambda_1(\tilde{C})-\psi)=O_p(1)$. Therefore,
\begin{align*}
    T_1^1(Y)&=\gamma_0n^{2/3}(\lambda_1(\hat{C})-\lambda_1(\tilde{C})+\gamma_0n^{1/6}\cdot n^{1/2}(\lambda_1(\tilde{C})-\psi)+\gamma_0n^{2/3}(\psi-E_+)\\
    &=:(4)+(5)+(6).
\end{align*}
Since $||\tilde{C}||_2=O_p(1)$, Lemma \ref{sec4.3.lemma1} implies that $(4)=O_p(n^{2/3-\delta}\log n)$. Also, Theorem $2$ of \cite{jiang2021b} implies $(5)=O_p(n^{1/6})$. Since $(6)\asymp n^{2/3}=n^{5/3-\alpha}$ as $\alpha=1$, we conclude that $\beta_n(\phi_1^1)\rightarrow1$ as $n\rightarrow\infty$ as in the proof of (\rom{1}).

\end{proof}

\subsection{Proofs of the Results from Section \ref{sec4.4:consist}}\label{secA.5}

We prove Proposition \ref{sec4.4.prop1}.

\begin{proof}[Proof of Proposition \ref{sec4.4.prop1}.]
  From the proofs of the results in Section \ref{sec4.3:asymp.null}, deduce that 
  \begin{align*}
      \hat{C}=\tilde{K}^{-1/2}\tilde{C}\tilde{K}^{-1/2,\top}
  \end{align*}
for $\tilde{C}=1/n\sum_{i=1}^n z_iz_i^\top$ and $\tilde{K}=k(\tilde{C})$. Since $||\cdot||_F$ is orthogonally invariant, we assume $\tilde{K}^{1/2}$ is symmetric without loss of generality so that $\hat{C}=\tilde{K}^{-1/2}\tilde{C}\tilde{K}^{-1/2}.$ Again, under the assumption of \hyperlink{A3}{\textbf{(A3)}}, $||\tilde{K}^{-1}-I_p||_2,||\tilde{K}^{-1/2}-I_p||_2=O_p(a_n)$ from the proof of Lemma \ref{sec4.3.lemma1}. Observe that
\begin{align*}
    \frac{||\hat{C}||_F^2}{p}=\frac{||\hat{C}-\tilde{C}||_F^2+2\tr\parentheses{\tilde{C}(\hat{C}-\tilde{C})}+||\tilde{C}||_F^2}{p}.
\end{align*}
We verify that 
\begin{align*}
    ||\hat{C}-\tilde{C}||_F^2/p=o_p(1),\quad \tr\parentheses{\tilde{C}(\hat{C}-\tilde{C})}/p=o_p(1). 
\end{align*}
Under the given assumption on $z_i$'s and \hyperlink{A1}{\textbf{(A1)}}, Theorem $2.2$ of \cite{heiny2023} implies that 
\begin{align*}
   ||\tilde{C}||_F^2/p\overset{p}{\rightarrow}\gamma_1\gamma_2.
\end{align*}
Hence, the desired claim is proved. Note that
\begin{align*}
    ||\hat{C}-\tilde{C}||_F^2/p&=(\tr\parentheses{\tilde{C}\tilde{K}^{-1}\tilde{C}\tilde{K}^{-1}}-2\tr\parentheses{\tilde{C}\tilde{K}^{-1/2}\tilde{C}\tilde{K}^{-1/2}}+\tr\parentheses{\tilde{C}^2})/p\\
    &=(\tr\parentheses{\tilde{C}(\tilde{K}^{-1}-I_p)\tilde{C}(\tilde{K}^{-1}-I_p)}+2\tr\parentheses{\tilde{C}^2(\tilde{K}^{-1}-I_p)})/p\\
    &-\parentheses{2\tr\parentheses{\tilde{C}(\tilde{K}^{-1/2}-I_p)\tilde{C}(\tilde{K}^{-1/2}-I_p)}+4\tr\parentheses{\tilde{C}^2(\tilde{K}^{-1/2}-I_p)}}/p.
\end{align*}
To verify that each term in the last equality is $o_p(1)$, note that under the given assumption on $z_i$'s and \hyperlink{A1}{\textbf{(A1)}}, $||\tilde{C}||_2=O_p(1)$ \cite{baik2006,baik2005,jiang2021,bai2012}. Also, by \hyperlink{A3}{\textbf{(A3)}}, 
\begin{align*}
    |\tr\parentheses{\tilde{C}(\tilde{K}^{-1}-I_p)\tilde{C}(\tilde{K}^{-1}-I_p)}/p|&\leq ||\tilde{C}(\tilde{K}^{-1}-I_p)\tilde{C}(\tilde{K}^{-1}-I_p) ||_2\leq ||\tilde{K}^{-1}-I_p||_2^2||\tilde{C}||_2^2=O_p(a_n^2),\\
      |\tr\parentheses{\tilde{C}^2(\tilde{K}^{-1}-I_p)}/p|&\leq ||\tilde{C}^2(\tilde{K}^{-1}-I_p)||_2\leq ||\tilde{K}^{-1}-I_p||_2||\tilde{C}||_2^2=O_p(a_n).
\end{align*}
The similar results hold when $\tilde{K}^{-1}$ is replaced with $\tilde{K}^{-1/2}$ above. Lastly, 
\begin{align*}
    |\tr\parentheses{\tilde{C}(\hat{C}-\tilde{C})}|/p\leq ||\tilde{C}||_2||\hat{C}-\tilde{C}||_2=O_p(a_n)=o_p(1),
\end{align*}
noting that $||\tilde{C}||_2=O_p(1)$ under the given assumptions and $||\hat{C}-C||_2=O_p(a_n)$ by the proof of Lemma \ref{sec4.3.lemma1}.

\end{proof}

\subsection{Separability Rank of the Core Covariance Matrix}\label{secA.6}
We shall examine the separability rank of partial-isotropy rank$-r$ core covariance matrices for $(p_1,p_2,r)$ such that $\max\set{p_1/p_2,p_2/p_1}=r$. Recall that $C\in\calC_{p_1,p_2}^+$ has a separability rank $1$ if and only if  $C$ is separable, i.e., $C=I_p$. Hence, it is natural to ask the separability rank of non-separable $C$. Of course, by Proposition $2.2$ of \cite{puchkin2024}, one can observe that the separability rank of $\Sigma\in\calS_{p}^+$ is the same as that of $c(\Sigma)$. Nevertheless, how the singular values of $\calR(C)$ look, which imply the separability rank, might also be of interest. Also, while we know $\sigma_1(\calR(C))$ as $\sqrt{p_1p_2}$ by Proposition \ref{sec3.3.prop1}, we do not know the rest of the singular values.\\

Hence, we shall explore these singular values for a special case of partial-isotropy rank$-r$ core with $\max\set{p_1/p_2,p_2/p_1}=r$ admitting their closed form. By Lemma \ref{sec4.1.lemma1} and Theorem \ref{sec4.1.thm1}, we have that such a core is written as $(1-\lambda)AA^\top+\lambda I_p$ for $A\in\real^{p\times r}$ as in Theorem \ref{sec4.1.thm1}. Note that the formulas of $A$ when $p_1<p_2$ and $p_2=p_1r$ can be written similarly by symmetry. 

\begin{prop}\label{secA.6.prop1}
    Suppose $\max\set{p_1/p_2,p_2/p_1}=r$ for generic $r\in\bbN$. Let $C=(1-\lambda)AA^\top+\lambda I_p\in\calC_{p_1,p_2}^+$ for $A$ as in Theorem \ref{sec4.1.thm1} and $\lambda\in(0,1)$, and $E=\calR(I_{p_1})\calR(I_{p_1})^\top/p_1$. Then
    \begin{align*}
        \calR(C)\calR(C)^\top=(1-\lambda)^2pI_{p_2^2}+(p_1\lambda^2+2\lambda(1-\lambda)p_1)E.
    \end{align*}
\end{prop}
\begin{proof}
By the linearity of $\calR$, one can see that 
\begin{align*}
    \calR(C)\calR(C)^\top=(1-\lambda)^2 \calR(AA^\top)\calR(AA^\top)^\top+(p_1\lambda^2+2\lambda(1-\lambda)p_1)E.
\end{align*}
Thus, it suffices to verify the formula of $ \calR(AA^\top)\calR(AA^\top)^\top$. Without loss of generality, assume $p_1\geq p_2$ so that $p_1=p_2r$. Then by Theorem \ref{sec4.1.thm1}, we know that $[A_1,\ldots,A_r]=\sqrt{p_2}[O_1,\ldots,O_r]=:\sqrt{p_2}O$ for some $O\in\calO_{p_1}$, with each $A_i$ and $O_i$ having $p_2$ columns. By Lemma \ref{secA.1.lemma1} and the linearity of $\calR$, we have that 
\begin{align*}
    \calR(AA^\top)=p_2\sum_{i=1}^r O_i^\top\otimes O_i^\top.
\end{align*}
Consequently,
\begin{align*}
     \calR(AA^\top)\calR(AA^\top)^\top=p_2^2 \sum_{i=1}^r I_{p_2}\otimes I_{p_2}=rp_2^2I_{p_2^2}=p_1p_2I_{p_2^2}=pI_{p_2^2}.
\end{align*}
\end{proof}

\begin{cor}\label{secA.6.cor1}
Assume the same for $(p_1,p_2,r)$ as in Proposition \ref{secA.6.prop1}, and let $C\in\calC_{p_1,p_2}^+$ be the matrix defined therein. Then
\begin{align*}
    \sigma_1(\calR(C))=\sqrt{p}, \sigma_2(\calR(C))=\cdots=\sigma_{p_1^2\wedge p_2^2}(\calR(C))=(1-\lambda)\sqrt{p}.
\end{align*}
\end{cor}
\begin{proof}
      We identify the singular values through the square roots of the non-zero eigenvalues of $\calR(C)\calR(C)^\top$. Note that $E$ has rank$-1$ because $\calR(I_p)$ has rank$-1$, as $I_p$ is separable. Also, $||\calR(I_p)||_F^2=||I_p||_F^2=p$ as $\calR$ is an isometry under $||\cdot||_F$. Consequently, the only nonzero eigenvalue of $E$ is $p/p_1=p_2$. In view of $\calR(C)\calR(C)^\top$ in Proposition \ref{secA.6.prop1}, we have that 
    \begin{align*}
        \lambda_1(\calR(C)\calR(C)^\top)=p,\quad  \lambda_2(\calR(C)\calR(C)^\top)=\cdots=\lambda_{p_1^2\wedge p_2^2}(\calR(C)\calR(C)^\top)=(1-\lambda)p.
    \end{align*}
    Taking the square root of each eigenvalue above, we conclude the proof. 
\end{proof}

Consequently, a partial-isotropy rank$-r$ $C$ has a separability rank $p_1^2\wedge p_2^2$ if $\max\set{p_1/p_2,p_2/p_1}=r$ with two distinct singular values. 

\subsection{Distributional Invariance and Maximal Invariance of the Sample Core Covariance Matrix}\label{secA.7}

In this subsection, extending the result of Proposition $3$ of \cite{hoff2023a}, we prove that the distribution of the sample core covariance matrix $\hat{C}$, is invariant with respect to $K=k(\Sigma)$ if $\calH=\calL_{p_1,p_2}^+$ and $UDV^\top$ in the model (\ref{sec3.eq1}) of the main text is taken to be the Cholesky factor of $C=c(\Sigma)$. Let $K_L$ and $C_L$ be the Cholesky factors of $K$ and $C$, respectively. Then $K_LC_L\in\calL_{p}^+$ and the uniqueness of the Cholesky factor implies $K_LC_L$ is a unique Cholesky factor of $\Sigma$. We also introduce a counterexample demonstrating that $\hat{C}$ may fail to be maximal invariant under the action of $\calL_{p_1}^+\times \calL_{p_2}^+$ on the data when $n<p$. We first show the distributional invariance of $\hat{C}$ with respect to $K=k(\Sigma)$.
\begin{prop}\label{secA.7.prop1}
   Let $(K,C)\in \calS_{p_1,p_2}^+\times \calC_{p_1,p_2}^+$, and denote the Cholesky factors of $K$ and $C$ by $K_L$ and $C_L$, respectively. Suppose $Z_1,\ldots, Z_n\overset{i.i.d.}{\sim}\calP_0$, where $\bbE[Z_1]=0$ and $V[Z_1]=I_p$. Assume that $Y_1,\ldots,Y_n$ are random samples such that $y_i=K_LC_L z_i$ for $y_i=\text{vec}(Y_i)$ and $z_i=\text{vec}(Z_i)$. Let $S=1/n\sum_{i=1}^ny_iy_i^\top$, and put $\hat{C}=c(S)$. Then parameterizing the distribution of $\hat{C}$ through $(K,C)$, we have that for any $K'\in\calS_{p_1,p_2}^+$,
    \begin{align*}
        \hat{C}|K,C\overset{d}{\equiv}\hat{C}|K',C.
    \end{align*}
\end{prop}

\begin{proof}
Observe that 
 \begin{align*}
     S=K_LC_L \underbrace{1/n\sum_{i=1}^n z_iz_i^\top}_{\tilde{S}} (K_LC_L)^\top. 
 \end{align*}
 Then, by the equivariance of the Kronecker map $k$ under the action of $GL_{p_1,p_2}$,
 \begin{align*}
     k(S)=K_L k\parentheses{C_L\tilde{S}C_L^\top} K_L^\top.
 \end{align*}
 If $\tilde{K}_L$ is a Cholesky factor of $k(C_L\tilde{S}C_L^\top)$, the Cholesky factor of $k(S)$ is $K_L\tilde{K}_L$ as the set $\calL_{p_1,p_2}^+$ is a group under matrix multiplication. Therefore, 
 \begin{align*}
     \hat{C}=c(S)=(K_L\tilde{K}_L)^{-1} S (K_L\tilde{K}_L)^{-\top}=\tilde{K}_{L}^{-1}C_L\tilde{S}C_L^\top \tilde{K}_{L}^{-\top}=c(C_L\tilde{S}C_L^\top).
 \end{align*}
 Because 
 \begin{align*}
     C_L\tilde{S}C_L^\top|K,C\overset{d}{\equiv}  C_L\tilde{S}C_L^\top|K',C
 \end{align*}
 for any $K,K'\in \calS_{p_1,p_2}^+$, the distribution of $c(C_L\tilde{S}C_L^\top)$ is also independent of $K$, proving the claim.
\end{proof}

Now we discuss the maximal invariance of $\hat{C}$. Note that a statistic $U(X)$ is maximal invariant under the action $g$ of a group $G$ if $U(X)=U(Y)$ implies that $X=gY$ for some $g\in G$. Define the action of $g=(L_1,L_2)\in \calL_{p_1}^+\times \calL_{p_2}^+$ on $Y=(Y_1,\ldots,Y_n)\in(\real^{p_1\times p_2})^n$ by 
\begin{align*}
    gY=(L_1Y_1L_2^\top,\ldots,L_1Y_nL_2^\top).
\end{align*}
Note that under usual matrix multiplication, $\calL_{q}^+$ is a group for any $q$ and so $\calL_{p_1}^+\times \calL_{p_2}^+$ is indeed a group under the natural action $(L_1,L_2)\cdot (U_1,U_2)\rightarrow (L_1U_1,L_2U_2)$. If $S(Y):=1/n\sum_{i=1}^n y_iy_i^\top$ for $y_i=\text{vec}(Y_i)$, then the action of $g$ induces the action of $g'=L_2\otimes L_1\in\calL_{p_1,p_2}^+$ on the space of $S$ as  
\begin{align*}
    S(gY)=g'S(Y)=(L_2\otimes L_1)S(Y)(L_2\otimes L_1)^\top.
\end{align*}
Let $C(Y)\equiv \hat{C}$, the core component of $S(Y)$. Since the test statistics proposed in the article are based on $C(Y)$, if $\calH=\calL_{p_1,p_2}^+$ in Definition \ref{sec2.2.def1} and $UDV^\top=C_L$ in the model (\ref{sec3.eq1}), the associated tests become invariant tests \cite{lehmann2022}. According to Proposition $3$ of \cite{hoff2023a}, with the action of $\calL_{p_1,p_2}^+$ on the space of rank $n\wedge p$ positive semidefinite matrices as above, $C(Y)$ is maximal invariant when $n\geq p$ under the action of $\calL_{p_1}^+\times \calL_{p_2}^+$ on $(\real^{p_1\times p_2})^n$. Thus, the tests $\phi_1,\phi_2,$ and $\phi_3$ are invariant in this case. However, when $n<p$, $C(Y)$ may not be strictly positive semi-definite, and thus not maximal invariant.        

To see this, let $X,Y\in (\real^{p_1\times p_2})^n$. We introduce a counterexample where there may not exist an action $g$ of $\calL_{p_1}^+\times \calL_{p_2}^+$ such that $X=gY$ for which $C(X)=C(Y)$, when $n<p$. Hence, in high-dimensional settings, $C(Y)$ may not be maximal invariant under the action of $\calL_{p_1}^+\times \calL_{p_2}^+$. The concrete counterexample below illustrates the failure of the maximal invariance of $C(Y)$, when $n<p$. 

\begin{cex}\label{secA.7.cex1}
    Suppose $Y=(Y_1,Y_2)\in(\real^{2\times 2})^2$, where 
    \begin{align*}
        Y_1=\left[\begin{array}{cc}
        1 & 0 \\
        0 & 1
        \end{array}\right], \quad         Y_2=\left[\begin{array}{cc}
        0 & 1 \\
        -1 & 0
        \end{array}\right]
    \end{align*}
    Take $\calH=\calL_{p_1,p_2}^+$ in Definition \ref{sec2.2.def1}. For some generic $X=(X_1,X_2)\in(\real^{2\times 2})^2$, assume $C(X)=C(Y)$. However, there does not exist an action $g$ of $\calL_{p_1}^+\times\calL_{p_2}^+$ such that $X=gY$.
\end{cex}
To prove Counterexample \ref{secA.7.cex1}, we need the following ancillary result.
 \begin{lemma}\label{secA.7.lemma1}
        Suppose that $n> p_1/p_2+p_2/p_1$. Define the action of $\calL_{p_1}^+\times\calL_{p_2}^+$ on $(\real^{p_1\times p_2})^n$ by $gY=\parentheses{L_1Y_1L_2^\top,\ldots,L_1Y_nL_2^\top}$ for $Y=(Y_1,\ldots,Y_n)\in (\real^{p_1\times p_2})^n$ and $g=(L_1,L_2)\in\calL_{p_1}^+\times\calL_{p_2}^+$. Then the maximal invariance of $C(Y)$ under the action of $\calL_{p_1}^+\times\calL_{p_2}^+$ on $(\real^{p_1\times p_2})^n$ holds if and only if the action of $\calL_{p_1}^+\times\calL_{p_2}^+$ is transitive over the set $\mathcal{F}=\set{X\in(\real^{p_1\times p_2})^n:S(X)=C}$ for any core $C$, possibly positive semidefinite.
    \end{lemma}
    \begin{proof}[\textbf{\upshape Proof.}]
        We begin with the proof of if part. Suppose $X,Y\in(\real^{p_1\times p_2})^n$ satisfy $C(X)=C(Y)=C$ for some  core $C$, possibly positive semidefinite. For $Z\in (\real^{p_1\times p_2})^n$, let $K(Z)\equiv k(S(Z))$. Denote the Cholesky factors of $K(X)$ and $K(Y)$ by $L_{2,X}\otimes L_{1,X}$ and $L_{2,Y}\otimes L_{1,Y}$, respectively. If $g_X=(L_{1,X},L_{2,X})$ and $g_Y=(L_{1,Y},L_{2,Y})$, $S(g_X^{-1}X)=S(g_Y^{-1}Y)=C$. By the assumption of if part, there exists $g\in\calL_{p_1}^+\times\calL_{p_2}^+$ such that $g_X^{-1}X=gg_Y^{-1}Y$. Thus, $X=g_Xgg_Y^{-1}Y$. Since $\calL_{p_1}^+\times \calL_{p_2}^+$ is a group, $g_Xgg_Y^{-1}\in\calL_{p_1}^+\times \calL_{p_2}^+$, which concludes the if part.\\

        To prove the only if part, suppose $S(X)=S(Y)=C$. By the assumption of only if part, there exists $g\in\calL_{p_1}^+\times \calL_{p_2}^+$ such that $X=gY$, which implies that the action of $\calL_{p_1}^+\times \calL_{p_2}^+$ is transitive over $\mathcal{F}$.  
    \end{proof}

Note that if $n\geq p$, Proposition $3$ of \cite{hoff2023a} implies that the statistic $\hat{C}$ is maximal invariant. Therefore, by Lemma \ref{secA.7.lemma1}, the action of $\calL_{p_1,p_2}^+$ is transitive over $\mathcal{F}$ for a given $C\in\calC_{p_1,p_2}^+$. However, as shown in the proof of Counterexample \ref{secA.7.cex1}, the transitivity of the action of $(\real^{p_1\times p_2})^n$ may not hold if $n<p$.   

    \begin{proof}[\textbf{\upshape Proof of Counterexample \ref{secA.7.cex1}.}]
We first compute the sample core of $Y=(Y_1,Y_2)$. Then
    \begin{align*}
        S(Y)=\left[\begin{array}{cccc}
        1/2 & 0 & 0 & 1/2\\
        0 & 1/2 & -1/2 & 0\\
        0 & -1/2 & 1/2 & 0 \\
        1/2 & 0 & 0 & 1/2
        \end{array}\right].
    \end{align*}
For $Z=(Z_1,\ldots,Z_n)\in(\real^{p_1\times p_2})^n$, denote the separable component of $S(Z)$ by $K(Z)$, namely the Kronecker MLE based on $Z$. To find $K(Y)$, note that $\hat{\Sigma}_2\otimes\hat{\Sigma}_1$ is the Kronecker MLE based on $(Z_1,\ldots,Z_n)\in(\real^{p_1\times p_2})^n$ if and only if 
    \begin{align}\label{secA.7.eq1}
\begin{split}
    \hat{\Sigma}_1&=\sum_{i=1}^n Z_i\hat{\Sigma}_2^{-1}Z_i^\top/np_2,\\
\hat{\Sigma}_2&=\sum_{i=1}^n Z_i^\top \hat{\Sigma}_1^{-1}Z_i/np_1.
\end{split}
\end{align}
(see (3) and Definition $1$ of \cite{hoff2023a}). Suppose $K(Y)=\Sigma_2\otimes \Sigma_1$. By (\ref{secA.7.eq1}), 
    \begin{align*}
        \Sigma_1&=\parentheses{Y_1\Sigma_2^{-1}Y_1^\top+Y_2\Sigma_2^{-1}Y_2^\top}/4,\\
        \Sigma_2&=\parentheses{Y_1^\top \Sigma_1^{-1}Y_1+Y_2^\top \Sigma_1^{-1}Y_2}/4.
    \end{align*}
    Writing $\Sigma_2^{-1}=(\omega_{ij}^2)$, the above implies that 
    \begin{align*}
        \Sigma_1&=(\omega_{11}^2+\omega_{22}^2)/4I_2,\\
        \Sigma_2&=2/(\omega_{11}^2+\omega_{22}^2)I_2.
    \end{align*}
    Hence, $K(Y)=1/2 I_4$. Because the Cholesky factor of $K(Y)$ is clearly $1/\sqrt{2}I_4$, 
    \begin{align*}
        C(Y)=(1/\sqrt{2}I_4)^{-1}S_Y(1/\sqrt{2}I_4)^{-\top}=\left[\begin{array}{cccc}
        1 & 0 & 0 & 1\\
        0 & 1 & -1 & 0\\
        0 & -1 & 1 & 0 \\
        1 & 0 & 0 & 1
        \end{array}\right]=:C.
    \end{align*}
    In view of Lemma \ref{secA.7.lemma1}, we characterize the set of $X=(X_1,X_2)\in (\real^{p_1\times p_2})^2$ such that $S(X)=C$. Write $X_1=(x_{1,ij})$ and $X_2=(x_{2,ij})$, and suppose that $S(X)=C$. Then 
    \begin{align*}
        2S_X&=\left[\begin{array}{cccc}
        x_{1,11}^2+x_{2,11}^2 & x_{1,11}x_{1,21}+x_{2,11}x_{2,21} & x_{1,11}x_{1,12}+x_{2,11}x_{2,12} &x_{1,11}x_{1,22}+x_{2,11}x_{2,22} \\
        * &      x_{1,21}^2+x_{2,21}^2& x_{1,21}x_{1,12}+x_{2,21}x_{2,12}& x_{1,21}x_{1,22}+x_{2,21}x_{2,22} \\
        * & * & x_{1,12}^2+x_{2,12}^2 & x_{1,12}x_{1,22}+x_{2,12}x_{2,22} \\
        * & * &* &\ x_{1,22}^2+x_{2,22}^2\
        \end{array}\right]\\
        &=2\left[\begin{array}{cccc}
        1 & 0 & 0 & 1\\
        0 & 1 & -1 & 0\\
        0 & -1 & 1 & 0 \\
        1 & 0 & 0 & 1
        \end{array}\right]=2C.
    \end{align*}
    Comparing the diagonal entries, $X_1$ and $X_2$ can be initially parameterized by $(\theta,\phi,\delta,\tau)$ as 
    \begin{align*}
        X_1=\sqrt{2}\left[\begin{array}{cc}
     \cos\theta    & \cos\delta \\
       \cos\phi  & \cos\tau
        \end{array}\right], \quad X_2=\sqrt{2}\left[\begin{array}{cc}
       \sin\theta  & \sin\delta \\
        \sin\phi &\sin\tau
        \end{array}\right].
    \end{align*}
    In fact, the parameterization above may be redundant. For example, if one compares $(1,2)-$entries of $2S(X)$ and $2C$ above, it holds that $\cos\theta\cos\phi+\sin\theta\sin\phi=\cos\parentheses{\theta-\phi}=0$. Thus, $\theta-\phi=(2k-1)\pi/2$ for some $k\in\mathbb{Z}$. Comparing other off-diagonal entries similarly, one can deduce that 
    \begin{align*}
        X_1=\sqrt{2}\left[\begin{array}{cc}
      \cos\theta  & \pm \sin\theta\\
      \mp \sin\theta & \cos\theta
        \end{array}\right], \quad X_2=\sqrt{2}\left[\begin{array}{cc}
     \sin\theta  & \mp \cos\theta \\
       \pm \cos\theta &\sin\theta
        \end{array}\right].
    \end{align*}
    Thus, the set $\mathcal{F}$ in Lemma \ref{secA.7.lemma1} with $C$ above, is exactly 
    \begin{align*}
        \mathcal{F}=\set{\parentheses{\sqrt{2}\left[\begin{array}{cc}
      \cos\theta  & \pm \sin\theta\\
      \mp \sin\theta & \cos\theta
        \end{array}\right],\sqrt{2}\left[\begin{array}{cc}
     \sin\theta  & \mp \cos\theta \\
       \pm \cos\theta &\sin\theta
        \end{array}\right]}:\theta\in\real}.
    \end{align*}
    Now we claim that the action of $\calL_{p_1}^+\times \calL_{p_2}^+$ is not transitive over $\mathcal{F}$, and thus the maximal invariance of the sample covariance matrix does not hold by Lemma \ref{secA.7.lemma1}. Suppose otherwise. Then there exists $g=(L_1,L_2)\in\calL_{p_1}^+\times \calL_{p_2}^+$ such that 
    \begin{align*}
        gY=\parentheses{L_1Y_1L_2^\top,L_1Y_2L_2^\top}=\parentheses{\sqrt{2}\left[\begin{array}{cc}
      \cos\theta  & \pm \sin\theta\\
      \mp \sin\theta & \cos\theta
        \end{array}\right],\sqrt{2}\left[\begin{array}{cc}
     \sin\theta  & \mp \cos\theta \\
       \pm \cos\theta &\sin\theta
        \end{array}\right]}
    \end{align*}
    for any $\theta$. Writing $L_1=(\ell_{1,ij})$ and $L_2=(\ell_{2,ij})$, observe that 
    \begin{align*}
        L_1Y_2L_2^\top=\left[\begin{array}{cc}
           0  & -\ell_{1,11}\ell_{2,22}  \\
         \ell_{1,22}\ell_{2,11}    & \ell_{2,21}\ell_{1,22}-\ell_{2,22}\ell_{1,21}
        \end{array}\right].
    \end{align*}
    Since $\sin\theta$ is not necessarily $0$, this proves the desired claim.
\end{proof}

\section{Miscellaneous Figures and Tables}\label{sec.B:misc}

We provide figures and tables that support the results and discussions in the main text. Regarding Lemma \ref{sec4.3.lemma1} and Corollary \ref{sec4.3.cor1}, we numerically examine two things. First, we examine whether the Marchenko-Pastur (MP) law \cite{marchenko1967} holds for the empirical spectral distribution of a sample core covariance matrix under the null of separability, i.e., $C=I_p$. \cite{marchenko1967} showed that if $p/n\rightarrow\gamma\in(0,\infty)$ as $n\rightarrow\infty$, and $X=(x_{ij})\in \real^{n\times p}$, where $x_{ij}\overset{i.i.d.}{\sim}N(0,\sigma^2)$, then the empirical distribution of the eigenvalues of the sample covariance matrix $S=X^\top X/n$ converges weakly to the distribution $H$, whose density, dominated by Lebesgue measure, is given by: 
\begin{align*}
f_{\text{MP}}(x)=\frac{1}{2\pi\sigma^2}\frac{\sqrt{(\lambda_+-x)(x-\lambda_-)}}{\gamma x}I(x\in[\lambda_-,\lambda_+])+(1-1/\gamma)\delta_0(x)I(\gamma>1).
\end{align*}
Here $\lambda_{\pm}=\sigma^2(1\pm \sqrt{\gamma})^2$. This result was generalized to the case where $x_{ij}$'s are i.i.d. with zero mean and finite second moment \cite{silverstein1995}. With $C=I_p$, we verify whether MP law holds for the empirical spectral distribution of the sample core covariance matrix $\hat{C}=c(S)$. We consider $(p_1,p_2,n)\in\set{(40,40,6400),(40,80,6400),(40,80,1600),(80,80,1600)}$, representing the cases when $p/n\in \{1/4, 1/2,2,4\}$, respectively. Figure \ref{secB.figure1} shows the empirical spectral distribution of $\hat{C}$, and the blue line denotes the density of MP law for selected values of $(p_1,p_2,n)$. It suggests that the empirical spectral distribution may converge to the MP law. Note that for $p/n>1$, there are $p-n$ zero sample core eigenvalues, which are omitted in Figure \ref{secB.figure1}.\\ 

\begin{figure}[!ht]
    \centering
    \includegraphics{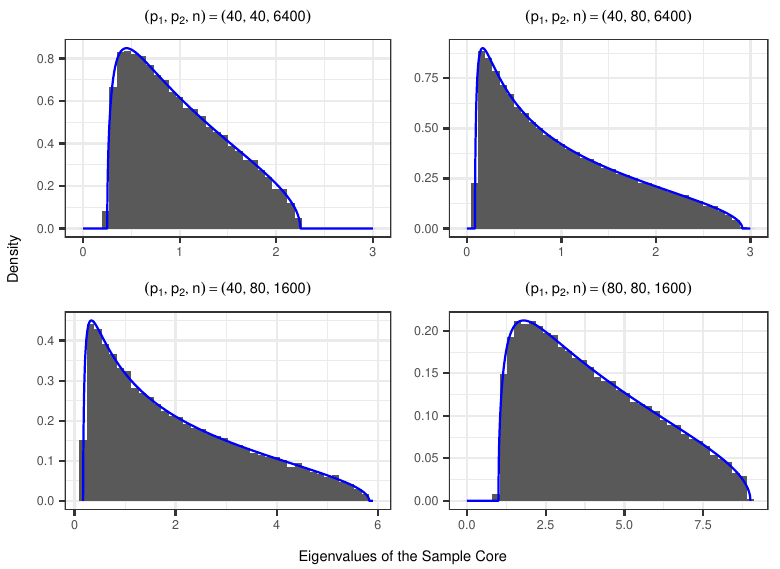}
    \caption{The empirical spectral distribution of the sample core covariance matrix $\hat{C}$ and the density of MP law (blue line) when $C=I_p$ with $(p_1,p_2,n)\in\set{(40,40,6400),(40,80,6400),(40,80,1600),(80,80,1600)}$.}
    \label{secB.figure1}
\end{figure}

Next, we verify the sample eigenvalue bias derived by \cite{baik2006} when $C$ exhibits a rank$-r$ partial-isotropy structure for $r=1,2$, i.e., $C=(1-\lambda)AA^\top+\lambda I_p$ for $\lambda\in(0,1)$ and $A\in\real^{p\times r}$ as in Theorem \ref{sec4.1.thm1}. Recall from Section \ref{sec4.1} that assuming $p_1\geq p_2$, $r=1$ is possible only when $p_1=p_2$ and $r=2$ only when $p_1=p_2$ or $(p_1,p_2)=((k+1)m,km)$ for some $k,m\in\bbN$. We present a version of the results of \cite{baik2006}. Suppose the covariance matrix $\Sigma$ has a spectrum as 
\begin{align*}
    \text{spec}(\Sigma)=\sigma^2(\underbrace{a_1,\ldots,a_1}_{n_1},\ldots,\underbrace{a_k,\ldots,a_k}_{n_k},\underbrace{1,\ldots,1}_{p-M}),
\end{align*}
where $n_1,\ldots,n_k\geq 1$ are fixed numbers in $n$, $M=\sum_{i=1}^k n_i$, $a_1\geq \cdots \geq a_k>1$ are fixed spike values, with $\sigma^2$ known. Let $\tilde{S}=\Sigma^{1/2}S\Sigma^{1/2}$ for the symmetric square root $\Sigma^{1/2}$ of $\Sigma$. Denote the eigenvalues of $\tilde{S}$ by $\ell_1\geq \cdots \geq \ell_M$ corresponding to the spike eigenvalues of $\Sigma$, and let $\mathcal{J}_i=\set{\sum_{j=1}^{i-1}n_i+1,\ldots,\sum_{j=1}^in_j}$ for $j=1,\ldots,k$. Under the regime where $p/n\rightarrow\gamma\in(0,\infty)$ as $n\rightarrow\infty$, \cite{baik2006} derived the sample eigenvalue bias, as follows (see Theorem 1.1--1.2 of \cite{baik2006}): for $j\in \mathcal{J}_i$,  
\begin{align*}
    \ell_j\overset{a.s.}{\rightarrow}\begin{cases}
     \sigma^2\parentheses{a_i+\frac{\gamma a_i}{a_i-\sigma^2}},\quad & \text{if } a_i>1+\sqrt{\gamma},\\
     \sigma^2(1+\sqrt{\gamma})^2,\quad & \text{o.t.}
    \end{cases}.
\end{align*}

We examine whether the above result holds when $\Sigma$ is a core covariance matrix with a rank$-r$ partial isotropic structure, focusing on $r=1,2$. To align with the assumption of $\Sigma$, we set $\lambda=1/(1+rc/p_1p_2)$ for a constant $c>0$. By Corollary \ref{sec4.1.cor1}, the ratio of each spike eigenvalue to the non-spike eigenvalue is $1+c$ for the chosen $\lambda$. Although the ratio is not constant in $p$ with the same choice of $\lambda$ if $p_1=p_2$ and $r=2$, we still consider such a $\lambda$ as the ratio is bounded in $p$.  We take $c\in\set{0.2,0.8,1.2,1.8,2.4}$ and $(p_1,p_2,n)$ such that $p/n=1/4$ . Given $c$ and $(p_1,p_2,n)$, we randomly generate a core covariance matrix $A$ according to Theorem \ref{sec4.1.thm1}, and generate a sample covariance matrix $\check{S}$ of the data generated from $N_{p_1\times p_2}(0,C)$. We then compute the sample core eigenvalues corresponding to the spike eigenvalues. Repeating this procedure $1000$ times, we compute the mean of the sample core eigenvalues across $1000$ iterations and compare it with the almost sure limits of the sample eigenvalues by \cite{baik2006}. Tables \ref{secB.table1}--\ref{secB.table3} report the mean of sample core eigenvalues across $1000$ iterations and the almost sure limits of the sample eigenvalue by \cite{baik2006}. The results suggest that the sample eigenvalue bias by \cite{baik2006} may also hold for the sample core eigenvalue.

\begin{table}[!ht]
    \centering
    \begin{tabular}{cccccc}
     \hline    & \multicolumn{5}{c}{$c$} \\ \cmidrule{2-6}  
         & $0.2$ & $0.8$ & $1.2$ & $1.8$ & $2.4$ \\ \hline
         $\tau_1$ & 2.249 & 2.358& 2.650 &3.175 & 3.732 \\
     $\hat{\tau}_1$ & 2.225 & 2.352 & 2.644 & 3.172 & 3.730  \\ \hline 
    \end{tabular}
    \caption{The limiting value of the largest sample eigenvalue by \cite{baik2006} ($\tau_1$), and the mean of the largest sample eigenvalue across $1000$ iterations ($\hat{\tau}_1$), where $C$ is randomly generated as a rank$-1$ partial isotropic structure with $(p_1,p_2,r)=(p_1,p_1,1)$ and $\lambda=1/(1+c/p_1^2)$ in the second item of Corollary \ref{sec4.1.cor1}. Here $(p_1,p_2,n)=(20,20,1600)$.}
    \label{secB.table1}
\end{table}

\begin{table}[!ht]
    \centering
    \begin{tabular}{cccccc}
    \hline & \multicolumn{5}{c}{$c$} \\ \cmidrule{2-6}
           & $0.2$ & $0.8$ & $1.2$ & $1.8$ & $2.4$ \\ \hline
      $\tau_1$     & 2.248 & 2.441 & 2.954 & 3.392 &  4.104 \\ 
      $\hat{\tau}_1$ & 2.225 & 2.442 & 2.962 & 3.392 & 4.097\\
      $\tau_2$ & 2.248 & 2.283 & 2.372 & 2.934 & 3.322 \\
      $\hat{\tau}_2$ & 2.193 &2.269  &2.364 & 2.915 & 3.304  \\ \hline 
    \end{tabular}
    \caption{The limiting values of the two largest sample eigenvalues by \cite{baik2006} ($\tau_1\geq \tau_2$), and the means of the two largest sample eigenvalue across $1000$ iterations ($\hat{\tau}_1\geq \hat{\tau}_2$), where $C$ is randomly generated as a rank$-2$ partial isotropic structure with $(p_1,p_2,r)=(p_1,p_1,2)$ and $\lambda=1/(1+2c/p_1^2)$ in the first item of Corollary \ref{sec4.1.cor1}. Here $(p_1,p_2,n)=(20,20,1600)$.}
    \label{secB.table2}
\end{table}

\begin{table}[!ht]
    \centering
    \begin{tabular}{cccccc}
    \hline & \multicolumn{5}{c}{$c$} \\ \cmidrule{2-6}
           & $0.2$ & $0.8$ & $1.2$ & $1.8$ & $2.4$ \\ \hline
      $\tau$     &  2.249 & 2.356 &2.648 & 3.170 & 3.724 \\ 
      $\hat{\tau}_1$ & 2.232 & 2.388  & 2.696  & 3.235 & 3.809 \\
      $\hat{\tau}_2$ & 2.207 &2.317  &2.594 & 3.099 & 3.636 \\ \hline 
    \end{tabular}
    \caption{The limiting value of the two largest sample eigenvalues by \cite{baik2006} ($\tau$), and the means of the two largest sample eigenvalue across $1000$ iterations ($\hat{\tau}_1\geq \hat{\tau}_2$), where $C$ is randomly generated as a rank$-2$ partial isotropic structure with $(p_1,p_2,r)=((k+1)m,km,2)$ for some $k,m\in\bbN$ and $\lambda=1/(1+2c/p_1p_2)$ in the second item of Corollary \ref{sec4.1.cor1}. Note that the limiting values for the two largest sample eigenvalues are the same in this case. Here $(p_1,p_2,n)=(30,20,2400)$.}
    \label{secB.table3}
\end{table}

We conclude this section with a discussion on the result of Proposition \ref{sec4.4.prop1}. By Proposition \ref{sec4.4.prop1}, $T_3(Y)$ converges to $\gamma_1\gamma_2$ in probability under the null hypothesis of separability, where $\gamma_i$ is a limit of $p_i/\sqrt{n}$ in $n$ as in \hyperlink{A1}{\textbf{(A1)}}. In Proposition \ref{sec4.4.prop1}, we assumed that $k(\Sigma)=I_p$ for the population covariance matrix $\Sigma$ by Kronecker-invariance. Through Monte Carlo simulations, we verify the results of Proposition \ref{sec4.4.prop1}. In addition to this, we verify that the values of $\bar{T}_3(Y):=||S||_F^2/\sigma_1(\calR(S))^2-1$ will depend on the value of $\Sigma=\Sigma_2\otimes \Sigma_1\in\calS_{p_1,p_2}^+$ in general, even under the null hypothesis of separability. Consequently, this provides another motivation for using $T_3(Y)$ rather than $\bar{T}_3(Y)$ to test the separability.\\ 

To this end, we consider two different values of $\Sigma_2\otimes \Sigma_1$  as follows. Let $\Gamma_i\Lambda_i\Gamma_i^\top$ be the eigendecomposition of $\Sigma_i$ for $\Gamma_i\in\calO_p$ and a diagonal $\Lambda_i$ with positive diagonal entries. We randomly generate $\Gamma_i$. The diagonal entries of $\Lambda_i$ are generated as follows.
\begin{itemize}
    \item[]\textbf{(B1)} $\text{diag}(\Lambda_1)=(\underbrace{4,\ldots,4}_{p_1/2},\underbrace{1,\ldots,1}_{p_1/2})$, $\text{diag}(\Lambda_2)=(\underbrace{3,\ldots,3}_{p_2/2},\underbrace{2,\ldots,2}_{p_2/2})$.
    \item[]\textbf{(B2)} $\text{diag}(\Lambda_1)=(\underbrace{8,\ldots,8}_{p_1/2},\underbrace{3,\ldots,3}_{p_1/2})$, $\text{diag}(\Lambda_2)=(\underbrace{5,\ldots,5}_{p_2/2},\underbrace{1,\ldots,1}_{p_2/2})$.
\end{itemize}

Take $(n,p_1,p_2)=(1600,40,20)$. For each model \textbf{(B1)} and \textbf{(B2)}, we simulate $T_3(Y)$ and $\bar{T}_3(Y)$ based on $Y_1,\ldots,Y_n\overset{i.i.d.}{\sim}N_{p_1\times p_2}(0,\Sigma_2\otimes\Sigma_1)$, and repeat this procedure for $1000$ times. Table \ref{secB.table4} provides the mean of simulated $T_3(Y)$ and $\bar{T}_3(Y)$ across $1000$ simulations for each model \textbf{(B1)} and \textbf{(B2)}. We can clearly observe that the means of simulated $T_3(Y)$ do not vary by the models, whereas $\bar{T}_3(Y)$ do. 

\begin{table}[!ht]
    \centering
    \begin{tabular}{ccc}
    \hline 
      Model   & $T_3(Y)$ & $\bar{T}_3(Y)$ \\ \hline 
    \textbf{B1} & 0.499 & 0.352\\ 
      \textbf{B2} & 0.499 & 0.285 \\ \hline
    \end{tabular}
    \caption{Mean of $T_3(Y)$ and $\bar{T}_3(Y)$ across $1000$ Monte Carlo simulations for $Y_1,\ldots,Y_n\overset{i.i.d.}{\sim}N_{p_1\times p_2}(0,\Sigma_2\otimes\Sigma_1)$ with $(n,p_1,p_2)=(1600,40,20)$, where $\Sigma_2\otimes\Sigma_1$ is generated according to the model either \textbf{(B1)} or \textbf{(B2)}.}
    \label{secB.table4}
\end{table}

\section{Additional Simulation Studies}\label{sec.C:add}

\subsection{Simulated Null Distributions for Non-Gaussian Populations}\label{secC.1}

We provide the Monte Carlo-approximated (simulated) null distributions of $T_1(Y),T_2(Y),$ and $T_3(Y)$ for non-Gaussian populations, particularly those satisfying the sub-exponential tail assumption. For $T_1(Y)$, we consider its two versions, $T_1^1(Y)$ and $T_1^2(Y)$, as defined in Section \ref{sec4.3:asymp.null}. Note that by Kronecker-invariance, we assume that the population covariance matrix $\Sigma$ is $I_p$. We generate random matrices $Y_1,\ldots,Y_n\in\real^{p_1\times p_2}$ whose entries are i.i.d. with a distribution $\calP_0$ whose mean and variance are $0$ and $1$, respectively. Suppose $Z\sim \calP_0$. Then $Z\overset{d}{\equiv}(W-\bbE[W])/\sqrt{V[W]}$ for $W\sim \text{Gamma}(\alpha,\beta)$, where $\bbE[W]=\alpha/\beta$ and $V[W]=\alpha/\beta^2$. We consider $(\alpha,\beta)\in\set{(1,1),(2,1),(1,2)}$.\\

Figure \ref{secC.1.figure1} provides the simulated null distributions of $T_1^1(Y)$ and $T_1^2(Y)$ based on $1000$ Monte Carlo simulations, and the asymptotic null distribution, $TW_1$, with $(n,p_1,p_2)=(1600,80,80)$. From Figure \ref{secC.1.figure1}, we see that the center of the simulated null distribution based on $T_1^2(Y)$ is slightly shifted to the left compared to that of $TW_1$, as observed in Figures \ref{sec5.1.figure1}--\ref{sec5.1.figure2}. Hence, the asymptotic test based on $T_1^2(Y)$ may be conservative. Again, this may be due to the estimation error of $\hat{K}$. On the other hand, whether that based on $T_1^1(Y)$ is conservative may depend on the heavy-tailedness of the data-generating distribution $\calP_0$. Note that the kurtosis of $\text{Gamma}(\alpha,\beta)$ is $6/\alpha$. Hence, the larger $\alpha$ is, the more the center of the simulated null distribution is shifted toward to the left. Indeed, from the first and third rows of Figure \ref{secC.1.figure1}, one can expect that the asymptotic test based on $T_1^1(Y)$ is unlikely to be conservative, whereas it is conservative based on the second row of the figure.\\

\begin{figure}[!ht]
    \centering
    \includegraphics{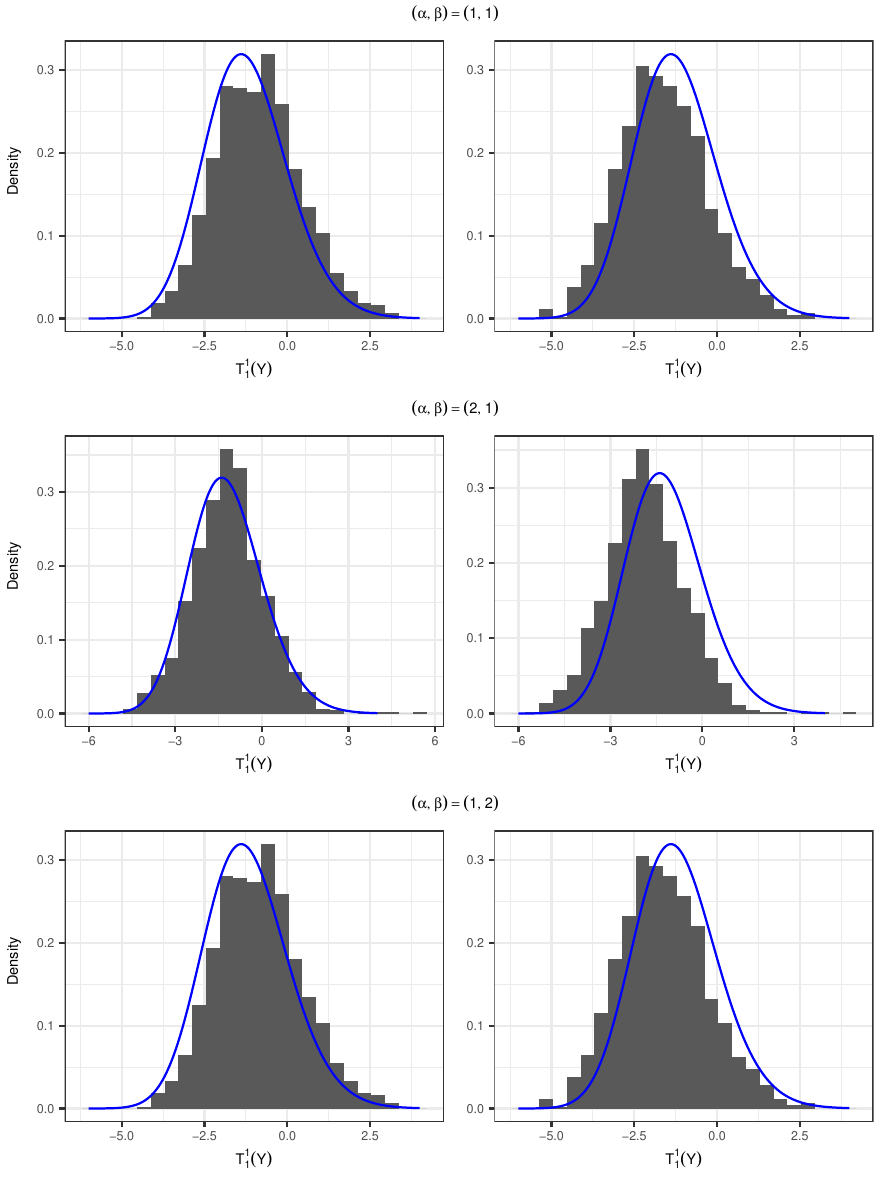}
    \caption{Monte Carlo-approximated distributions of $T_1^1(Y)$ (left panel) and $T_1^2(Y)$ (right panel) based on $1000$ simulations, and the asymptotic null distribution $TW_1$ (blue line) with $(n,p_1,p_2)=(1600,80,80)$. Here the entries of the random matrices $Y_1,\ldots,Y_n$ are i.i.d. with $(Z-\bbE[Z])/\sqrt{V[Z]}$, where $Z\sim \text{Gamma}(\alpha,\beta)$. The first, second, and third rows denote the cases of $(\alpha,\beta)=(1,1),(2,1),(1,2)$, respectively.}
    \label{secC.1.figure1}
\end{figure}

Figure \ref{secC.1.figure2} provides the simulated null distributions of suitably transformed $T_2(Y)$ and $T_3(Y)$ based on $1000$ Monte Carlo simulations, and the asymptotic null distributions by \cite{wang2021} and \cite{li2016}, respectively. We refer the transformations of $T_2(Y)$ to Theorem $1$--$2$ of \cite{wang2021}. The transformation of $T_3(Y)$ is given as $nT_3(Y)-p-1$. Note that compared to the results by \cite{ledoit2002}, \cite{li2016} studied the asymptotic null distribution of the test statistic of John's sphericity test in more general regime, where the fourth moment is finite. From Figure \ref{secC.1.figure2}, we observe that the centers of the simulated null distributions of these statistics are shifted to the left compared to the reference asymptotic null distributions, as observed in Figure \ref{sec5.1.figure3}.

\begin{figure}[!ht]
    \centering
    \includegraphics{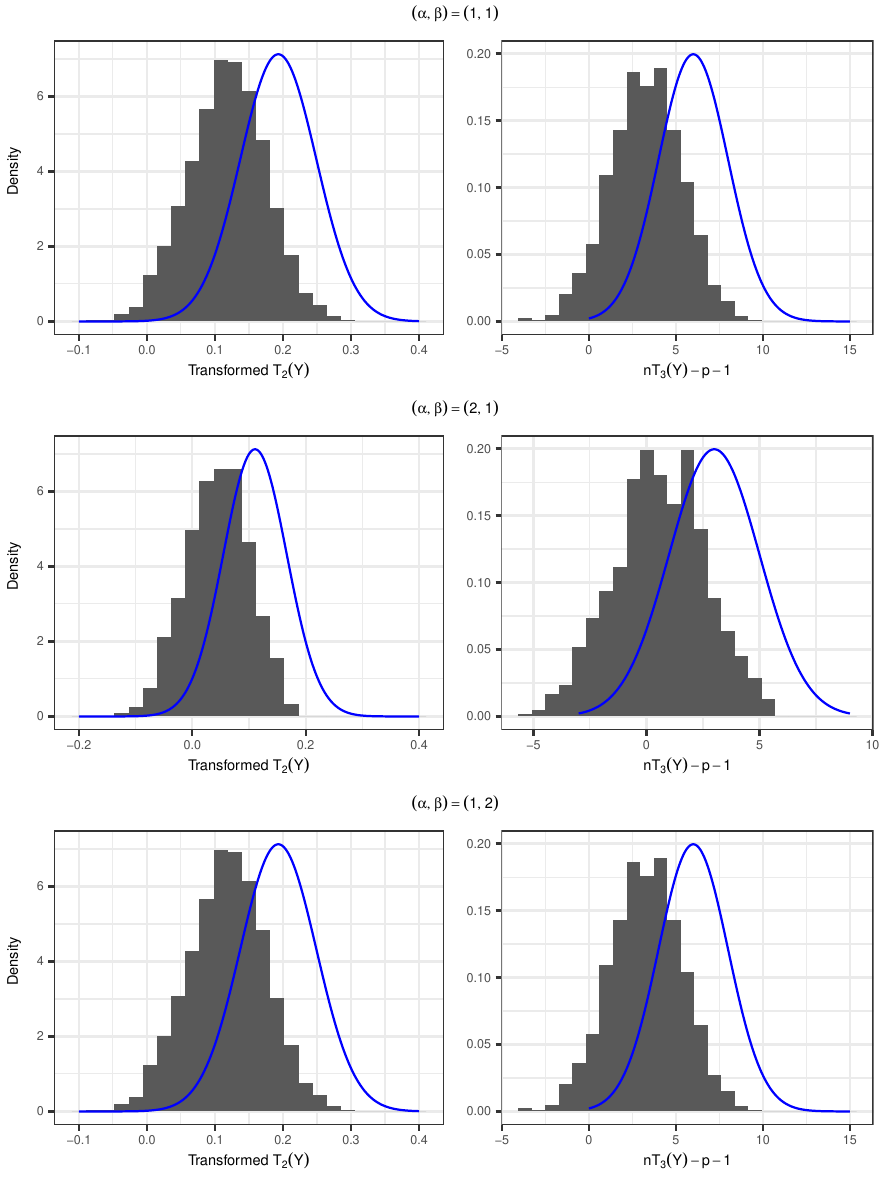}
    \caption{Monte Carlo-approximated distributions of $T_2(Y)$ (left panel) and $T_3(Y)$ (right panel) based on $1000$ simulations and the asymptotic null distributions in blue line by \cite{wang2021} (left panel) and \cite{li2016} (right panel) with $(n,p_1,p_2)=(1600,20,20)$. Here the entries of the random matrices $Y_1,\ldots,Y_n$ are i.i.d. with $(Z-\bbE[Z])/\sqrt{V[Z]}$, where $Z\sim \text{Gamma}(\alpha,\beta)$. The first, second, and third rows denote the cases of $(\alpha,\beta)=(1,1),(2,1),(1,2)$, respectively.}
    \label{secC.1.figure2}
\end{figure}

\subsection{Simulated Distributions of $T_1(Y)$ under Alternative Regimes}\label{secC.2}

We verify the second claim of Theorem \ref{sec4.1.thm1} on the asymptotic distribution of transformed $T_1(Y)$ under the local alternative regimes. We provide the empirical distributions of the versions of $T_1(Y)$, $T_1^1(Y)$ and $T_1^2(Y)$ defined in Section \ref{sec4.3:asymp.null}, under alternative regimes based on $1000$ Monte Carlo simulations. For each simulation, we generate the data $Y_1,\ldots,Y_n\overset{i.i.d.}{\sim}N_{p_1\times p_2}(0,(1-\lambda)AA^\top+\lambda I_p)$ with $(n,p_1,p_2)=(10000,60,50)$. Here $\lambda\in(0,1)$ and $A\in\real^{p\times 2}$ generated according to the second item of Theorem \ref{sec4.1.thm1}. \\

We consider two values of $\lambda$ as $1/(1+2\cdot 0.2/p)$ and $1/(1+2\cdot 4/p)$. By Corollary \ref{sec4.1.cor1}, the former value represents the local alternative regimes as $\lambda_1(C)/\lambda=1+0.2<1+\sqrt{0.3}$ for $C=(1-\lambda)AA^\top+\lambda$ and $\lambda$ is close enough to $1$, whereas $\lambda_1(C)/\lambda=1+4>1+\sqrt{0.3}$ for the latter value of $\lambda$. For the former value, the empirical distributions of $T_1^1(Y)$ and $T_1^2(Y)$ are expected to approximate $TW_1$ well. Recall that since $\lambda_1(C)/\lambda=(1-\lambda)\sigma_1^2(A)/\lambda+1$, the result of Theorem \ref{sec4.3.thm1} implies that the asymptotic distribution of transformed $T_1(Y)$ under local alternative is the same as the asymptotic null distribution if the signal-to-noise ratio (SNR) in the core is low enough. Hence, to illustrate the contrast, we also provide the empirical distribution under the alternative regime with a large SNR in the population core. From Figures \ref{secC.2.figure1}--\ref{secC.2.figure2}, we observe that both empirical distributions of $T_1^1(Y)$ and $T_1^2(Y)$ approximate $TW_1$ with a low SNR in the population core, whereas these distributions significantly deviate from $TW_1$ with a large SNR.  

\begin{figure}[!ht]
    \centering
    \includegraphics{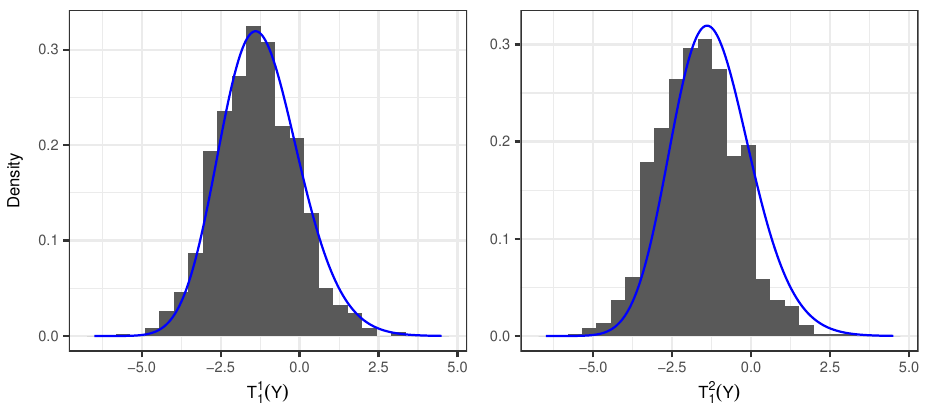}
    \caption{Monte Carlo-approximated distributions of $T_1^1(Y)$ (left panel) and $T_1^2(Y)$ (right panel) based on $1000$ simulations under $N_{p_1\times p_2}(0,(1-\lambda)AA^\top+\lambda I_p)$ and the asymptotic null distribution $TW_1$ (blue line) with $(n,p_1,p_2)=(10000,60,50)$. Here $A\in\real^{p\times 2}$ generated according to the second item of Theorem \ref{sec4.1.thm1} and $\lambda=1/(1+2\cdot 0.2/p)$.}
    \label{secC.2.figure1}
\end{figure}

\begin{figure}[!ht]
    \centering
    \includegraphics{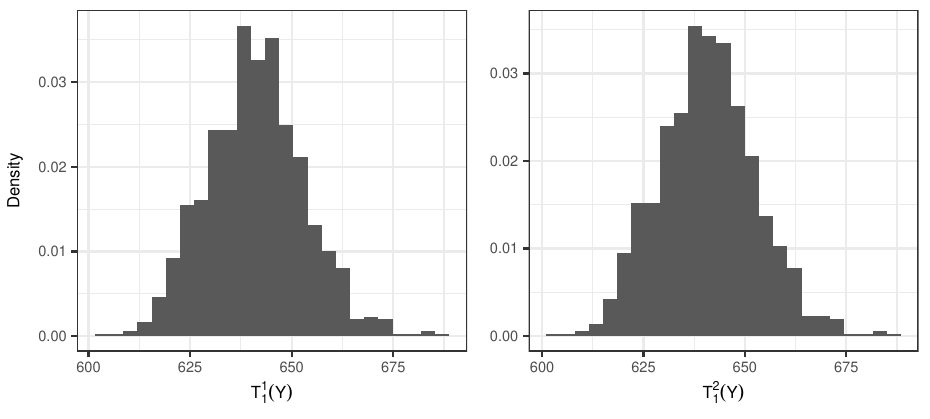}
    \caption{Monte Carlo-approximated distributions of $T_1^1(Y)$ (left panel) and $T_1^2(Y)$ (right panel) based on $1000$ simulations under $N_{p_1\times p_2}(0,(1-\lambda)AA^\top+\lambda I_p)$ with $(n,p_1,p_2)=(10000,60,50)$. Here $A\in\real^{p\times 2}$ generated according to the second item of Theorem \ref{sec4.1.thm1} and $\lambda=1/(1+2\cdot 4/p)$.}
    \label{secC.2.figure2}
\end{figure}

\subsection{Empirical Power of Non-Gaussian Populations}\label{secC.3}
We evaluate the empirical power of $\phi_1,\phi_2,\phi_3$, \texttt{LRT}, and \texttt{PTCLT} under non-Gaussian populations. Namely, let $\calP_0$ be a distribution with zero mean and a finite variance. Suppose $Z_1,\ldots,Z_n\in\real^{p_1 \times p_2}$ are i.i.d. random matrices, where the entries of $Z_i$ are i.i.d. with $\calP_0$. Given a core covariance matrix $C$, we apply the aforementioned separability tests to the data $Y_1,\ldots,Y_n\in \real^{p_1\times p_2}$, where $y_i:=\text{vec}(Y_i)=C^{1/2}z_i$ for $z_i=\text{vec}(Z_i)$. Recall that, due to the Kronecker-invariance, it suffices to consider the case where the population covariance matrix itself is a core. In this section, we take $C=C_{2,w}$ for $C_{2,w}$ defined in Section \ref{sec5.2:emp.power}. Note that the empirical power of each test is computed using the procedure described in Section \ref{sec5.2:emp.power}.\\

 For the distribution $\calP_0$, we consider $\text{Gamma}(4,2)-2$, $t_{6}$ and $t_{10}$, where $t_\nu$ denotes the $t-$distribution with degree of freedom $\nu$. Note that $\text{Gamma}(4,2)-2$ is sub-exponential and thus has moments of all orders, which is not the case for the $t-$ distributions. The $t-$distributions with different degrees of freedom are considered to examine whether the trend in power observed from Figures \ref{sec5.2.figure1}--\ref{sec5.2.figure4} still holds when the underlying distribution is heavy-tailed.\\

Tables \ref{secC.3.table1}--\ref{secC.3.table4} report the empirical powers of the separability tests for different values of $w$ and choices of $\calP_0$. For each choice of $w$ and $\calP_0$, the largest value of the power is bold-faced. Each table corresponds to a different value of $(p_1,p_2,n)\in\{(8,8,256),(8,16,256), (16,32,256),\allowbreak (32,32,256)\}$, representing the cases where $p/n\in\set{1/4,1/2,2,4}$, respectively. \texttt{LRT} is omitted when $p/n>1$ as its test statistic is not well-defined in this case. While the trends from Figures \ref{sec5.2.figure1}--\ref{sec5.2.figure4} are observed similarly, note that the power of each test is similar for $\text{Gamma}(4,2)-2$ and $t_{10}$, given a value of $w$ and $(p_1,p_2,n)$. On the other hand, the powers under $t_6$ are generally higher than those under the other two distributions for each test. Furthermore, under the distribution $t_6$, the difference in power between $\phi_2$ and $\phi_3$ becomes more evident, particularly for small values of $w$ as $p/n$ increases.       

 \begin{table}[!ht]
    \centering \scriptsize 
    \begin{tabular}{cccccccccccc} \hline 
       \multirowcell{2}{$\calP_0$}  &  &\multicolumn{10}{c}{$w$} \\ \cmidrule{3-12} 
         & & $0.1$ &  $0.2$ & $0.3$ & $0.4$ & $0.5$ & $0.6$ & $0.7$ & $0.8$ & $0.9$ & $1.0$ \\ \hline 
               \multirowcell{5}{$\text{Gamma}(4,2)-2$}    & $\phi_1$ & $0.046$& $0.051$ & $0.068$ & $0.078$  & $0.140$ & $0.197$ & $0.300$& $0.391$  & $0.528$ & $0.664$\\
                & $\phi_2$ & $0.057$ &$\bf 0.087$ & $0.124$ & $0.216$& $0.385$ & $\bf 0.587$ & $0.810$ & $0.948$  & $\bf 0.990$ & $\bf 1.000$\\
      & $\phi_3$ &$0.057$ & $0.086$ & $\bf 0.126$& $\bf 0.228$ & $\bf 0.395$ & $0.586$ & $\bf 0.821$ & $\bf 0.952$  &$\bf 0.990$ & $\bf 1.000$\\
       & \texttt{LRT} & $0.055$& $0.078$ & $0.118$ & $0.192$ & $0.351$ & $0.547$ & $0.758$ & $0.921$  & $0.981$& $0.999$\\
         & \texttt{PTCLT} & $\bf 0.062$ &$0.058$ & $0.070$& $0.054$& $0.095$ & $0.072$ &$0.079$ & $0.088$  &$0.096$ & $0.097$\\[-1em] \\
          \multirowcell{5}{$t_6$}    & $\phi_1$ & $0.058$ & $0.074$ & $0.091$ & $0.104$  & $0.138$ & $0.209$ & $0.298$ & $0.391$  & $0.507$ & $0.656$\\
            & $\phi_2$ & $0.041$ & $0.074$ & $0.114$ &  $\bf 0.192$ & $\bf 0.334$  & $\bf 0.529$ & $\bf 0.746$ & $\bf 0.920$ &  $\bf 0.982$ & $\bf 0.999$\\
      & $\phi_3$ & $0.051$ &$0.072$ & $0.114$ & $0.190$ & $0.330$ & $0.517$ & $0.736$ & $\bf 0.920$  & $0.980$& $\bf 0.999$\\
       & \texttt{LRT}  & $0.042$ & $0.064$ & $0.112$ &  $0.182$ &  $0.315$ & $0.494$ &  $0.695$ &  $0.897$ &  $0.971$ &  $\bf 0.999$\\
         & \texttt{PTCLT} & $\bf 0.128$ &  $\bf 0.132$  & $\bf 0.130$ &  $0.130$ & $0.141$ &  $0.149$ & $0.123$ &  $0.137$ & $0.139$ &  $0.147$\\[-1em] \\
          \multirowcell{5}{$t_{10}$}    & $\phi_1$ & $0.049$ & $0.053$ & $0.073$ & $0.129$ & $0.153$ & $0.209$ & $0.309$ & $0.434$ & $0.572$ & $0.702$\\
                  & $\phi_2$ & 0.044 & 0.070 & 0.122 &\bf  0.207 &\bf  0.382 & \bf 0.550 & \bf 0.783 & \bf 0.939 & \bf 0.986 & \bf 0.998\\
      & $\phi_3$ & $0.038$ & $0.059$ & $0.118$ & $\bf 0.207$ & $0.370$ & $0.549$ & $0.781$ & $\bf 0.939$ & $0.985$ & $0.997$\\
       & \texttt{LRT} & $0.054$& $\bf 0.083$ & $\bf 0.125$ &  $0.184$ & $0.353$ & $0.534$ & $0.742$ & $0.927$ & $0.976$ & $\bf 0.998$\\
         & \texttt{PTCLT} & \bf 0.056 & 0.061 & 0.064 & 0.064 & 0.078 & 0.076 & 0.088 & 0.087 & 0.099 & 0.099\\ \hline 
    \end{tabular}
    \caption{The empirical power of $\phi_1$, $\phi_2$, $\phi_3$, \texttt{LRT}, and \texttt{PTCLT} when the population core covariance matrix is $C_{2,w}$ and $\calP_0$ is one of $\text{Gamma}(4,2)-2$, $t_6$, and $t_{10}$, across $w\in\set{0.1,\ldots,1}$ with $(p_1.p_2,n)=(8,8,256)$.}
    \label{secC.3.table1}
\end{table}

 \begin{table}[!ht]
    \centering \scriptsize 
    \begin{tabular}{cccccccccccc} \hline 
      \multirowcell{2}{$\calP_0$}   &  &\multicolumn{10}{c}{$w$} \\ \cmidrule{3-12} 
         & & $0.1$ &  $0.2$ & $0.3$ & $0.4$ & $0.5$ & $0.6$ & $0.7$ & $0.8$ & $0.9$ & $1.0$ \\ \hline 
          \multirowcell{5}{$\text{Gamma}(4,2)-2$}    & $\phi_1$  & $0.049$ &  $0.051$ & $0.063$ & $0.078$ & $0.133$ & $0.157$ & $0.225$ & $0.318$ & $0.454$ & $0.562$\\
                  & $\phi_2$ & $0.068$ & $0.066$ & $0.109$ & $0.186$ & $0.344$ &  $0.544$ &  $0.770$ & $0.929$  & $0.989$ & $\bf 0.999$\\
      & $\phi_3$ & $\bf 0.076$ & $\bf 0.074$ & $\bf 0.115$ & $\bf 0.199$ & $\bf 0.374$ & $\bf 0.573$ & $\bf 0.792$ & $\bf 0.942$ & $\bf 0.992$ & $0.998$ \\
       & \texttt{LRT} & $0.051$ & $0.065$ & $0.092$ & $0.159$ & $0.274$ &  $0.414$ & $0.629$ & $0.838$ & $0.954$ & $0.995$\\
         & \texttt{PTCLT} & $0.055$ &  $0.057$  & $0.065$ &  $0.057$ &  $0.060$ & $0.073$ & $0.066$ & $0.063$ & $0.071$  & $0.084$ \\[-1em] \\
\multirowcell{5}{$t_6$}    & $\phi_1$ & $0.103$ &  $0.141$ & $0.150$ & $0.192$ & $0.240$ & $0.281$ & $0.353$ & $0.456$ & $0.610$ & $0.714$\\
        & $\phi_2$ &  $0.204$ & $0.247$ & $0.291$ & $0.450$ & $0.630$ & $0.783$ & $0.932$ & $0.989$ & $\bf 0.997$ & $\bf 1.000$\\
      & $\phi_3$ & $\bf 0.214$ &  $\bf 0.268$ & $\bf 0.305$ & $\bf 0.460$ & $\bf 0.642$  & $\bf 0.790$ &  $\bf 0.940$ & $\bf 0.991$ & $0.995$ & $\bf 1.000$\\
       & \texttt{LRT} & $0.155$ &  $0.159$ & $0.211$ & $0.324$ & $0.480$ & $0.657$ & $0.823$ & $0.934$ & $0.982$ &  $0.999$\\
         & \texttt{PTCLT} & $0.100$ & $0.099$ & $0.118$ & $0.117$ & $0.121$ & $0.120$ & $0.132$ & $0.118$ &  $0.126$&  $0.152$\\[-1em] \\
          \multirowcell{5}{$t_{10}$}    & $\phi_1$ & 0.049 & 0.057 & 0.073 & 0.093 & 0.122 & 0.189 & 0.265 & 0.365 & 0.453 & 0.615\\
                  & $\phi_2$ & 0.054 & 0.076  & 0.115 & 0.180 & 0.336 & 0.600 & 0.798 & 0.942 & \bf 0.991 & \bf 0.998\\
      & $\phi_3$ & 0.057 &\bf  0.081 & \bf 0.121 &\bf  0.199 & \bf 0.364 &\bf  0.623 &\bf  0.816 &\bf  0.946 & \bf 0.991 &\bf  0.998\\
       & \texttt{LRT} & 0.045 & 0.078 & 0.107 & 0.159 & 0.278 & 0.508 & 0.690 & 0.881 & 0.960 & 0.998\\
         & \texttt{PTCLT} &
        \bf  0.060 &  0.061 & 0.065 & 0.059 & 0.074 & 0.070 & 0.078 & 0.074&  0.088 & 0.082\\ \hline 
    \end{tabular}
    \caption{The empirical power of $\phi_1$, $\phi_2$, $\phi_3$, \texttt{LRT}, and \texttt{PTCLT} when the population core covariance matrix is $C_{2,w}$ and $\calP_0$ is one of $\text{Gamma}(4,2)-2$, $t_6$, and $t_{10}$, across $w\in\set{0.1,\ldots,1}$ with $(p_1.p_2,n)=(8,16,256)$.}
    \label{secC.3.table2}
\end{table}

 \begin{table}[!ht]
    \centering \scriptsize 
    \begin{tabular}{cccccccccccc} \hline 
      \multirowcell{2}{$\calP_0$}   &  &\multicolumn{10}{c}{$w$} \\ \cmidrule{3-12} 
         & & $0.1$ &  $0.2$ & $0.3$ & $0.4$ & $0.5$ & $0.6$ & $0.7$ & $0.8$ & $0.9$ & $1.0$ \\ \hline 
               \multirowcell{4}{$\text{Gamma}(4,2)-2$}    & $\phi_1$ & $0.037$ & $0.049$ & $0.062$ & $0.078$ & $0.092$ & $0.132$ & $0.173$ & $0.263$ &  $0.317$ & $0.429$\\
                & $\phi_2$ & $\bf 0.051$ & $0.066$ & $0.103$ & $0.183$ & $0.296$ & $0.496$ & $0.733$ & $0.921$ & $0.985$ & $\bf 1.000$\\
      & $\phi_3$ & $0.049$ & $\bf 0.072$ & $\bf 0.125$ & $\bf 0.227$ & $\bf 0.335$ & $\bf 0.572$ & $\bf 0.801$ & $\bf 0.944$ & $\bf 0.992$ & $\bf 1.000$\\
         & \texttt{PTCLT} & $0.044$ & $0.063$ & $0.063$ & $0.059$ & $0.055$ & $0.046$ & $0.063$ & $0.053$ & $0.072$ & $0.058$\\[-1em] \\
                   \multirowcell{4}{$t_6$}    & $\phi_1$ & $0.102$ & $0.103$ & $0.146$ & $0.162$ & $0.180$ & $0.249$ & $0.299$ & $0.389$ & $0.505$ & $0.622$\\
                          & $\phi_2$ & $0.206$ & $0.239$ & $0.330$ & $0.492$ & $0.636$ & $0.824$ & $0.923$ & $0.989$ & $0.995$ & $\bf 1.000$\\
      & $\phi_3$  & $\bf 0.245$ & $\bf 0.300$ & $\bf 0.387$ & $\bf 0.559$ & $\bf 0.671$ & $\bf 0.865$ & $\bf 0.955$ & $\bf 0.996$ & $\bf 0.999$ & $\bf 1.000$\\
         & \texttt{PTCLT} & $0.104$ & $0.095$ & $0.102$ & $0.101$ & $0.089$ & $0.102$ & $0.117$ & $0.114$ & $0.101$ & $0.100$\\[-1em] \\
          \multirowcell{4}{$t_{10}$}    & $\phi_1$ & 0.023 & 0.019 & 0.033 & 0.041 & 0.043 & 0.072 & 0.117 & 0.153 & 0.210 & 0.314\\
                & $\phi_2$ & 0.011 & 0.014 & 0.030 & 0.050 &\bf  0.145 & 0.237 & 0.494 & 0.768 & 0.936 & \bf 0.994\\
      & $\phi_3$ & 0.011 & 0.017 & 0.025 & 0.047 & 0.141 & \bf 0.243 &\bf  0.515 & \bf 0.788 &\bf  0.951 & \bf 0.994\\
         & \texttt{PTCLT} &\bf  0.063 & \bf 0.050 & \bf 0.055 &  \bf 0.059 & 0.063 & 0.063 & 0.059 & 0.059 & 0.055 & 0.059\\ \hline 
    \end{tabular}
    \caption{The empirical power of $\phi_1$, $\phi_2$, $\phi_3$, \texttt{LRT}, and \texttt{PTCLT} when the population core covariance matrix is $C_{2,w}$ and $\calP_0$ is one of $\text{Gamma}(4,2)-2$, $t_6$, and $t_{10}$, across $w\in\set{0.1,\ldots,1}$ with $(p_1.p_2,n)=(16,32,256)$.}
    \label{secC.3.table3}
\end{table}

 \begin{table}[!ht]
    \centering \scriptsize 
    \begin{tabular}{cccccccccccc} \hline 
       \multirowcell{2}{$\calP_0$}  &  &\multicolumn{10}{c}{$w$} \\ \cmidrule{3-12} 
         & & $0.1$ &  $0.2$ & $0.3$ & $0.4$ & $0.5$ & $0.6$ & $0.7$ & $0.8$ & $0.9$ & $1.0$ \\ \hline 
               \multirowcell{4}{$\text{Gamma}(4,2)-2$}    & $\phi_1$ & $0.030$ & $0.033$ & $0.049$ &  $0.052$ & $0.057$ & $0.108$ & $0.140$ & $0.212$ & $0.282$ & $0.366$\\
                      & $\phi_2$ &  $0.057$ & $0.074$ & $0.139$ & $0.173$ & $0.354$ & $0.560$ & $0.803$ & $0.932$ & $0.993$ & $\bf 1.000$\\
      & $\phi_3$ & $0.050$ &$\bf 0.078$ & $\bf 0.149$ & $\bf 0.186$ & $\bf 0.376$ & $\bf 0.580$ & $\bf 0.839$ & $\bf 0.952$ & $\bf 0.996$ & $\bf 1.000$\\
         & \texttt{PTCLT} & $\bf 0.070$ & $0.040$ & $0.071$ & $0.053$ & $0.056$ & $0.054$ & $0.059$ & $0.065$ & $0.059$ & $0.056$\\[-1em] \\
          \multirowcell{4}{$t_6$}    & $\phi_1$ & $0.100$ & $0.089$ & $0.109$ & $0.120$ & $0.134$ & $0.189$ & $0.229$ & $0.301$ & $0.402$ & $0.479$\\
                  & $\phi_2$ & $0.244$ & $0.282$ & $0.376$ & $0.502$ & $0.680$ & $0.835$ & $0.932$ & $0.991$ & $0.997$ & $\bf 1.000$\\
      & $\phi_3$ & $\bf 0.287$ & $\bf 0.320$ & $\bf 0.412$ & $\bf 0.559$ & $\bf 0.729$ & $\bf 0.872$ & $\bf 0.944$ & $\bf 0.996$ & $\bf 0.999$ & $\bf 1.000$\\
         & \texttt{PTCLT} & $0.093$ & $0.080$ & $0.085$ & $0.086$ & $0.089$ & $0.099$ & $0.076$ & $0.110$ & $0.104$ & $0.095$\\[-1em] \\
          \multirowcell{4}{$t_{10}$}    & $\phi_1$ & 0.038 & 0.048 & 0.071 & 0.074 & 0.073 & 0.130 & 0.179 & 0.222 & 0.313 & 0.399\\
                  & $\phi_2$ & 0.048 & \bf 0.079 & \bf 0.124 &\bf  0.181 & 0.353 & 0.515 & 0.782 & 0.913 & 0.984 & \bf 1.000\\
      & $\phi_3$ & 0.045 & 0.075 & 0.106 & 0.178 &\bf  0.366 & \bf 0.525 & \bf 0.801 & \bf 0.930 &\bf 0.991 & 0.999\\
         & \texttt{PTCLT} & \bf 0.067 & 0.071 & 0.072 & 0.064 & 0.058 & 0.063 &  0.060 & 0.062 & 0.063 & 0.046\\ \hline 
    \end{tabular}
    \caption{The empirical power of $\phi_1$, $\phi_2$, $\phi_3$, \texttt{LRT}, and \texttt{PTCLT} when the population core covariance matrix is $C_{2,w}$ and $\calP_0$ is one of $\text{Gamma}(4,2)-2$, $t_6$, and $t_{10}$, across $w\in\set{0.1,\ldots,1}$ with $(p_1.p_2,n)=(32,32,256)$.}
    \label{secC.3.table4}
\end{table}

\subsection{Empirical Power of Non-Zero Mean Gaussian Populations}\label{secC.4}

We evaluate the empirical power of $\phi_1,\phi_2,\phi_3$, \texttt{LRT}, and \texttt{PTCLT} under non-zero mean Gaussian populations. We replicate the simulation studies for Figures \ref{sec5.2.figure3}--\ref{sec5.2.figure4} but with centering. Recall that the test statistic of each test is based on the sample core $\hat{C}=c(S)$ for the sample covariance matrix $S=1/n\sum_{i=1}^n y_iy_i^\top$, except for \texttt{PTCLT}. Here $y_i=\text{vec}(Y_i)$ for random matrices $Y_i\in \real^{p_1\times p_2}$. The sample covariance matrix was computed as above because we assumed $\bbE[Y_1]=0$. Now we assume the unknown mean and thus replace $S$ with its centered version as $S=1/n\sum_{i=1}^n (y_i-\bar{y})(y_i-\bar{y})^\top$ for $\bar{y}=1/n\sum_{i=1}^n y_i$. Then we do the same simulation studies for Figures \ref{sec5.2.figure3}--\ref{sec5.2.figure4}. We refer the reader to Section \ref{sec5.2:emp.power} for details. From Figures \ref{secC.4.figure1}--\ref{secC.4.figure2}, one can see that the tendency observed from Figures \ref{sec5.2.figure3}--\ref{sec5.2.figure4} does not alter after centering. 

\begin{figure}[!ht]
    \centering
    \includegraphics{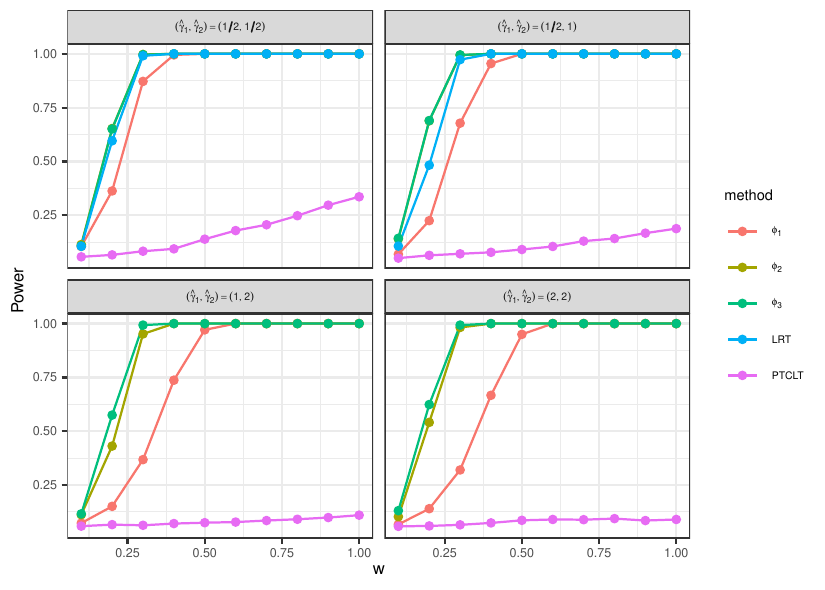}
    \caption{Empirical power of the separability tests under $N_{p_1\times p_2}(0,C_{1,w})$ as functions of $w$ across $(\hat{\gamma}_1,\hat{\gamma}_2)$. Here $n=256$ and $(p_1,p_2)$ is determined according to the value of $(\hat{\gamma}_1,\hat{\gamma}_2)$ and $C_{1,w}$ is defined in Section \ref{sec5.2:emp.power}. The test statistics of  $\phi_1,\phi_2,\phi_3$ and \texttt{LRT} are computed based on the core of the centered sample covariance matrix.}
    \label{secC.4.figure1}
\end{figure}

\begin{figure}[!ht]
    \centering
    \includegraphics{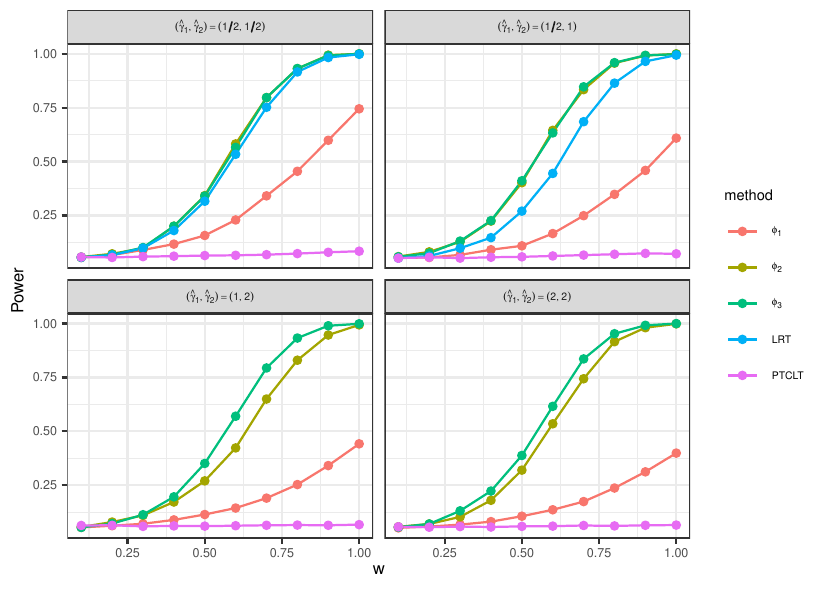}
    \caption{Empirical power of the separability tests under $N_{p_1\times p_2}(0,C_{2,w})$ as functions of $w$ across $(\hat{\gamma}_1,\hat{\gamma}_2)$. Here $n=256$ and $(p_1,p_2)$ is determined according to the value of $(\hat{\gamma}_1,\hat{\gamma}_2)$ and $C_{2,w}$ is defined in Section \ref{sec5.2:emp.power}. The test statistics of  $\phi_1,\phi_2,\phi_3$ and \texttt{LRT} are computed based on the core of the centered sample covariance matrix.}
    \label{secC.4.figure2}
\end{figure}

\section{Critical Values for $\phi_1,\phi_2,$ and $\phi_3$}\label{sec.D:cutoff}

We provide the approximated $q_{1-\alpha}$ values, denoted by $\hat{q}_{1-\alpha}$, for $\phi_1,\phi_2$, and $\phi_3$ at level $\alpha=0.05$ under Gaussian populations. These values are based on $1000$ Monte Carlo simulations for various values of $(p_1,p_2,n)$. To account for the uncertainty in the sampling distribution, we apply transformations to the test statistics $T_1$, $T_2$, and $T_3$ as follows: 
\begin{align*}
    \tilde{T}_1(Y)&=\gamma_0 n^{2/3}\parentheses{T_1(Y)-E_+},\\ 
    \tilde{T}_2(Y)&=T_2(Y)+pF^{\gamma_n}\parentheses{\log(x+1)} -p\log p,\\
    \tilde{T}_3(Y)&=nT_3(Y)-p-1.
\end{align*}
Here $\gamma_0$ and $E_+$ are those in Section \ref{sec4.3:asymp.null}. Also, $\gamma_n=p/n$ and for $a_1(y)=(y+2-\sqrt{y^2+4})/(2\sqrt{y})$, 
\begin{align*}
    F^{y}\parentheses{\log(x+1)}&=-\frac{1}{2}\left[\frac{2a_1(y)}{\sqrt{y}}-\frac{y+1}{y}\log\parentheses{\frac{\sqrt{y}}{a_1(y)}}\right]\\
    &-\frac{1}{2}\left[\frac{1-y}{y}\log(1-a_1(y)\sqrt{y})+\frac{1-y}{y}\log\parentheses{\frac{\sqrt{y}}{a_1(y)}-y}\right]\mathbf{1}_{y\in(0,1)}\\
    &-\frac{1}{2}\left[\frac{y-1}{y}\log\parentheses{1-\frac{a_1(y)}{\sqrt{y}}}+\frac{y-1}{y}\log\parentheses{\frac{\sqrt{y}}{a_1(y)}-1}\right]\mathbf{1}_{y\in[1,\infty)}.
\end{align*}
Recall that the asymptotic null distribution of a version of $\tilde{T}_1(Y)$ was studied in Section  \ref{sec4.3:asymp.null}. The transformation of $T_2$ as in $\tilde{T}_2(Y)$ follows from Theorem $1$--$2$ of \cite{wang2021}. Also, that of $T_3$ as in $\tilde{T}_3(Y)$ follows the transformation of the test statistic of John's sphericity test, admitting the closed form of the asymptotic distribution. The values of $\hat{q}_{1-\alpha}$ for $\tilde{T}_1$, $\tilde{T}_2$, and $\tilde{T}_3$, instead of $T_1,T_2$, and $T_3$, respectively, are reported in Table \ref{secE.table1}.  
\begin{table}[!ht]
    \centering
    \begin{tabular}{cccc}
    \hline 
    $(p_1,p_2,n)$ & $\tilde{T}_1$ & $\tilde{T}_2$ & $\tilde{T}_3$ \\ \hline 
     $(6,6,144)$    & $-0.130$ & $0.060$ & $0.726$\\ 
      $(8,8,256)$ & $-0.075$ & $0.064$ & $1.017$\\
        $(10,10,400)$ & $0.007$ & $0.060$ & $0.810$ \\
        $(12,12,576)$ & $0.093$ & $0.056$ & $1.000$ \\
         $(14,14,784)$ & $0.338$ & $0.062$ & $1.187$ \\
          $(16,16,1024)$ & $0.307$ & $0.067$ & $1.246$\\
           $(6,12,144)$    & $-0.262$ & $0.083$  & $0.603$ \\ 
      $(8,16,256)$ & $0.171$ & $0.089$ & $0.731$\\
        $(10,20,400)$ & $0.154$ & $0.082$ & $0.623$ \\
        $(12,24,576)$ & $0.279$ & $0.091$ & $0.595$ \\
         $(14,28,784)$ & $0.215$ & $0.094$ & $0.624$ \\
         $(16,32,1024)$ & $0.332$ & $0.093$ & $0.728$ \\
           $(12,24,144)$    & $-0.02$ & $0.175$ & $0.416$ \\ 
      $(16,32,256)$ & $0.221$ & $0.167$ & $0.594$ \\
        $(20,40,400)$ & $0.382$ & $0.173$ & $0.780$ \\
        $(24,48,576)$ & $0.349$ & $0.163$ & $0.697$ \\
         $(28,56,784)$ & $0.347$ & $0.183$ & $0.730$ \\
          $(32,64,1024)$ & $0.598$ & $0.185$ & $0.780$ \\
           $(24,24,144)$    & $0.028$ & $0.180$ & $1.033$ \\ 
      $(32,32,256)$ & $0.414$ & $0.182$ & $1.006$ \\
        $(40,40,400)$ & $0.339$ & $0.183$ & $1.256$ \\
        $(48,48,576)$ & $0.690$ & $0.174$ & $1.151$ \\
         $(56,56,784)$ & $0.671$ & $0.185$ & $1.169$ \\
          $(64,64,1024)$ & $0.631$ & $0.172$ & $1.341$ \\ \hline 
    \end{tabular}
    \caption{ The values of $\hat{q}_{1-\alpha}$ for $\tilde{T}_1$, $\tilde{T}_2$, and $\tilde{T}_3$ based on $1000$ Monte Carlo simulations at level $\alpha=0.05$.}
    \label{secE.table1}
\end{table}

\end{document}